

\documentclass{daj}

\usepackage{amsmath}
\usepackage{amsthm, amssymb, amsfonts}

\def\eps{\varepsilon}
\def\p{\partial}	
\def\HH{\mathbb{H}}
\def\E{\mathbb{E}}
\def\C{\mathbb{C}}
\def\R{\mathbb{R}}
\def\Z{\mathbb{Z}}

\def\PP{\mathbb{P}}
\def\l{\lambda}

\def\s{\sigma}
\def\t{\theta}

\def\g{\gamma}
\def\z{\zeta}	
\def\zbar{\bar{z}}

\def\Var{\mathrm{Var}}

\def\vp{\varphi}
\def\im{\mathrm{Im}}
\def\re{\mathrm{Re}}
\renewcommand{\Re}{\re}
\renewcommand{\Im}{\im}
\renewcommand{\leq}{\leqslant}
\renewcommand{\geq}{\geqslant}

\renewcommand{\to}{\rightarrow}
\renewcommand{\P}{\mathbb{P}}
\renewcommand{\arg}{\mathrm{arg}}

	\newtheorem{theorem}{Theorem}
	\newtheorem{lemma}[theorem]{Lemma}
	
	\newtheorem*{claim*}{Claim}
	\newtheorem{corollary}[theorem]{Corollary}
	
	\newtheorem{fact}[theorem]{Fact}
	\newtheorem{obs}[theorem]{Observation}

	\theoremstyle{definition}

	\newtheorem{conjecture}[theorem]{Conjecture}

	\theoremstyle{remark}

\dajAUTHORdetails{%
  title = {Anti-concentration of random variables from zero-free regions}, 
  author = {Marcus Michelen and Julian Sahasrabudhe},
  plaintextauthor = {Marcus Michelen, Julian Sahasrabudhe},}
    %

    %
    %
   %

\dajEDITORdetails{%
   year={2022},
   number={13},
   received={2 September 2021},   
   published={7 October 2022},  
   doi={10.19086/da.38590},       
}   

\begin{document}

\begin{frontmatter}[classification=text]


\author[marcus]{Marcus Michelen\thanks{Supported in part by NSF grant DMS-2137623.}}
\author[julian]{Julian Sahasrabudhe}

\begin{abstract}
This paper provides a connection between the concentration of a random variable and the distribution of the roots of its probability generating function.
Let $X$ be a random variable taking values in $\{0,\ldots,n\}$ with $\P(X = 0)\P(X = n) > 0$ and with probability generating function $f_X$. We show that if all of the zeros $\z$ of $f_X$ satisfy $|\arg(\z)| \geq \delta$ and $R^{-1} \leq |\z| \leq R$ then
\[ \Var(X) \geq c  R^{-2\pi/\delta}n,
\] where $c > 0$ is a absolute constant.  We show that this result is sharp, up to the factor $2$ in the exponent of $R$. 
As a consequence, we are able to deduce a Littlewood--Offord type theorem for random variables that are not necessarily sums of i.i.d.\ random variables.
\end{abstract}
\end{frontmatter}


\section{Introduction}

While there are many tools in probability theory for showing that a random variable is concentrated, there are few for proving \emph{anti-concentration} in a general setting.  One family of results in this direction is Littlewood--Offord theory \cite{erdos-LwO,halasz,LittlewoodOfford,stanley,rogozin}, also known as small ball probability, which is a set of tools for obtaining upper bounds on the probability that a random sum is in a ``small'' set.  This line of work has led to \emph{inverse Littlewood--Offord theorems} \cite{nguyen-vu,tao-vu,rudelson2008littlewood} which often present a useful dichotomy: either a certain random variable exhibits this anti-concentration or a special structure is present.  This approach has been extended to low degree polynomials \cite{meka2016anti,kwan2019algebraic} although sharp results remain elusive in these non-linear cases. 

Another route towards anti-concentration is by a coupling approach: when the variable of interest is a function of a random environment, one can often couple two instances of the environment so that one instance of the variable is larger than the other.  This approach was taken by Wehr and Aizenman \cite{wehr-aizenman} to yield lower bounds on certain variances in the context of the Ising model (and other related models) and is also taken up in other ad-hoc approaches to proving lower bounds on fluctuations~\cite{bollobas-janson,gong-houdre-lember,houdre-ma,houdre-matzinger,janson,lember-matzinger,rhee} which culminated in a recent unifying work of Chatterjee \cite{chatterjee}. 

For lower bounds specifically on the variance, there are also a few other tools available; the Cram\'er--Rao inequality  \cite{cramer,rao} and a related approach by Cacoullos \cite{cacoullos} provide variance lower bounds for functions of i.i.d.\ random variables in terms of Fisher information. 

While these approaches are powerful, they all depend deeply on interpreting the random variable of interest as a function of a random environment, typically a family of i.i.d.\ variables, and thus do not apply to variables without such an interpretation. 

In this paper, we prove anti-concentration estimates for a random variable based solely on the location of the roots of its probability generating function. 
For a random variable $X \in \{0,\ldots,n\}$, we let 
$$ f_X(z) := \sum_{k} \PP(X = k)z^k $$
be its probability generating function. We shall write $p_i = \PP( X = i)$, when $X$ is clear from context.

Our original motivation derives from a conjecture of Pemantle (see \cite{clt1})
and a related conjecture of Ghosh, Liggett and Pemantle \cite{GLP} on random variables with real stable probability generating functions. Pemantle conjectured that random variables 
$X \in \{0,\ldots,n\}$ for which $f_X$ has no roots in a sector $\{ z : |\arg(z) | < \delta \}$ are approximately normal, provided $\s(X) \gg 1$. 
We formulated the following natural conjecture which implies, when applied with ideas from \cite{LPRS,clt1}, an important subcase of Pemantle's conjecture, when the roots are bounded away from $0$ and $\infty$.

\begin{conjecture}\label{conj:lin-var}
	Let $X \in \{0,\ldots,n\}$ be a random variable with $p_0 p_n > 0$ and probability generating function $f_X$
	and let $\delta > 0 $, $R \geq 1$. If the zeros $\z$ of $f_X$ satisfy $ |\arg(\z)| \geq \delta$ and $R^{-1} \leq |\z| \leq R$ then 
	\[ \Var(X) = \Omega_{R,\delta}( n ). \]
\end{conjecture}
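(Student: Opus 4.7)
The plan is to turn the zero-free hypothesis into complex-analytic control of $f_X$ near $z=1$, and to convert this into a lower bound on $\Var(X)$ via its characteristic function.

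First, I would express the variance directly in terms of the roots $\z_1, \ldots, \z_n$ of $f_X$. From the factorization $f_X(z) = p_n \prod_j (z-\z_j)$ together with the standard identities $\E[X] = f_X'(1)$ and $\E[X(X-1)] = f_X''(1)$, a short manipulation of logarithmic derivatives yields
\[
\Var(X) \;=\; \sum_{j=1}^{n} \frac{-\z_j}{(1-\z_j)^2},
\]
which is automatically real (roots come in conjugate pairs) and nonnegative. Equivalently, setting $\varphi(\theta) = f_X(e^{i\theta})$, one has $\Var(X) = -(\log\varphi)''(0)$, so the goal is to lower bound the second derivative at the origin of a specific holomorphic function.

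The heart of the argument is to use the zero-free condition to control $\log f_X$ throughout the annular sector
\[
S \;=\; \{\, z : R^{-1} < |z| < R,\ |\arg z| < \delta \,\},
\]
on which $\log f_X$ is single-valued and holomorphic. Under the conformal map $w = \log z$, the domain $S$ becomes the rectangle $Q = \{u+iv : |u| < \log R,\ |v| < \delta\}$, on which $F(w) := \log f_X(e^w)$ is holomorphic with $F(0)=0$. On the horizontal boundary $|v| = \delta$, the estimate $|f_X(z)| \leq f_X(|z|) \leq \max(1,|z|^n)$ (from the nonnegativity of the coefficients) bounds $\Re F$ from above, while on the vertical boundary $|u| = \log R$, the assumption $p_0 p_n > 0$ together with the degree-$n$ normalization gives matching control on the boundary behavior of $F$. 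A Hadamard three-lines or harmonic measure argument on $Q$ then controls $F$, and classical Cauchy estimates on derivatives of bounded holomorphic functions transfer this bound to derivatives of $F$ at $w=0$, producing the desired lower bound on $-(\log\varphi)''(0) = \Var(X)$. The exponent $R^{-2\pi/\delta}$ asserted in the abstract's theorem arises from the aspect ratio $\log R : \delta$ of $Q$ after a secondary conformal map straightening $Q$ to a standard strip.

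The main obstacle I anticipate is obtaining the correct \emph{linear-in-}$n$ scaling. The complex-analytic estimates naturally produce bounds in terms of the boundary values of $F$ at points like $z = R$ and $z = R^{-1}$, which scale like $n\log R$ and $\log p_0$ respectively since $f_X$ has degree $n$ and satisfies $p_0 p_n > 0$. Carefully tracking these factors of $n$ through the conformal distortion is where the linear scaling in $n$ enters. A secondary technical difficulty will be to make the conformal-map estimate sharp enough to yield the exponent $2\pi/\delta$ rather than a weaker one; this should require a precise use of the modulus of the rectangle $Q$ rather than a crude application of the three-lines theorem.
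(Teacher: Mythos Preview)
There is a genuine gap in the mechanism you propose. Hadamard three-lines, harmonic measure, and Cauchy estimates all produce \emph{upper} bounds on a holomorphic function and its derivatives from boundary data; you need a \emph{lower} bound on $F''(0)=\Var(X)$. You never explain how the inequality flips. Relatedly, your root formula $\Var(X)=\sum_j \frac{-\zeta_j}{(1-\zeta_j)^2}$ is correct, but the individual real parts can be negative (take $\zeta=R e^{i\delta}$ with $R$ large: the contribution is $\approx -R^{-1}\cos\delta$), so no termwise bound is available. The nonnegativity of the coefficients must enter in a more structural way than merely bounding $\Re F$ on the boundary of your rectangle, and nothing in your outline does that.

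The paper's route is quite different from yours. It works with $\varphi_\gamma(z)=\log|f_X(z)|-\log|f_X(e^{i\gamma}z)|$, which is harmonic in a sector and, crucially, \emph{nonnegative} there (this uses weak positivity together with a ``rotationally decreasing'' property proved from harmonicity). Two separate lower bounds are then chained. First, $\Var(X)\geq c\gamma^{-2}\varphi_\gamma(1)$: the Taylor expansion of $\varphi_\gamma(1)$ in $\gamma$ has leading term $\tfrac{1}{2}\gamma^2\Var(X)$, and controlling the higher cumulant terms is exactly the hard step you are skipping --- it requires the cumulant-decay machinery of the authors' earlier paper and is not a Cauchy estimate. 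Second, $\varphi_\gamma(1)$ is bounded below by a truncated Mellin transform via Poisson integration on the sector (the positivity of $\varphi_\gamma$ is what lets one truncate), and an explicit Mellin computation shows each root contributes $\tau^2+o(1)$; Obrechkoff's theorem then guarantees at least $n/2$ roots with $|\arg\zeta|\geq\pi/4$, which is where the factor of $n$ actually comes from --- not from the boundary growth $F(\log R)\sim n\log R$ that you point to. The exponent $R^{-2\pi/\delta}$ arises from the Poisson kernel of the sector, which is morally the conformal-map heuristic you had in mind, but it enters as a multiplicative loss in the Poisson lower bound rather than through a three-lines interpolation.
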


While we ultimately resolved the conjecture of Pemantle by different means (see \cite{clt2}), Conjecture~\ref{conj:lin-var} remains of independent interest
and motivates our work here.
What is perhaps surprising about this conjecture is that it says that random variables $X$ of this type have variance that is 
essentially as large as possible, as it is not hard to see that $\Var(X) = O_{R,\delta(n)}$. Indeed, under the assumptions of Conjecture \ref{conj:lin-var}, 
we have
$$\Var(X) = \frac{d^2}{d t^2}\log f_X(e^{\theta})\, \bigg|_{t = 0} = \sum_{j = 1}^n \frac{d^2}{d t^2}\log |e^t - \zeta_j |\, \bigg|_{t = 0} = O_{R,\delta}(n)\,, $$
where $\z_1,\ldots,\z_n$ are the roots of $f$. Thus, Conjecture~\ref{conj:lin-var} implies that the variance of such random variables is \emph{determined} up to constant factors.

In this paper we not only prove Conjecture~\ref{conj:lin-var}, but supply a near-optimal constant. In other words, we give a near-optimal lower bound for the variance of $X$ based on the smallest argument of a root and the smallest annulus that contains the zeros of $f_X$. 

\begin{theorem}\label{thm:Var-lower-bound} Let $X \in \{0,\ldots,n\}$ be a random variable with $p_0 p_n > 0$ and probability generating function $f_X$
	and let $\delta > 0 $, $R \geq 1$. If the zeros $\z$ of $f_X$ satisfy $ |\arg(\z)| \geq \delta$ and $R^{-1} \leq |\z| \leq R$ then
	\[ \Var(X) \geq  c R^{-2\pi/\delta} n ,\]
	where $c>0$ is an absolute constant.  
\end{theorem}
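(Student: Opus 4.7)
I would start from the identity $\sigma^2 = \Var(X) = g''(0)$, where $g(t) := \log f_X(e^t)$ is the cumulant generating function. The zero-free sector hypothesis $|\arg\zeta| \geq \delta$ is precisely what is needed to ensure that $g$ is analytic on the horizontal strip $\Sigma := \{t \in \mathbb{C} : |\Im t| < \delta\}$, and the annulus bound $|\zeta| \in [R^{-1}, R]$ additionally gives $g$ analytic on $\{|\Re t| > \log R\}$. My strategy would be to transport $g$ conformally to the unit disk and extract a lower bound on $g''(0)$ from boundary information, exploiting the specific structure of $f_X$ as a probability generating function.

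The natural conformal isomorphism $\phi : D \to \Sigma$ is the odd map $\phi(w) := (2\delta/\pi)\log\frac{1+w}{1-w}$, which sends $0 \mapsto 0$ with $\phi'(0) = 4\delta/\pi$ and $\phi''(0) = 0$. Setting $G(w) := g(\phi(w))$ gives an analytic function on $D$ with $G(0) = 0$, real-valued on the real diameter, and $G''(0) = (4\delta/\pi)^2 \sigma^2$. The crucial geometric input is that the circles $|z| = R^{\pm 1}$ pull back under $\phi^{-1}$ to circular arcs in $D$ lying at distance $\asymp R^{-\pi/(2\delta)}$ from the boundary points $w = \pm 1$. This sets the relevant length scale in $D$, and the exponent $2\pi/\delta$ in the statement arises because the second Taylor coefficient is sensitive to the \emph{square} of this distance.

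For the main technical step I would analyze $G$ on a circle $|w| = r$ with $r = 1 - c' R^{-\pi/(2\delta)}$, and apply Cauchy's formula
\[
\tfrac12 G''(0) \;=\; \frac{1}{2\pi r^2}\int_0^{2\pi} G(re^{i\psi})\, e^{-2i\psi}\, d\psi.
\]
The integrand can be controlled using the asymmetric behavior of $\Re G = \log |f_X(e^{\phi(w)})|$: on the real diameter one has $\phi(\pm r) \approx \pm\log R$, and since all zeros are trapped in the annulus $|\zeta| \leq R$, the estimate $|f_X(z)| \asymp p_n |z|^n$ for $|z| \gg R$ forces $\Re G(\pm r) \sim n\log R$; whereas on the imaginary diameter, $e^{\phi(ir)}$ lies on the unit circle so that $|f_X| \leq 1$ forces $\Re G \leq 0$. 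The contrast between these behaviors generates the desired lower bound on the second Fourier mode of $G$, giving $G''(0) \gtrsim (\delta/\pi)^2 n R^{-2\pi/\delta}$ and hence the claimed bound on $\sigma^2$. A preliminary symmetry reduction via $X \mapsto n - X$ (which replaces $f_X(z)$ by $z^n f_X(1/z)$, swapping $R^{\pm 1}$ and $p_0 \leftrightarrow p_n$) is useful to balance contributions on the two sides of the real diameter; and a separate crude argument using $p_0 p_n > 0$ together with Chebyshev handles the degenerate case where $\mathbb{E}[X]$ lies close to $0$ or $n$.

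\textbf{Main obstacle.} The central difficulty is that standard complex-analytic tools (Cauchy's inequality, Schwarz's lemma, Borel--Carath\'eodory) yield \emph{upper} bounds on Taylor coefficients, whereas we need a \emph{lower} bound on $G''(0)$. This lower bound cannot follow from any generic estimate on $G$ and must exploit the specific probabilistic structure of $f_X$: positivity of the coefficients (so $|f_X| \leq 1$ on the unit circle, controlling $\Re G$ on part of the contour), exactness of the degree $n$ (giving the $n\log R$ growth at the real-diameter endpoints), and the zero-free sector (allowing the logarithm to be taken single-valuedly in the first place). A further subtlety is that the hypothesis permits zeros of $f_X$ to lie \emph{on} the boundary rays $\arg z = \pm \delta$, producing logarithmic singularities of $G$ on $\partial D$; these need to be seen to be integrable and to not disturb the final bound. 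The factor $2$ in the exponent (yielding $R^{-2\pi/\delta}$ rather than the conjecturally sharp $R^{-\pi/\delta}$) reflects precisely this squaring of the conformal distance that is forced on us by working with second Taylor coefficients rather than first.
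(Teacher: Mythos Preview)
Your conformal-map framework is natural and the identity $G''(0)=(4\delta/\pi)^2\Var(X)$ is correct, but the heart of your plan---extracting a \emph{lower} bound on $G''(0)$ from the Cauchy integral
\[
a_2 r^2=\frac{1}{\pi}\int_0^{2\pi}\Re G(re^{i\psi})\cos(2\psi)\,d\psi
\]
via the ``contrast'' between the real and imaginary diameters---does not go through as described. The difficulty is that you must control $\Re G$ on the \emph{entire} circle $|w|=r$, not just near $\psi\in\{0,\pi/2,\pi,3\pi/2\}$. For $\psi$ bounded away from $0,\pi$, the image $e^{\phi(re^{i\psi})}$ lies within distance $\asymp R^{-\pi/(2\delta)}$ of the boundary rays $\arg z=\pm\delta$, and the hypothesis allows all $n$ zeros of $f_X$ to sit on (or arbitrarily close to) those rays. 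Consequently $\Re G=\log|f_X|$ can be as negative as $-c\,n\,\delta^{-1}\log R$ over arcs of length $\Theta(1)$, and since $\cos(2\psi)$ changes sign there, these contributions can swamp the putative positive contribution from near $\psi=0,\pi$. Your single use of coefficient positivity---``$|f_X|\le 1$ on the unit circle''---only controls the imaginary diameter and gives nothing on the rest of the contour. There is also a secondary issue: the asymptotic $|f_X(z)|\asymp p_n|z|^n$ yields $\Re G(\pm r)=n\log R+\log p_n+O(1)$, and nothing in the hypotheses bounds $\log p_n$ from below (it can be $\ll -n\log R$), so even the ``large positive'' endpoint values are not secured.

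The paper's proof is organized precisely to manufacture a \emph{nonnegative} integrand so that truncation preserves lower bounds. Rather than working with $\log f_X$ directly, it passes to the difference $\varphi_\gamma(z)=\log|f_X(z)|-\log|f_X(e^{i\gamma}z)|$, and shows (using weak positivity $|f_X(|z|)|\ge|f_X(z)|$ together with harmonicity in $S(\delta)$) that $\varphi_\gamma\ge 0$ on a sub-sector; this is the content of the ``rotationally decreasing'' lemma and is a far stronger consequence of coefficient positivity than $|f_X|\le 1$ on $|z|=1$. A Poisson integral over the sector then expresses $\varphi_\gamma$ as an integral with nonnegative integrand and kernel, which one may truncate to $|t|\in[R^{-1},R]$ and compare to a Mellin transform whose $s\to 0$ limit is computed exactly per-root, yielding the factor $n$ via Obrechkoff's theorem. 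Finally, relating $\varphi_\gamma(1)$ back to $\Var(X)$ requires separate (and nontrivial) control of higher cumulants. None of these mechanisms is present in your outline, and I do not see how to complete the Cauchy-integral route without introducing an analogous positivity device.
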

We will in fact prove a slightly stronger lower-bound, but we postpone this more technical statement to Section~\ref{sec:proof}.

\subsection{Some corollaries of Theorem~\ref{thm:Var-lower-bound}}

In \cite{clt1,clt2} we studied the relationship between zero-free regions of probability generating functions and central limit theorems, culminating in two sharp central limit theorems\footnote{See Lebowitz, Pittel, Ruelle and Speer's work \cite{LPRS} for earlier results on the relationship between zero-free regions and central limit theorems.}.  In \cite{clt2}, we resolved Pemantle's conjecture by showing that if the generating function of a random variable $X$ has no roots with argument less than $\delta$, then $X$ is approximately Gaussian provided $\Var(X) \gg \delta^{-2}$.  Combining this \cite[Theorem 1.4]{clt2} with Theorem \ref{thm:Var-lower-bound} 
allows us to prove a quantitative central limit theorem for $X$.

\begin{corollary}\label{cor:Gauss-flux}
	Let $X \in \{0,\ldots,n\}$ be a random variable with $p_0p_n> 0$, mean $\mu$, variance $\s^2$ and probability generating 
	function $f_X$. Also let $\delta >0$, $R \geq 1$ and set $X^{\ast} := (X-\mu)\s^{-1}$.  If the zeros $\z$ of $f_X$ satisfy $ |\arg(\z)| \geq \delta$ and $R^{-1} \leq |\z| \leq R$ then
	$$\sup_{t \in \R} \left|\P\left( X^{\ast} \leq t \right) - \P(Z \leq t) \right| \leq c  \delta^{-1} R^{\pi/\delta} \cdot n^{-1/2}, $$
	where $Z$ is a standard Gaussian random variable and $c > 0$ is an absolute constant.
\end{corollary}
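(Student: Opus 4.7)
The plan is to combine the two main ingredients directly: the central limit theorem of \cite[Theorem~1.4]{clt2}, which converts a zero-free sector for $f_X$ into a quantitative Gaussian approximation in Kolmogorov distance, and the variance lower bound furnished by Theorem~\ref{thm:Var-lower-bound}. The key point is that the CLT from \cite{clt2} gives a bound whose rate depends on the product of $\delta^{-1}$ and $\sigma^{-1}$, while Theorem~\ref{thm:Var-lower-bound} controls $\sigma^{-1}$ in terms of $R$, $\delta$ and $n$.

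First I would invoke \cite[Theorem~1.4]{clt2}: since the zeros of $f_X$ satisfy $|\arg(\zeta)| \geq \delta$, that theorem yields an estimate of the form
\[
\sup_{t \in \R}\bigl|\P(X^\ast \leq t) - \P(Z \leq t)\bigr| \leq \frac{c_0}{\delta\, \s}\,,
\]
for an absolute constant $c_0 > 0$, provided $\s \gg \delta^{-1}$ (the regime in which the statement is non-trivial; otherwise the claimed bound exceeds $1$ and there is nothing to prove). Second, I would apply Theorem~\ref{thm:Var-lower-bound}, which under the stated hypotheses gives $\s^2 \geq c R^{-2\pi/\delta} n$, and hence
\[
\frac{1}{\s} \leq c^{-1/2}\, R^{\pi/\delta}\, n^{-1/2}.
\]
Substituting this into the CLT estimate produces
\[
\sup_{t \in \R}\bigl|\P(X^\ast \leq t) - \P(Z \leq t)\bigr| \leq c_0 c^{-1/2}\, \delta^{-1} R^{\pi/\delta}\, n^{-1/2},
\]
which is the claimed bound after absorbing the constants.

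Since both ingredients are already in place, there is no real obstacle: the argument is essentially a one-line composition. The only subtlety worth double-checking is the precise form of the rate in \cite[Theorem~1.4]{clt2}, namely that the $\delta$-dependence there is exactly the $\delta^{-1}$ factor claimed, and that the hypothesis $p_0 p_n > 0$ together with $R^{-1} \leq |\zeta| \leq R$ suffices to apply it; both are verified in \cite{clt2}. In the degenerate regime where $\s$ is too small for the CLT bound to be informative, the conclusion is trivial because the right-hand side is already $\Omega(1)$.
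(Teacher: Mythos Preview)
Your proposal is correct and matches the paper's approach exactly: the corollary is obtained by combining \cite[Theorem~1.4]{clt2}, which gives the Kolmogorov-distance bound $O((\delta\sigma)^{-1})$, with Theorem~\ref{thm:Var-lower-bound}, which supplies $\sigma^{-1} \leq c^{-1/2} R^{\pi/\delta} n^{-1/2}$. The paper states this combination in the paragraph preceding the corollary and does not give any further detail.
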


Thus, while Theorem \ref{thm:Var-lower-bound} shows a lower bound on the variance of $X$, Corollary~\ref{cor:Gauss-flux} allows us to deduce that $X$ also has \emph{fluctuations} on the order of $\sqrt{\Var(X)} = \Theta_{R,\delta}(\sqrt{n})$, provided $n$ is large relative to $R^{2\pi/\delta}$.

From Corollary \ref{cor:Gauss-flux}, we obtain a Littlewood--Offord type theorem for general random variables. To understand this result in the context of Littlewood--Offord theory, we recall the classical result of Erd\H{o}s~\cite{erdos-LwO} which says that if $X$ is of the form $X = \sum_{i=1}^n \eps_iv_i$, 
where $\eps_1,\ldots,\eps_n \in \{-1,1\}$ are i.i.d. uniform and $v_1,\ldots,v_n \in \R$ are non-zero then 
\[ \max_{y}\, \PP( X = y ) = O(n^{-1/2}). \]
The following corollary of Theorem~\ref{thm:Var-lower-bound} says that a similar result holds even if $X$ is \emph{not} a sum of independent random variables: one needs only some control on the 
roots of the probability generating function of $X$.

\begin{corollary}\label{cor:LwO}
	Let $X \in \{0,\ldots,n\}$ be a random variable with $p_0 p_n > 0$ and probability generating 
	function $f_X$ and let $\delta >0$, $R\geq 1$. If the zeros $\z$ of $f_X$ satisfy $ |\arg(\z)| \geq \delta$ and $R^{-1} \leq |\z| \leq R$, then
	$$\max_y\, \P(X = y) \leq c \delta^{-1} R^{\pi/\delta} n^{-1/2} $$ 
	where $c > 0$ is an absolute constant.
\end{corollary}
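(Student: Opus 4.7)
The plan is to deduce this Littlewood--Offord type bound by combining the quantitative central limit theorem of Corollary~\ref{cor:Gauss-flux} with the variance lower bound of Theorem~\ref{thm:Var-lower-bound}. The basic idea is that if the distribution function of $X^\ast = (X-\mu)/\sigma$ is uniformly close to that of a standard Gaussian, then any atom of $X$ must be small, because the Gaussian has bounded density.

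More concretely, fix any $y \in \Z$ and write
$$ \P(X = y) = \P(X^{\ast} \leq t_1) - \P(X^{\ast} \leq t_0), $$
where $t_1 = (y - \mu)/\sigma$ and $t_0 = (y - 1 - \mu)/\sigma$. Applying Corollary~\ref{cor:Gauss-flux} to each of these cumulative probabilities, one obtains
$$ \P(X = y) \leq \P(Z \leq t_1) - \P(Z \leq t_0) + 2c\,\delta^{-1} R^{\pi/\delta} n^{-1/2}, $$
for $Z$ a standard Gaussian. Since the Gaussian density is bounded by $(2\pi)^{-1/2}$, the first difference is at most $(t_1 - t_0)(2\pi)^{-1/2} = (\sigma \sqrt{2\pi})^{-1}$.

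To finish, we use Theorem~\ref{thm:Var-lower-bound} to bound $\sigma^{-1}$: that result gives $\sigma \geq \sqrt{c}\, R^{-\pi/\delta} n^{1/2}$, hence $\sigma^{-1} \leq c^{-1/2} R^{\pi/\delta} n^{-1/2}$. Plugging this in, both terms are of the same order $R^{\pi/\delta} n^{-1/2}$ up to constants, and using $\delta^{-1} \geq 1/\pi$ to absorb the Gaussian-density contribution into the CLT error yields the claimed bound $\max_y \P(X = y) \leq C \delta^{-1} R^{\pi/\delta} n^{-1/2}$ for an absolute constant $C$.

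There is no serious obstacle here: the corollary is essentially a direct consequence of the Kolmogorov-distance bound together with the variance lower bound. The only things to be careful about are that the two applications of Corollary~\ref{cor:Gauss-flux} contribute only an additive $O(\delta^{-1} R^{\pi/\delta} n^{-1/2})$ error, and that the factor $\sigma^{-1}$ coming from the Gaussian density is itself controlled by Theorem~\ref{thm:Var-lower-bound} with the matching exponent $R^{\pi/\delta}$, so that both contributions combine cleanly into a single bound of the stated form.
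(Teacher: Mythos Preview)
Your proof is correct and follows precisely the route the paper indicates: the paper states that Corollary~\ref{cor:LwO} is obtained ``from Corollary~\ref{cor:Gauss-flux}'' and does not spell out the details. Your argument---bounding an atom by twice the Kolmogorov-distance error plus the Gaussian increment $(\sigma\sqrt{2\pi})^{-1}$, and then controlling $\sigma^{-1}$ via Theorem~\ref{thm:Var-lower-bound}---is exactly the intended deduction.
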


Again this is best possible, up to a factor of $2$ in the exponent of $R$. The reader may have anticipated a upper bound of the form $c(\Var(X))^{-1/2}$ in Corollary~\ref{cor:LwO}, in the extreme case, which is a factor of $\delta^{-1}$ off of what we have. However this additional factor \emph{is} necessary; a fact which can, for example, be seen in the example of sharpness in Section~\ref{subsec:discuss}.

\subsection{Zeros of generating functions} \label{subsec:zeros}

Perhaps surprisingly, many families of random variables, which are otherwise elusive, are known to have generating functions with zero-free regions.  A classical instance is provided in the highly influential pair of 1952 works of Lee and Yang \cite{lee-yang,yang-lee}, which showed that the roots of the partition function in the ferromagnetic Ising model lie on the unit circle. That is, in our terminology, the probability generating function for ``up spins'' in the Ising model has all of its roots on the unit circle.  Another classical example is provided by Heilmann and Leib \cite{heilmann-leib70}, who showed that a similar special property is enjoyed by the random variable $X = |M|$ where $M$ is a uniformly chosen matching in a graph $G$. In this case they showed that the  roots of the probability generating function are known to be \emph{real}.

These results on zeros have significant implications for the study of these random systems. Lee and Yang connected the theory of zero-freeness to the non-existence of a phase transition: the ferromagnetic Ising model cannot have a phase transition as the external field $h$ varies, except at $h = 0$. The work of Heilmann and Leib yields similar results for the study of ``monomer-dimer'' systems and, in fact, here tells us that there is no phase transition at all.  While there are other techniques for ruling out phase transitions for the monomer-dimer model on amenable graphs such as $\Z^d$ \cite{van-den-berg} (see \cite{lebowitz-martin,preston} for analogous results for the Ising model ), the zero-free approach remains the most robust route to proving there is no phase transition for these models on arbitrary graphs. 

From a more combinatorial perspective, Godsil \cite{godsil81}, in a classic work,
used the work of Heilmann and Leib to obtain a central limit theorem for the size of a random matching in a $d$-regular graph for fixed $d$. More recently, this
was taken much further by Kahn \cite{kahn00} who, essentially relying on the work of Heilmann and Leib, gave a nearly complete understanding of this phenomena for matching in graphs.


Recently, it has shown to be fruitful to consider multivariate (or in the case of random variables, \emph{multi-dimensional}) versions of these polynomials.
In the case of both the Ising and monomer-dimer models, \emph{multivariate} zero-free regions are now known and a general theory has developed around these results.
One particular success has been had with \emph{stable polynomials}, which emerged in probability theory with the uprising work of Borcea, Br\"anden and Liggett \cite{BBL}, who connected polynomials to a natural notion of negative dependence set out by Pemantle in his influential work on the subject \cite{pemantle2000}.

We call a random variable with a real stable generating function \emph{strong Rayleigh} and since the work of Borcea, Br\"anden and Liggett \cite{BBL} many random variables have been shown to be strong Rayleigh, such as the edges of a uniform spanning tree, independent Bernoulli random variables with conditioned sum, and more \cite{pemantle-survey}.  Our original motivation was to solve a question raised by Ghosh Liggett and Pemantle \cite{GLP} on the limiting shape of strong Rayleigh distributions. While we now know these distributions approximate multivariate gaussians \cite{clt2}, Theorem~\ref{thm:Var-lower-bound} allows us to understand the scale of this normal shape, in all directions.  More specifically, if the generating function $p(z_1,\ldots,z_d)$ of a random variable $(X_1,\ldots,X_d)$ is stable, then the random variable $m_1 X_1 + \cdots + m_d X_d$ for non-negative integers $m_j$ is zero-free in the sector $\{|\arg(z)| < \pi / \max m_j  \}$.  Theorem \ref{thm:Var-lower-bound} then shows that no positive linear combination of $(X_1,\ldots,X_d)$ can be degenerate, assuming some control over the maximum and minimum root.

Another light in which to view our results comes from the analytic theory of characteristic functions as studied by Yu. V. Linnik, Ostrovskii and others. 
We refer the reader to \cite{LinnikConj} and the book of Linnik and Ostrovskii \cite{linnikBook} and the references therein for more detail on this fascinating line of research.

%
%
\subsection{Sharpness of results}\label{subsec:discuss}

The sharpness of Theorem \ref{thm:Var-lower-bound} (and Corollaries~\ref{cor:LwO} and~\ref{cor:Gauss-flux}), up to a factor of $2$ in the exponent of $R$, is supplied by
a natural class of random variables, which we describe here. 

Given $R \geq 1$ and $\delta = \pi/k$ for some integer $k \geq 3$. We choose $p = (1 + R^k)^{-1}$ and let $X_1,\ldots,X_{ n/k }$ be independent, identically distributed Bernoulli random variables where $p = \PP(X_j = 1)$ and thus $\Var(X_j) = p(1-p)$.  Now define $X$ to be the sum 
$$X = k \sum_{j = 1}^{ n/k }X_i.$$ 
One can then see that $f_X(z) = (p z^k + (1 - p))^{ n/k}$ and thus all roots have modulus $R$ and argument $\geq \pi/k$. It is not hard to additionally show that 
\[ \Var(X) = \Theta\left( \delta^{-1} R^{-\pi/\delta}n \right),  \]
demonstrating that Theorem~\ref{thm:Var-lower-bound} is sharp up to the factor of $2$ in the exponent\footnote{Interestingly, the extra factor of $\delta^{-1}$ appears in our more detailed technical statement, Theorem~\ref{thm:Var-lower-bound-sharper}, while the exponent remains off by a factor of $2$.}  of $R$. Likewise, one can show that this example satisfies
\[ \max_{y}\, \PP( X  = y ) = \Theta\left( \delta^{-1/2}R^{\pi/2\delta} n^{-1/2} \right), \]
provided $(n/k) R^{-k} \gg 1$. Thus it remains an interesting open problem to close the gap between this example and Theorem~\ref{thm:Var-lower-bound} and Corollary~\ref{cor:LwO}.

\subsection{Outline of proof}

The proof of Theorem~\ref{thm:Var-lower-bound} is broken into three principal steps.   The first step draws on the results of our paper \cite{clt2} and is carried out in Section~\ref{sec:Var-and-phi} where we relate $\Var(X)$ to the value of
the function
$$\vp_{\g}(z) = \log| f_X(z )| - \log |f_X(ze^{i\gamma})|$$ at $z=1$.  The function  $\vp_{\g}$ has several nice properties that will be crucial for us:
we will see that if $\gamma \approx c\delta$, the function $\vp_\gamma$ is both positive and harmonic in some sector of the positive real axis and, in addition, the Taylor expansion of $\varphi_\gamma(1)$ in variable $\gamma$ has leading term $\gamma^2 \Var(X)/2$, thus providing the link with the variance. Bounding the variance in terms of $\vp_\g(1)$ then amounts to showing that higher terms in this Taylor expansion may be disregarded. However, removing these ``higher'' terms is no small matter and it is at this point that we make essential use of the tools built up in our previous paper \cite{clt2} which allow us to lower bound $\Var(X)$ in terms of $\vp_\gamma(1)$ (Lemma~\ref{lem:Var-and-phi}) by controlling the higher cumulants in terms of $\Var(X)$. Indeed, heavy use of the fact that $f_X$ has non-negative coefficients is used in this step.  After Lemma \ref{lem:Var-and-phi} is in place, our path diverges from the ideas and results in \cite{clt2}.  

The second step in the proof of Theorem~\ref{thm:Var-lower-bound} is carried out in Section~\ref{sec:connection-to-mellin} where we obtain a lower bound for $\vp_{\g}(1)$ (Lemma \ref{lem:phi-at-least-H_r}) in terms of the value of a certain ``truncated Mellin transform'' $H_{M,\tau}(s)$. While at this point this is a somewhat mysterious step, we will see later that this  Mellin transform has some useful properties in our context, that will allow us to get a good handle on it.

To relate $\vp_{\g}(1)$ to this truncated Mellin transform we (after some preparations) use the fact that $\log|f_X(z)|$ is harmonic in the sector $\arg(\zeta) \in (0,\delta)$ to write $\vp_\g(r e^{i(\delta-\gamma)})$ as an integral around the boundary of the sector against some Poisson kernel $P(t)$. We are then able to truncate this integral and do a direct comparison to the similar-looking Mellin transform. Again we make use of the non-negativity hypothesis in this step.

The final step, which is presented in Sections~\ref{sec:mellin-comp} and~\ref{sec:truncation}, is to control the value of this truncated Mellin transform $H_{M,\tau}(s)$. The key ingredient here is that in our situation of constrained roots, we have very good control over this object for small $s$. Indeed, when we take $M \to \infty$ and $s \to 0$, we have that $H_{M,\tau}$ approaches $n\tau^2$, thereby providing the factor of $n$ in Theorem \ref{thm:Var-lower-bound}.  We compute this limit by first calculating the (non truncated) Mellin transform \emph{exactly}, in the $s \rightarrow 0$ limit. We then show that we can (carefully) truncate the integral without too much loss.

The three lower bounds, $\Var(X)$ in terms of $\varphi_\gamma(1)$, $\varphi_\g(1)$ in terms of $H_{M,\tau}(s)$, and $H_{M,\tau}(s)$ in terms of $n \tau^2$, are then assembled in Section \ref{sec:proof} to prove Theorem \ref{thm:Var-lower-bound}.  

The main contribution of this paper, broadly speaking, is to understand how the constraints on the roots of $f_X$ and non-negativity of the coefficients of $f_X$ interact. Interestingly, using \emph{only} information on the zeros, is not enough, only by using the full strength of this interaction are we able to deduce our results. Indeed, one can interpret the results of this paper as developing a tool kit for understanding how the non-negativity of $f_X$ interacts with information on the location of the roots.  

\section{Basic definitions and properties} \label{sec:definitions}
In this section we introduce some of the central objects in this paper and state their basic properties. 
We refer the reader to our paper \cite{clt2} for a more careful treatment of some of the basic results mentioned in this section. 

For $z\in \C \setminus \{0\}$, we write $z = re^{i\t}$, where $r>0$ and $\theta \in [-\pi,\pi]$ and then define $\arg(z) = \theta$.
For $-\pi \leq \alpha \leq \beta \leq \pi$, we define the \emph{sector} 
\[S (\alpha,\beta) = \{ z \in C : \arg(z) \in [\alpha,\beta] \};\] 
and define $S(\delta) = S(-\delta,\delta)$. We use the notation $f(x) = O(g(x))$ to denote $|f(x)| \leq Cg(x)$, for a positive constant $C$
and we use the notation $o_{x \rightarrow 0}(1)$ to denote a quantity that tends to zero as $x \rightarrow 0$.

For a random variable $X \in  \{0,\ldots,n\}$ we let $f_X$ be its probability generating function 
and define the \emph{logarithmic potential of} $X$ to be 
\[ u_X(z) = \log|f_X(z)|. 
\] One of the reasons for the use of the logarithmic potential is immediately apparent: if $f_X$ has no zeros in an open set $\Omega$, then $u_X$ is a
harmonic function on $\Omega,$ allowing us to appeal to tools available for working with harmonic functions. 

We now note and define a few basic properties of $u_X$. If $u$ is a function defined on $S(\alpha,\beta)$,
we say that $u$ is \emph{symmetric on} $S(\alpha,\beta)$ if $u(z) = u(\bar{z})$, whenever $z,\bar{z} \in S(\alpha,\beta)$.
Of course, if $u_X$ is the logarithmic potential of a random variable, then $u_X$ is symmetric due to the fact that $f_X$ has real coefficients: indeed, write
\[ u_X(z) = \log|f_X(z)| = \log|\overline{f_X(z)}| = \log|f_X(\bar{z})| = u_X(\bar{z}). \]  
We now introduce a key notion that captures the property that $f_X$ has positive coefficients in terms of the logarithmic potential. We say that a function $u$ defined on a sector $S(\alpha,\beta)$ is \emph{weakly positive} if 
\begin{equation} \label{eq:weak-positivity} u(|z|) \geq u(z), 
\end{equation} for all $z \in S(\alpha,\beta)$.
We shall make essential use of the fact that the logarithmic potential of a random variable is weakly positive; indeed, since $f_X$ has non-negative coefficients 
we have that 
\[ |f_X(|z|)| \geq |f_X(z)|, 
\] for all $z \in \C$. Weak positivity of $u_X$ follows by taking logarithms of both sides. The notion of weak-positivity has been studied in several papers before  \cite{BE,deAngelis2,deAngelis3,deAngelis1,eremenko-fryntov,MS-strongPos}; in particular, Bergweiler and Eremenko \cite{BE} showed that weak positivity characterizes the logarithmic potentials of polynomials with non-negative coefficients up to limits.  

We now introduce\footnote{This is the special case $b = 0$ of our notion of $b$-\emph{decreasing} from our previous paper \cite{clt2}.} an essential definition for the work in this paper. We say that a function $u$, defined on the sector $S(\delta)$, is 
\emph{rotationally decreasing} if $u(\rho e^{i\t})$ is a decreasing function of $\t \in [0,\delta]$, for all $\rho >0$.
As we shall see in Section~\ref{sec:Var-and-phi}, the properties of being weakly positive and harmonic in $S(\delta)$ combine nicely to give us this
enriched positivity property.

We shall also draw upon a simple expansion of $u_X$ in terms of the roots of $f_X$. In particular, we have  
\begin{equation} \label{eq:exp-u-with-roots} 
	u_X(z) = \sum_{|\zeta| < 1} \log\left|1 - \frac{\zeta}{z} \right| + \sum_{|\z| \geq 1 }\log\left|1 - \frac{z}{\zeta}\right| + N_X\log|z| + c_X  \,, 
\end{equation} where $N_X := |\{ \z : |\z| < 1 \}|$, the sums are over the roots $\z$ of $f_X$ and $c_X \in \R$ is defined so that $u_X(1) = 0$.
This last property is due to the fact that $f_X(1) = 1$.

Since we will work in the case of $u_X$ harmonic in $S(\delta)$, we will often use the theory of Poisson integration, in which we write a value $u_X(z)$ in terms of an integral along the boundary of $S(\delta)$.  The sector $S(\delta)$ is unbounded and so we will need some basic control over the asymptotic growth of $u_X(z)$ as $z \to \infty$ and $z \to 0$, both of which will be readily available.
We say that a function $u$ on a sector $S(\alpha,\beta)$
has \emph{logarithmic growth} if 
\[ u(z) = O(\log |z|) \text{ as }z \to \infty, \textit{ and  } \, u(z) = O(\log |z|^{-1} ) \text{ as } z \to 0 ,\]
for $z \in S(\alpha,\beta)$. Notice that since $f_X$ is a polynomial of degree at most $n$, $u_X$ has logarithmic growth.

We now introduce an important companion to $u_X$ in this paper, the function $\vp_{\g} = \vp_{\g,u}$. For $\g \in (0,\delta)$ and a function $u$ on $S(\delta)$,
define 
\begin{equation} \label{eq:def-of-phi} \vp_{\g}(z) := u(z) - u(e^{i\g}z). 
\end{equation} The importance of $\vp_{\g}$ comes jointly from the fact that it is  both positive and harmonic in a sector
and its leading term in the series expansion of $\vp_\gamma(1)$ is $\gamma^2 \Var(X) / 2$. This second observation will be noted in Section~\ref{sec:Var-and-phi} and
we record the first observation here.

\begin{obs} \label{obs:basic-vp-facts} For $\delta >0$ and $\g \in (0,\delta)$, let $u$ be a symmetric function on $S(-\delta,\delta)$ 
	and put $\vp_{\g}(z) = \vp_{\g,u}(z)$.  
	\begin{enumerate}
		\item If $u$ is a harmonic function on $S(\delta)$ then $\vp_{\g}(z)$ is harmonic on $S(-\delta,\delta-\g)$;
		\item If $u$ is rotationally-decreasing in the sector $S(\delta)$ then $\vp_{\g}$ is positive in $S(-\g/2,\delta-\g)$.
	\end{enumerate}
\end{obs}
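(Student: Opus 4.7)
The plan is to handle the two parts separately, both by routine manipulations in polar coordinates. Throughout, write $z = \rho e^{i\theta}$.

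For part (1), the key fact is that harmonicity is preserved under rotations in the argument. If $u$ is harmonic on $S(-\delta,\delta)$, then the function $z \mapsto u(e^{i\gamma}z)$ is harmonic on the rotated sector $S(-\delta-\gamma, \delta-\gamma)$. Intersecting this rotated domain with the original $S(-\delta,\delta)$ gives exactly $S(-\delta, \delta-\gamma)$, and on this intersection both $u(z)$ and $u(e^{i\gamma}z)$ are harmonic, hence so is their difference $\varphi_\gamma$. This is essentially a one-line verification.

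For part (2), the plan is to split into two cases based on the sign of $\arg(z)$, using the symmetry hypothesis to handle the negative case. Fix $z = \rho e^{i\theta}$ with $\theta \in [-\gamma/2, \delta-\gamma]$, so that $\theta + \gamma \in [\gamma/2, \delta]$. If $\theta \geq 0$, then $0 \leq \theta < \theta + \gamma \leq \delta$, and rotational decrease of $\rho \mapsto u(\rho e^{i\cdot})$ on $[0,\delta]$ immediately yields $u(\rho e^{i\theta}) \geq u(\rho e^{i(\theta + \gamma)})$. If instead $\theta \in [-\gamma/2, 0)$, then by symmetry $u(\rho e^{i\theta}) = u(\rho e^{-i\theta})$ with $-\theta \in (0, \gamma/2]$, and since $\theta > -\gamma/2$ we have $-\theta < \gamma/2 < \theta + \gamma$ with both values in $[0,\delta]$; applying rotational decrease again gives $u(\rho e^{-i\theta}) \geq u(\rho e^{i(\theta + \gamma)})$, as required. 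Subtracting delivers $\varphi_\gamma(z) \geq 0$.

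I do not anticipate any real obstacle: part (1) is the standard fact that harmonicity is invariant under holomorphic change of variables (here just a rotation), and part (2) is a direct unpacking of the definitions of symmetry and rotational decrease, with the threshold $-\gamma/2$ appearing precisely because that is the angle which, after reflection through the real axis, lies exactly halfway to the rotated angle $\theta + \gamma$. The only subtlety worth flagging is ensuring that all the angles that appear in the case analysis genuinely lie in $[0,\delta]$, which is where the restriction $\theta \leq \delta - \gamma$ is used.
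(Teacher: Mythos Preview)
Your proof is correct and matches what the paper intends: the paper itself does not prove this observation, merely remarking that it ``is not hard to check'' and pointing to \cite{clt2}, and your argument is exactly the routine verification one would expect. The only cosmetic slip is in the boundary case $\theta = -\gamma/2$, where you write $-\theta < \gamma/2 < \theta+\gamma$ but in fact $-\theta = \gamma/2 = \theta+\gamma$; this does not affect the conclusion since rotational decrease still gives $u(\rho e^{-i\theta}) \geq u(\rho e^{i(\theta+\gamma)})$ (with equality).
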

\noindent This observation is not hard to check, but can also be found in \cite[Section 3]{clt2}.

\section{Relating $\vp_\g$ to the variance of $X$} \label{sec:Var-and-phi}

In this section we prove that the variance of $X$ can be lower-bounded by $\vp_{\g}(1)$ (defined at \eqref{eq:def-of-phi})
for $\g \in (0,\delta/2^{6})$. 

\begin{lemma} \label{lem:Var-and-phi} For $\delta \in (0,\pi)$ and $\g \in (0,\delta/2^{6})$, let $X \in \{0,\ldots,n\}$ be a random variable with $\Var(X) > 0$, for which $f_X(z)$ has no zeros in $S(\delta)$. Then
	\[ \Var(X) \geq c \g^{-2} \vp_{\g}(1),
	\] where $c>0$ is an absolute constant and $\vp_{\g} = \vp_{\g,u}$.
\end{lemma}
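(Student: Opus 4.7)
The plan is to express $\varphi_\gamma(1)$ directly as a Taylor series in $\gamma$ whose coefficients are the cumulants of $X$, and then control the higher-order terms by invoking cumulant estimates from the authors' companion paper \cite{clt2}. Since $f_X(1)=1$ forces $u_X(1)=0$, the quantity $\varphi_\gamma(1)$ reduces to $-\log|f_X(e^{i\gamma})|$. The hypothesis that $f_X$ has no zeros in $S(\delta)$ makes the cumulant generating function $K(t):=\log f_X(e^t)$ holomorphic in the disk $\{|t|<\delta\}$, since its singularities sit at $t=\log \zeta_j+2\pi ik$ for roots $\zeta_j$ of $f_X$, and these all have modulus at least $\delta$. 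Hence the Taylor series $K(t)=\sum_{k\geq 1}\kappa_k(X)\,t^k/k!$ converges at $t=i\gamma$.

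Plugging in $t=i\gamma$ and taking real parts collapses the sum to its even part and yields
\[
\varphi_\gamma(1)\;=\;-\Re K(i\gamma)\;=\;\frac{\gamma^2}{2}\Var(X)\;+\;\sum_{m\geq 2}(-1)^{m+1}\frac{\gamma^{2m}}{(2m)!}\kappa_{2m}.
\]
The desired bound $\Var(X)\geq c\gamma^{-2}\varphi_\gamma(1)$ is therefore equivalent to an upper bound of the form $\varphi_\gamma(1)\leq C\gamma^2\Var(X)$, which reduces to showing that the tail from $m\geq 2$ is at most a small constant multiple of $\gamma^2\Var(X)$.

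For this tail the plan is to invoke cumulant estimates from \cite{clt2}, which under the zero-free sector assumption \emph{and} the nonnegativity of the coefficients of $f_X$ give bounds of the shape
\[
|\kappa_{2m}|\;\leq\;A^{2m}(2m)!\,\delta^{2-2m}\,\Var(X)
\]
for an absolute constant $A$. Substituting these into the tail produces a geometric series in $(A\gamma/\delta)^2$; the restriction $\gamma<\delta/2^{6}$ has been chosen so that this ratio is comfortably less than $1/2$, with enough room to absorb the constant $A$ supplied by \cite{clt2}. The tail is then bounded by a small absolute constant times $\gamma^2\Var(X)$, and combining with the leading term gives $\varphi_\gamma(1)\leq C\gamma^2\Var(X)$, proving the lemma.

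The main obstacle is the cumulant bound itself, which is the technical heart imported from \cite{clt2}; this is precisely the point at which the weak positivity of $u_X$ becomes essential. Without this extra input, cumulant estimates purely in terms of zeros need not hold---one can cook up random variables with large zero-free sectors but arbitrarily large higher cumulants---so the zero-free assumption alone would not suffice here.
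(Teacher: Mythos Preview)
Your plan is essentially the paper's proof: expand $\varphi_\gamma(1)$ as a cumulant series with leading term $\tfrac{\gamma^2}{2}\Var(X)$, then bound the higher-order terms using the cumulant machinery from \cite{clt2} (which is exactly where weak positivity enters). The paper carries this out by invoking two specific lemmas from \cite{clt2}---the tail-decay estimate Lemma~\ref{lem:CumulantDecay} and the Marcinkiewicz-type comparison Lemma~\ref{lem:BES}---both applied at scale $\gamma$ rather than $\delta$, and it closes with a pigeonhole step rather than by summing a geometric series.

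That organizational difference is not cosmetic: it sidesteps a small gap in your write-up. You assert that the threshold $\gamma<\delta/2^{6}$ leaves ``enough room to absorb the constant $A$'' in the bound $|\kappa_{2m}|\le A^{2m}(2m)!\,\delta^{2-2m}\Var(X)$, but this pointwise bound is not literally stated in \cite{clt2}; it must be extracted from Lemmas~\ref{lem:CumulantDecay} and~\ref{lem:BES}, and the resulting $A$ is $s_*^{-1}\delta$ with $s_*>s\cdot 2^{-6(L+1)}$ for an absolute $L$ determined by the constant in Lemma~\ref{lem:CumulantDecay}. There is no evident reason this comes out below $2^{6}$, so your geometric series could diverge for $\gamma$ near the top of the allowed range. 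The paper avoids this bookkeeping by applying both lemmas at scale $\gamma$: Lemma~\ref{lem:CumulantDecay} shows the first $L$ terms already carry half of $\sum_j|a_j|\gamma^j$, a single heavy term $|a_j|\gamma^j\gtrsim\varphi_\gamma(1)$ is picked out by averaging, and then Lemma~\ref{lem:BES} (at scale $\gamma$) compares $|a_j|$ directly to $|a_2|$. Since only finitely many $j\le L$ are in play, no convergence issue arises and the constant $2^{6}$ never has to match anything from \cite{clt2}.
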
 

To prove this, we rely heavily on tools developed by the authors in \cite{clt2}. Our first step is to use the following lemma which tells us that
$u_X$ is rotationally decreasing in a sector.

\begin{lemma} \label{lem:decreasing} For $\delta >0$, let $u$ be a weakly positive, symmetric and harmonic function on the sector $S(\delta)$, which has logarithmic growth.
	Then $u$ is rotationally decreasing in $S(\delta/2)$. \end{lemma}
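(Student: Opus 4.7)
The plan is to study, for each $\gamma \in (0, \delta)$, the shifted companion function
\[ \Psi_\gamma(z) := u(e^{-i\gamma/2} z) - u(e^{i\gamma/2} z), \]
which is harmonic on the open sector $\{z : \arg z \in (-\delta + \gamma/2, \delta - \gamma/2)\}$. The symmetry hypothesis on $u$ will give the anti-symmetry $\Psi_\gamma(\bar z) = -\Psi_\gamma(z)$, so $\Psi_\gamma$ vanishes identically on the positive real axis. Along the ray $\arg z = \gamma/2$ a direct calculation yields
\[ \Psi_\gamma(\rho e^{i\gamma/2}) = u(\rho) - u(\rho e^{i\gamma}), \]
which is $\ge 0$ by weak positivity of $u$.

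I would then apply the maximum principle to $\Psi_\gamma$ on the sub-sector $S^+ := \{0 < \arg z < \gamma/2\}$. Its opening angle is $\gamma/2 < \delta/2 < \pi/2$, and $\Psi_\gamma$ inherits logarithmic growth at $0$ and $\infty$ from $u$, so a standard Phragm\'en--Lindel\"of statement is available: the boundary values $\Psi_\gamma = 0$ on $\arg z = 0$ and $\Psi_\gamma \ge 0$ on $\arg z = \gamma/2$ would propagate to $\Psi_\gamma \ge 0$ throughout $S^+$.

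Finally, unpacking $\Psi_\gamma(\rho e^{i\theta}) \ge 0$ for $\theta \in (0, \gamma/2)$ gives $u(\rho e^{i(\theta - \gamma/2)}) \ge u(\rho e^{i(\theta + \gamma/2)})$, and symmetry of $u$ rewrites this as $u(\rho e^{i(\gamma/2 - \theta)}) \ge u(\rho e^{i(\gamma/2 + \theta)})$. For any prescribed $0 \le \theta_1 < \theta_2 \le \delta/2$, I would then choose $\gamma := \theta_1 + \theta_2$ and $\theta := (\theta_2 - \theta_1)/2$; because $\theta_1 < \theta_2 \le \delta/2$ one has $\gamma \in (0, \delta)$ and $\theta \in (0, \gamma/2)$, and the identities $\gamma/2 - \theta = \theta_1$, $\gamma/2 + \theta = \theta_2$ recover $u(\rho e^{i\theta_1}) \ge u(\rho e^{i\theta_2})$. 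The degenerate case $\theta_1 = \theta_2$ is trivial, so this establishes that $u$ is rotationally decreasing on $S(\delta/2)$. Note that the threshold $\delta/2$ in the conclusion is exactly what is needed to guarantee $\gamma = \theta_1 + \theta_2$ stays inside the range $(0, \delta)$ where $\Psi_\gamma$ is available; the argument cannot be pushed past it in this form.

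The main technical obstacle I anticipate is justifying the maximum principle on the unbounded sector $S^+$ under only logarithmic control of $\Psi_\gamma$ at $0$ and $\infty$. I expect this to follow from a standard Phragm\'en--Lindel\"of statement for sectors of opening less than $\pi$, whose growth threshold of $|z|^{\pi/(\gamma/2)} \ge |z|^2$ at infinity (and the analogous threshold at the origin) dominates the logarithmic growth of $\Psi_\gamma$ by a wide margin.
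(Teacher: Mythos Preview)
Your argument is correct. The paper does not give a self-contained proof here: it simply invokes Lemma~4.1 of \cite{clt2} in a limiting regime, so your direct route via the auxiliary function $\Psi_\gamma$ and a Phragm\'en--Lindel\"of argument is genuinely more transparent in the context of this paper. The key observation, that $\Psi_\gamma$ vanishes on the real axis by symmetry and is nonnegative on the ray $\arg z = \gamma/2$ by weak positivity, is exactly the right way to combine the two hypotheses, and your final change of variables $\gamma = \theta_1+\theta_2$, $\theta = (\theta_2-\theta_1)/2$ cleanly explains why the conclusion holds precisely on $S(\delta/2)$ and no further. One small point: the boundary case $\theta_1 = 0$ gives $\theta = \gamma/2$, which lies on $\partial S^+$ rather than in its interior, but there the desired inequality $u(\rho) \ge u(\rho e^{i\theta_2})$ is immediate from weak positivity, so nothing is lost.

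Regarding the technical obstacle you flag: the Phragm\'en--Lindel\"of step on the sector $S^+$ of opening $\gamma/2$, with logarithmic control at both $0$ and $\infty$, is indeed standard, and in fact the paper already contains the tool you need. Lemma~\ref{lem:poisson-integration-sector} in the appendix shows that a harmonic function on (a neighborhood of) a sector with logarithmic growth equals the Poisson integral of its boundary values; since $\Psi_\gamma$ is harmonic on a sector strictly containing $\overline{S^+}\setminus\{0\}$ and has nonnegative boundary data, positivity of the Poisson kernel gives $\Psi_\gamma \ge 0$ on $S^+$ immediately. So your anticipated difficulty dissolves once you invoke that lemma, and the proof is complete.
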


\begin{proof}  This lemma follows from Lemma 4.1 in \cite{clt2}, by applying the lemma for all $b \rightarrow 0$ and taking $r$ sufficiently large so that the condition is satisfied.\end{proof}

\vspace{2mm}

We now turn to note a useful series expansion of $u(e^{w})$. We refer the reader to  \cite[Lemma 3.1]{clt2} for a more detailed proof of the facts stated here. 
If $u$ is symmetric and harmonic in $S(\delta)$ we may express $u(e^{w})$ as a series for all $w$ in a neighborhood of $0\in \C$. Indeed, if $u(1) = 0$ we may write
\begin{equation} \label{eq:U-series} u(e^{w}) = \sum_{j\geq 1} a_j\Re\left( w^j \right), 
\end{equation} for all $w \in B(0,\delta)$,
where $(a_j)_{j\geq 1}$ is a sequence of real numbers. The sequence $(a_j)_{j\geq 1}$ is very closely related to the \emph{cumulant sequence} of the random variable $X$, and in particular 
\begin{equation} \label{eq:var-is-2nd-cumulant} \E X = a_1, \quad \Var(X) = a_2/2. 
\end{equation}  We call this sequence $(a_j)_{j\geq 1}$ the \emph{normalized cumulant sequence of} $X$ and, more generally, of a symmetric harmonic function on $S(\delta)$.

Using the definition of $\vp_\g = \vp_{\g,u}$ at \eqref{eq:def-of-phi} along with \eqref{eq:U-series} we obtain an expansion for $\vp_\g$
\begin{equation} \label{eq:phi-series} \vp_{\g}(e^{w}) = \sum_{j\geq 2 } a_j\Re\left(w^j - (i\g + w)^j\right),
\end{equation} for sufficiently small $w$, when $\g \in (0,\delta/2)$.
We will then apply the following lemma which tells us that if a function is decreasing, weakly positive and symmetric we can obtain
very tight control over the tail of the normalized cumulant sequence. The following lemma is the special case of $b=0$ of Lemma 6.1 in our paper \cite{clt2}.

\begin{lemma}\label{lem:CumulantDecay}  For $\eps \in (0,1/2)$, let $u$ be a
	rotationally decreasing, 
	symmetric and harmonic function on $B(1,2^{4}\eps)$.
	Let $(a_j)_{j\geq 1}$ be the normalized cumulant sequence of $u$.
	If $(a_j)_{j\geq 2}$ is a non-zero sequence then
	for all $L \geq 2$ we have
	\begin{equation} \label{equ:CumulantFrac} \frac{ \sum_{j\geq L } |a_j|\eps^j }{ \sum_{j\geq 2} |a_j|\eps^i } \leq C \cdot 2^{-L}, \end{equation}
	where $C >0$ is an absolute constant.  
\end{lemma}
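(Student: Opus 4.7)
The plan is to translate to $w$-coordinates via $U(w) = u(e^w)$. For $\eps$ smaller than an absolute constant, $U$ is harmonic, symmetric, and rotationally decreasing on a ball $B(0,R)$ with $R \asymp 2^4 \eps$, satisfies $U(0) = 0$, and has real Taylor coefficients. Writing $U = \Re F$ with $F(w) = \sum_{j \geq 1} a_j w^j$, I isolate the indices $j \geq 2$ by subtracting the linear part: set $\tilde F(w) = F(w) - a_1 w$ and $\tilde U = U - a_1\Re w$. A direct check shows $\tilde U$ is still harmonic, symmetric, and rotationally decreasing on the same ball, and that $\tilde U(0) = \tilde U'(0) = 0$ on the real axis.

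Next comes a Cauchy/Borel--Carath\'eodory estimate. Since $\tilde U(0) = 0$, the mean-value property gives $\tilde M(R) := \max_{|w|=R}\tilde U(w) \geq 0$. Applying Borel--Carath\'eodory to $\tilde F$ at outer radius $R = 2^4\eps$ and intermediate radius $\rho = 2\eps$ yields $|\tilde F(w)| \leq \tfrac{4\eps}{R - 2\eps}\tilde M(R) = \tfrac{2}{7}\tilde M(R)$ on $|w| = 2\eps$, and Cauchy's coefficient inequality at radius $2\eps$ then gives
\[
|a_j|\eps^j \;\leq\; \tfrac{2}{7}\,\tilde M(R)\cdot 2^{-j}\qquad (j \geq 2),
\]
so that $\sum_{j \geq L}|a_j|\eps^j \leq \tfrac{4}{7}\tilde M(R)\cdot 2^{-L}$.

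The crux, and main obstacle, is to prove the matching upper bound $\tilde M(R) \leq C'\sum_{j \geq 2}|a_j|\eps^j$, where rotational decrease must enter essentially. Two preliminary observations help: first, $\tilde U$ is convex along the real axis (since $\tilde U_{ss}(s,0) = -\tilde U_{tt}(s,0) \geq 0$ by rotational decrease and harmonicity), which combined with $\tilde U(0) = \tilde U'(0) = 0$ forces $\tilde U \geq 0$ on $(-R, R)$; second, rotational decrease gives $\tilde M(R) \leq \max_{s \in [-R, R]}\tilde U(s)$, reducing matters to a one-dimensional estimate on the real axis. The task is then to transport the elementary bound $\tilde M(\eps) \leq \sum_{j \geq 2}|a_j|\eps^j$ from radius $\eps$ out to radius $R = 2^4\eps$ without losing factors like $16^j$. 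A natural attack combines Hadamard's three-circles theorem (which yields $\tilde M(2\eps) \leq \tilde M(\eps)^{3/4}\tilde M(R)^{1/4}$) with Harnack applied to the positive harmonic function $\Phi_\gamma(w) = U(w) - U(w+i\gamma)$, whose normalized limit $\tfrac{1}{2}U''(s)$ as $\gamma \to 0$ controls derivative bounds at interior points via $\Phi_\gamma(s) \leq C_1\Phi_\gamma(0) \asymp a_2\gamma^2$. Closing this loop to obtain the linear comparison $\tilde M(R) \leq C'\sum_{j \geq 2}|a_j|\eps^j$ and combining with the Cauchy estimate above yields $\sum_{j \geq L}|a_j|\eps^j \leq C\cdot 2^{-L}\sum_{j \geq 2}|a_j|\eps^j$, completing the proof.
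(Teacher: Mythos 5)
First, a point of reference: the paper does not prove this lemma at all --- it is imported verbatim as the $b=0$ case of Lemma~6.1 of \cite{clt2}, so there is no in-paper argument to compare against and your proposal must stand on its own. Your first two paragraphs are fine: subtracting the linear part preserves harmonicity, symmetry and rotational decrease; $\tilde{U}$ is convex and nonnegative on the real segment; rotational decrease reduces $\tilde{M}(R)=\max_{|w|=R}\tilde{U}$ to the real axis; and Borel--Carath\'eodory at radius $2^4\eps$ followed by Cauchy at radius $2\eps$ correctly yields $\sum_{j\ge L}|a_j|\eps^j\le \tfrac{4}{7}\tilde{M}(R)2^{-L}$. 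But the entire content of the lemma is then concentrated in the claim $\tilde{M}(R)\le C'\sum_{j\ge2}|a_j|\eps^j$, which you explicitly do not prove. Note that this intermediate claim is not even a consequence of the lemma you are trying to prove: the conclusion $|a_j|\eps^j\le C2^{-j}\sigma$ (with $\sigma=\sum_{k\ge2}|a_k|\eps^k$) only gives $\sum_j|a_j|\rho^j<\infty\cdot\sigma$ for $\rho<2\eps$, whereas your Borel--Carath\'eodory/Cauchy step forces you to bound the maximum at some radius strictly larger than $2\eps$. So you are routing the proof through a statement that is at least as hard as, and possibly stronger than, the target.

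The tools you gesture at do not close this gap. Hadamard's three-circles theorem bounds the \emph{middle} circle by the inner and outer ones, which is the wrong direction for controlling $\tilde{M}(R)$; to reverse it you would need a Harnack-type inequality for $\tilde{U}$ itself, but $\tilde{U}$ is not positive off the real axis (already $U=a_2\Re(w^2)$ gives $\tilde{U}<0$ for $|\Im w|>|\Re w|$). As for Harnack applied to $\Phi_\gamma(w)=U(w)-U(w+i\gamma)$: this function is positive and harmonic only on $\{\Im w>-\gamma/2\}$ intersected with the domain, so two points $0$ and $s$ on the real axis sit at distance $\gamma/2$ from the boundary of the positivity region, and the Harnack constant between them grows like $(1+|s|/\gamma)^2$. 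In your intended limit $\gamma\to0$ with $|s|\asymp\eps$ this constant blows up, so $\Phi_\gamma(s)\le C_1\Phi_\gamma(0)$ with absolute $C_1$ is false; if instead you take $\gamma\asymp\eps$ to keep the Harnack constant absolute, then $\Phi_\gamma(0)=-\sum_j a_j\gamma^j\cos(j\pi/2)$ is no longer comparable to $a_2\gamma^2$ without already knowing the cumulant decay you are trying to prove. This circularity is precisely the difficulty the (rather delicate, multi-step) argument in \cite{clt2} is designed to break, and your sketch does not break it.
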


We also need the following result from our paper \cite[Lemma 7.1]{clt2}, which says that if there is a small $k$ for which $|a_k|$ is large \emph{then} $|a_2| = 2\Var(X)$ must \emph{also} be large. This lemma can be a seen as an quantitative form of a (non-quantitative) lemma co-discovered by De Angelis \cite{deAngelis1} 
and Bergweiler, Eremenko and Sokal \cite{BES}; further, it may be viewed as a quantitative version of Marcinkiewicz's classical theorem on cumulants \cite{lukacs,marcinkiewicz}:

\begin{lemma}\label{lem:BES} For $s > 0 $ and $L> 2$, let $u$ be a weakly positive, symmetric and harmonic function on $B(1,2s)$ 
	and let $(a_j)_{j}$ be its normalized cumulant sequence. If $(a_j)_{j\geq 2}$
	satisfies
	\begin{equation} \label{equ:cutoff} \sum_{j\geq 2}^{L} |a_j|s^j \geq \sum_{j> L} |a_j|s^j, \end{equation} then there exists a real number $s_{\ast} > s2^{-6(L+1)}$ for which 
	$ |a_2| \geq s^{j-2}_{\ast}|a_j|, $ for all $j \geq 2$.\end{lemma}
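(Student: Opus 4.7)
The plan is to locate $s_*$ as an explicit extremum in the coefficient sequence $(a_j)$ and then to bound it from below by combining the self-improvement of the head/tail hypothesis under rescaling with absolute-value estimates on individual $a_j$ extracted from weak positivity. I take
\[
s_* \;:=\; \inf\big\{ (|a_2|/|a_j|)^{1/(j-2)} : j \geq 3,\, a_j \neq 0 \big\},
\]
with the usual conventions ($+\infty$ if the set is empty, $0$ if $a_2 = 0$ while some $a_j \neq 0$ with $j \geq 3$). By construction, provided $s_* > 0$, we have $|a_j|\,s_*^{j-2} \leq |a_2|$ for every $j \geq 3$ (the $j = 2$ case being trivial), which is exactly the desired conclusion. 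The task therefore reduces to proving the quantitative bound $s_* > s \cdot 2^{-6(L+1)}$, which in particular forces $s_* > 0$ and hence $a_2 \neq 0$.

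A preliminary observation is that the head/tail hypothesis propagates from scale $s$ to every smaller scale $t \in (0,s]$. Since $(t/s)^j$ is monotone decreasing in $j$, a termwise comparison yields
\[
\sum_{j>L}|a_j|\,t^j \;\leq\; (t/s)^{L+1}\sum_{j>L}|a_j|\,s^j \;\leq\; (t/s)^{L+1}\sum_{j=2}^L|a_j|\,s^j \;\leq\; (t/s)\sum_{j=2}^L|a_j|\,t^j,
\]
using $(t/s)^{L+1}(s/t)^L = t/s$ at the last step. Thus at every scale $t \in (0,s]$, the head $\sum_{j=2}^L|a_j|t^j$ dominates the tail $\sum_{j>L}|a_j|t^j$, and the total mass $\sum_{j\geq 2}|a_j|t^j$ is captured within a factor of $2$ by the head alone.

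The main work is to lower-bound $s_*$ using weak positivity. Pulled back through $z = e^w$, weak positivity reads $u(e^{t\cos\phi}) \geq u(e^{te^{i\phi}})$ for every $t \in (0,s]$ and every real $\phi$. Combined with the Fourier identity $u(e^{te^{i\phi}}) = \sum_j a_j\,t^j \cos(j\phi)$, orthogonality of the cosine basis gives
\[ \pi\, a_{j_0}\, t^{j_0} \;=\; \int_0^{2\pi} u(e^{te^{i\phi}})\cos(j_0\phi)\,d\phi \qquad (j_0 \geq 1). \]
Splitting this integral according to the sign of $\cos(j_0\phi)$ and substituting the weak-positivity majorant $u(e^{t\cos\phi}) = \sum_j a_j(t\cos\phi)^j$ in each piece bounds $|a_{j_0}|\,t^{j_0}$ in terms of the real-axis series, whose leading term is $|a_2|\,t^2$ (with higher-order contributions controlled by the head/tail self-improvement of the preceding paragraph). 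Iterating this extraction over the at most $L-1$ nontrivial indices $j_0 \in \{3, \ldots, L\}$, and at each step paying a factor of $2^6$ in the scale, yields $s_* > s \cdot 2^{-6(L+1)}$, with the extra constant absorbing the boundary between the head and tail regimes.

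The main obstacle is converting the signed constraints coming from weak positivity into genuine absolute-value bounds on the individual $|a_{j_0}|$, with only a $2^6$ loss per step. All coefficients $a_j$ appear simultaneously on the right-hand side of the Fourier extraction identity, so the sign-splitting of $\phi$ and the iterative elimination of indices must be executed carefully enough that the cumulative loss across the $L-1$ candidate dominant indices remains linear in $L$ in the exponent. This quantitative sign/scale bookkeeping is the crux of the argument; the precise form $6(L+1)$ in the statement is exactly an artifact of this inductive scheme, with the $+1$ allowing slack for the head-tail transition from Step 2.
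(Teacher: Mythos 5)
There is a genuine gap here, and it sits exactly where you locate "the crux." First, a contextual point: the paper does not prove this lemma at all --- it is imported verbatim as Lemma 7.1 of the authors' earlier paper \cite{clt2} --- so your sketch is being measured against that external argument, which is a genuinely delicate multi-page induction. Your outline has the right flavour (Fourier extraction of $a_{j_0}$ from $u(e^{te^{i\phi}})=\sum_j a_j t^j\cos(j\phi)$, played off against weak positivity), and your preliminary observation that the head/tail inequality propagates to all smaller scales is correct. But the two steps that actually produce the conclusion are not carried out. (i) The sign-splitting as you describe it does not work: weak positivity only gives the one-sided bound $u(e^{te^{i\phi}})\leq u(e^{t\cos\phi})$, so on the set where $\cos(j_0\phi)<0$ substituting the majorant moves the integral in the wrong direction; one cannot bound $|a_{j_0}|t^{j_0}$ this way. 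The standard repair is to integrate the non-negative quantity $u(e^{t\cos\phi})-u(e^{te^{i\phi}})$ against the non-negative kernels $1\pm\cos(j_0\phi)$ and use the mean-value property $\int_0^{2\pi}u(e^{te^{i\phi}})\,d\phi=2\pi u(1)=0$; this yields $\pi|a_{j_0}|t^{j_0}\leq \int_0^{2\pi}u(e^{t\cos\phi})\bigl(1+|\cos(j_0\phi)|\bigr)\,d\phi$ plus bookkeeping, which is a different and more careful mechanism than the one you state.

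(ii) More seriously, even granting such a bound, the right-hand side is $\sum_{j\ \mathrm{even}} a_j c_j t^j$ (signed coefficients, $c_j=\int_0^{2\pi}\cos^j\phi\,d\phi$), and asserting that its "leading term is $|a_2|t^2$" presupposes that the higher-order terms $a_4t^4,a_6t^6,\dots$ are dominated by $|a_2|t^2$ at the relevant scale --- which is essentially the conclusion of the lemma. Your head/tail observation controls $\sum_{j>L}|a_j|t^j$ by the head $\sum_{j=2}^L|a_j|t^j$, not by $|a_2|t^2$, so it does not break this circularity. The entire quantitative content --- how $t$ is chosen at each stage (in \cite{clt2} it is chosen as a near-maximizer of $|a_j|t^j$ over the head indices), what the inductive hypothesis is, and why each of the at most $L-1$ steps costs precisely a factor $2^6$ in scale --- is asserted rather than derived; the phrase "the precise form $6(L+1)$ \dots is exactly an artifact of this inductive scheme" concedes that the scheme itself has not been specified. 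As written, the proposal is a correct statement of the strategy together with an acknowledgment that the decisive estimate remains to be proved.
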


\vspace{4mm}

With these tools laid out, we are now in a position to prove the main result of this section. 

\vspace{4mm}

\noindent \emph{Proof of Lemma~\ref{lem:Var-and-phi}.} 
Let $X \in \{0,\ldots,n\}$ be a random variable, let $f_X$ be its probability generating function, let $u = u_X$ be its logarithmic potential and 
let $\vp_{\g} = \vp_{\g,u}$ be the function defined at \eqref{eq:def-of-phi} for $0 < \g < \delta/2^{6}$. Since $f_X$ has no zeros in $S(\delta)$, it follows that 
$u_X$ is harmonic on $S(\delta)$ and thus we may use the expansion of $\vp(e^w)$, 
\[ \vp_{\g}(e^{w}) = \sum_{j \geq 2} a_j\Re\big( w^j - (i\g + w) ^j\big), 
\] as noted in Section~\ref{sec:definitions} at \eqref{eq:phi-series}, which is valid for all  $|w| \leq \delta/2$. Now set $w = 0 $ and apply the triangle inequality to obtain
\[ |\vp_{\g}(1)| \leq \sum_{j \geq 2} |a_j|\g^j. \]
Since $u$ is weakly-positive, symmetric and harmonic in $S(\delta)$ and has logarithmic growth, we may apply Lemma~\ref{lem:decreasing} to see that $u$ is rotationally decreasing in $S(\delta/2)$.  Seeking to apply Lemma~\ref{lem:CumulantDecay} with $\eps = \gamma$, note that $B(1,2^4 \gamma) \subset B(1,\delta/4) \subset S(\delta/2)$ and so for all $L$, we have
\begin{equation} \label{eq:app-of-cumulant-decay} \frac{ \sum_{j\geq L } |a_j|\g^j }{ \sum_{j\geq 2} |a_j|\g^i } \leq C \cdot 2^{-L},
\end{equation} where $C$ is a large, but absolute constant. If we put $L = \log_2 C+1$, we have 
\[ \vp_{\g}(1)/2 \leq \sum_{ j = 2}^{L} |a_j|\g^j,\] and thus by averaging there is a $j \in [L]$ for which 
\begin{equation} \label{eq:aj-and-vp} \vp_{\g}(1)/(2\g^jL) \leq  |a_j|. \end{equation}
Now \eqref{eq:app-of-cumulant-decay} also tells us that
\[\sum_{ j = 2}^{L} |a_j|\g^j \geq  \sum_{ j>L} |a_j|\g^j,\] 
and so we may apply Lemma~\ref{lem:BES} to learn that there is a $s_{\ast} \geq  \g 2^{-6(L+1)} =: \g c_0 $ for which
$|a_2| \geq s_{\ast}^{j-2}|a_j|$. Using this, along with \eqref{eq:aj-and-vp} gives 
\[ \Var(X) = |a_2|/2 \geq \frac{1}{2}(\g c_0 )^{j-2}|a_j| \geq \vp_{\g}(1)(c_0\g)^{-2} (\g c_0)^j\frac{1}{4L\g^j} \geq c\g^{-2}\vp_{\g}(1) , \]
where $c >0$ is an absolute constant. \qed

\section{Connection to a Mellin Transform} \label{sec:connection-to-mellin}

In the previous section, we showed that we could bound $\Var(X)$ in terms of $\vp_{\g}(1)$. 
In this section, we take another step towards the proof of Theorem~\ref{thm:Var-lower-bound},
by obtaining a lower bound for $\vp_{\g}(1)$ in terms of a function\footnote{As with $\vp_{\g}$, we shall usually 
	suppress the explicit dependence on $u$ as it will be clear from context.} $H_{M,\tau}(s) = H_{M,\tau,u}(s)$, which resembles a truncated version of a Mellin transform. 
In particular, for a harmonic function $u$ on $S(0,\tau)$ for $\tau \in (0,\pi)$, $s \in (0,1)$ and $M \geq 1$, we define

\begin{equation}\label{eq:H_R-def} H_{M,\tau,u}(s) := \int_{1/M}^M (u(t) - u(e^{i\tau} t))t^{-(s + 1)}\,dt \, .\end{equation}

We note that taking $M \to \infty$ yields the Mellin transform of our function $\varphi_\tau(t)$.  The following lemma is the main result of this section and allows us to control $\vp_{\g}$ in terms of $H_{M,\tau}$.

\begin{lemma} \label{lem:phi-at-least-H_r} For $\delta \in (0,\pi)$, 
	$M \geq 1$, $s \in (0,1)$ and for $\eta \in (0,1/2)$ set  $\g = \eta \delta$ and let $u$ be a weakly positive harmonic function on $S(\delta)$ that has logarithmic growth. Then  
	\begin{equation} \vp_{\g}(1)  \geq \frac{c_\eta H_{M,\delta}(s)}{\delta M^{2\pi/\delta + s}}, \end{equation}
	where $c_\eta >0$ is a constant depending only on $\eta$.
\end{lemma}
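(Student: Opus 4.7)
The plan is to use a Poisson integral representation for the harmonic function $u$ on the sector $S(0,\delta)$. After reducing to $u(1)=0$ (by adding a constant, which leaves both sides invariant), I would apply the conformal map $z\mapsto z^{\pi/\delta}$ sending $S(0,\delta)$ to the upper half-plane; the standard upper half-plane Poisson kernel then pulls back to give an explicit kernel $K_1(r,\alpha,t)$ for the boundary ray $\{t>0\}$ and, by the bisector symmetry, the kernel $K_1(r,\delta-\alpha,t)$ for the boundary ray $\{te^{i\delta}:t>0\}$. The logarithmic growth of $u$ at $0$ and $\infty$ is handled by the standard correction of subtracting off a multiple of $\log|z|$ so that the residual is bounded and the Poisson formula applies.

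The central computation is to evaluate $\varphi_\gamma$ at the point $\rho e^{i(\delta-\gamma)/2}$, the ``midpoint'' of the angular arc. At this particular $\alpha=(\delta-\gamma)/2$ the symmetry $\alpha+\gamma=(\delta+\gamma)/2=\delta-\alpha$ causes the Poisson kernels on the two boundary rays to pair up, collapsing the representation to the clean identity
$$\varphi_\gamma(\rho e^{i(\delta-\gamma)/2}) \;=\; \int_0^\infty \bigl[K_1(\rho,(\delta-\gamma)/2,t) - K_1(\rho,(\delta+\gamma)/2,t)\bigr]\,\varphi_\delta(t)\,dt,$$
with no cross terms involving one-sided boundary data. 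The kernel in brackets is manifestly positive and, by direct computation from the explicit form of $K_1$, equals $2\sin(\gamma\pi/\delta)\,t^{2\pi/\delta-1}/(\delta\,D_-(t)D_+(t))$ with $D_\pm(t)=t^{2\pi/\delta}\pm 2t^{\pi/\delta}\sin(\gamma\pi/(2\delta))+1$.

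Taking $\rho=1$ and truncating to $t\in[1/M,M]$ (harmless, since the integrand is non-negative by weak positivity of $u$), the elementary bound $D_-(t)D_+(t)\lesssim M^{4\pi/\delta}$ at the worst-case endpoint yields $\varphi_\gamma(e^{i(\delta-\gamma)/2}) \geq c_\eta H_{M,\delta}(s)/(\delta M^{2\pi/\delta+s})$, with the factor $M^{-2\pi/\delta}$ arising from the conformal exponent $\pi/\delta$ appearing squared in the denominator. Finally, to convert this into the claimed bound for $\varphi_\gamma(1)$, I would compare $\varphi_\gamma(1)$ to $\varphi_\gamma(e^{i(\delta-\gamma)/2})$ using that $\varphi_\gamma$ is a positive harmonic function in a suitable subsector (per Observation~\ref{obs:basic-vp-facts} combined with Lemma~\ref{lem:decreasing}) together with the explicit Poisson identities, obtaining $\varphi_\gamma(1)\geq c_\eta\,\varphi_\gamma(e^{i(\delta-\gamma)/2})$. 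The main obstacle I expect is precisely this last comparison: the points $1$ and $e^{i(\delta-\gamma)/2}$ do not both lie in the positivity sector $S(-\gamma/2,\delta/2-\gamma)$ guaranteed by Observation~\ref{obs:basic-vp-facts}, so a direct Harnack does not apply, and one must exploit the explicit Poisson identity for $\varphi_\gamma$ together with the weak positivity of $u$ to push the argument through with the correct $\eta$-dependent constant (consistent with the sharpness example in Section~\ref{subsec:discuss}).
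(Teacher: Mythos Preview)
Your Poisson-integral derivation of the identity
\[
\varphi_\gamma(\rho e^{i(\delta-\gamma)/2}) \;=\; \int_0^\infty \bigl[K_1(\rho,\tfrac{\delta-\gamma}{2},t) - K_1(\rho,\tfrac{\delta+\gamma}{2},t)\bigr]\,\bigl(u(t)-u(te^{i\delta})\bigr)\,dt
\]
is correct and produces exactly the same kernel the paper obtains; the paper arrives at it by a different route, bisecting $S(0,\tau)$ into the half-sectors $S(0,\tau/2)$ and $S(\tau/2,\tau)$ and cancelling along the midline $\arg z=\tau/2$ (your kernel difference simplifies to the paper's $P_{z,\tau/2}(t)$ after the identity $D_-D_+ = 1+2t^{2\pi/\delta}\cos(\pi\gamma/\delta)+t^{4\pi/\delta}$). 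Your truncation and kernel lower bound also match Lemma~\ref{lem:Ratio-PKernel-and-ts}. One technicality: the paper works with a parameter $\tau\in(\delta/2,\delta)$ and sends $\tau\uparrow\delta$ at the end, since harmonicity is only assumed on $S(\delta)$ and the Poisson formula needs a neighborhood of the closed sector; you should do the same.

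The gap is exactly where you flag it, and your proposed fix (``exploit the explicit Poisson identity together with weak positivity'') is not a proof. The paper does \emph{not} attempt the pointwise comparison $\varphi_\gamma(1) \geq c_\eta\,\varphi_\gamma(e^{i(\delta-\gamma)/2})$ at all. Instead (Lemma~\ref{lem:vp-compare}) it abandons the single radius $\rho=1$: start a planar Brownian motion at $1$ and stop it upon exiting the bounded polar rectangle
\[
R=\bigl\{\rho e^{i\theta}: e^{-\delta}\le\rho\le e^\delta,\ -\tfrac{\gamma}{2}\le\theta\le\tfrac{\delta-\gamma}{2}\bigr\}.
\]
Non-negativity of $\varphi_\gamma$ on $R$ then gives
\[
\varphi_\gamma(1)=\E\,\varphi_\gamma(B_T)\;\ge\;\PP\!\left(\arg B_T=\tfrac{\delta-\gamma}{2}\right)\cdot\min_{\rho\in[e^{-\delta},e^\delta]}\varphi_\gamma\!\left(\rho e^{i(\delta-\gamma)/2}\right)\;\ge\;c_\eta\min_\rho\varphi_\gamma\!\left(\rho e^{i(\delta-\gamma)/2}\right),
\]
the hitting probability being $\ge c_\eta$ since $R$ has aspect ratio depending only on $\eta$. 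The price is that the kernel lower bound must now be uniform in $\rho\in[e^{-\delta},e^\delta]$ rather than just at $\rho=1$, but this is harmless because the kernel scales homogeneously and $\rho$ is bounded. Passing to the minimum over $\rho$, rather than trying to compare $1$ to the single point $e^{i(\delta-\gamma)/2}$ by Harnack, is the idea that closes the argument.
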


So far we have not made it clear why $H_{M, \delta}$ is any easier to work with than $\vp_{\g}$ and the reader may be skeptical that Lemma~\ref{lem:phi-at-least-H_r} is of any use to us. However, we shall see that $H_{M,\delta}(s)$ has a very different behavior for small $s >0$, which we are able to take full advantage of.  
The reader should also keep in mind that when we apply Lemma~\ref{lem:phi-at-least-H_r} in our proof of Theorem~\ref{thm:Var-lower-bound}, we will ultimately choose $M$ to be a multiple of $R$, $s \to 0$, and $\eta$ an explicit small constant.

We now turn to the proof of Lemma~\ref{lem:phi-at-least-H_r}, which naturally breaks into three steps. In the first step we compare $\vp_{\g}(1)$ with values  $\vp_{\g}(\rho e^{i(\delta-\g)/2})$ for all $\rho \approx 1$. In the second step we use the theory of \emph{Poisson integration} to express $\vp_{\g}(\rho e^{i(\delta-\g)/2})$ in an integral form with positive integrand. Then, finally, we relate the integral form obtained in the second step to the integral $H_{M,\delta}(s)$.

\subsection{Moving away from the boundary}

Here we shall compare $\vp_{\g}(1)$ to the values $\vp_\g(\rho e^{i(\delta - \g)/2})$, where $\rho \approx 1$, by using the connection between harmonic functions and Brownian motion. 
This connection is well-known and we refer the reader 
to Chapters 7 and 8 in the book \cite{peres} for a detailed treatment. 

Here we need a basic estimate on the probability that a Brownian motion hits the top side of a particular polar-rectangle at its first exit. 

\begin{obs}\label{obs:B-motion-hits-side} For $\delta >0$ and $\eta \in (0,1/2)$, let $\g =\eta \delta$, let $(B_t)_t$ be a planar Brownian motion started at $1 \in \C$, let  
	\[ R := \{ \rho e^{i\t} : \rho \in [e^{-\delta},e^\delta], \t \in [-\gamma/2,  (\delta - \g)/2]  \}, \]
	and let $T$ be the stopping time $T := \min\{t : B_t \in \partial R \}$. Then
	\[ \PP\left( \arg(B_T) = (\delta - \g)/2 \right) \geq c_{\eta},\]
	for some constant $c_{\eta} >0$ depending only on $\eta$.
\end{obs}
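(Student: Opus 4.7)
My plan is to reduce the observation to a fixed, $\delta$-independent rectangle exit probability via the conformal map $\log$, and then apply Brownian scaling.

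Since we will apply this with $\delta<\pi$ (as in Lemma~\ref{lem:phi-at-least-H_r}), the polar rectangle $R$ sits inside the slit plane $\C\setminus(-\infty,0]$, and the principal branch of $\log$ provides a conformal bijection from $R$ onto the Euclidean rectangle
\[ R' \;=\; [-\delta,\delta]\times[-\g/2,\,(\delta-\g)/2], \]
sending $1$ to $0$ and the top arc $\{\arg(z)=(\delta-\g)/2\}\cap\partial R$ onto the top side of $R'$. By the conformal invariance of planar Brownian motion, $(\log B_t)$ is a time-changed Brownian motion, and the first-exit location in $\partial R'$ is precisely the image of $B_T$. Thus the target probability equals the probability that a Brownian motion started at $0$ first exits $R'$ through its top side.

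Next, applying Brownian scaling via $w\mapsto w/\delta$, this equals the probability that a planar Brownian motion started at $0$ first exits
\[ R'' \;=\; [-1,1]\times[-\eta/2,\,(1-\eta)/2] \]
through its top side. Since $R''$ and its basepoint depend only on $\eta$, this probability is a function $c_\eta$ of $\eta$ alone. Because $0$ lies in the interior of $R''$ and the top side is an arc of positive length in $\partial R''$, harmonic measure from $0$ assigns it positive mass, giving $c_\eta>0$. If an explicit bound is desired one can subtract, from the one-dimensional slab exit probability $\eta$ through the top of $\R\times[-\eta/2,(1-\eta)/2]$, an upper bound on the probability of exiting $R''$ through one of its vertical sides first, the latter being controlled by the Gaussian tail of the horizontal coordinate.

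None of the steps above is genuinely difficult; the only care required is in the first, to check that the principal branch of $\log$ is well-defined on $R$ (using $\delta<\pi$) and that its boundary maps correctly onto the four sides of $R'$, in particular identifying the top arc with the top edge. Once that bookkeeping is done, the statement follows immediately from standard facts about two-dimensional Brownian motion.
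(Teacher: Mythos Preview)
The paper does not actually prove Observation~\ref{obs:B-motion-hits-side}; it is stated as a basic estimate and used without justification in the proof of Lemma~\ref{lem:vp-compare}. Your argument supplies a correct proof: the conformal map $z\mapsto\log z$ transforms the polar rectangle into the Euclidean rectangle $[-\delta,\delta]\times[-\gamma/2,(\delta-\gamma)/2]$, conformal invariance of Brownian motion preserves the exit distribution, and Brownian scaling by $1/\delta$ reduces everything to a fixed rectangle $[-1,1]\times[-\eta/2,(1-\eta)/2]$ depending only on $\eta$, with the starting point $0$ in its interior. The positivity of $c_\eta$ then follows from positivity of harmonic measure on any nondegenerate boundary arc. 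This is exactly the kind of standard reduction the authors presumably had in mind when labeling the statement an observation.

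One minor remark: your caveat that $\delta<\pi$ is needed so that the principal branch of $\log$ is well-defined on $R$ is slightly stronger than necessary (the angular width of $R$ is only $\delta/2$, so one could allow somewhat larger $\delta$), but since the observation is only ever invoked with $\delta\in(0,\pi)$ this is harmless.
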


This estimate together with non-negativity of $\varphi_\gamma$ then allows us to compare $\vp_\gamma(1)$ to values of $\vp$ away from the boundary of $S(0,\delta)$.

\begin{lemma}\label{lem:vp-compare}
	For $\delta \in (0,\pi)$ and for $\eta \in (0,1/2)$, let $\g = \eta \delta$ and let $u$ be a weakly-positive, symmetric, harmonic function on $S(\delta)$
	that has logarithmic growth.
	Then \[ \vp_{\g}(1) \geq c_\eta  \min_{\rho \in[e^{-\delta},e^\delta]} \vp_{\g}(\rho e^{i(\delta-\g)/2}), 
	\]where $c_\eta >0$ is a constant depending only on $\eta$.
\end{lemma}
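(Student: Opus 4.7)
The strategy is to represent $\vp_\g(1)$ as the expected boundary value of $\vp_\g$ on the rectangle $R$ from Observation~\ref{obs:B-motion-hits-side}, evaluated under a planar Brownian motion started at $1$, and then to bound this expectation below using non-negativity of $\vp_\g$ on $\partial R$ together with the probabilistic estimate of that observation. First, one applies Lemma~\ref{lem:decreasing} to deduce that $u$ is rotationally decreasing on $S(\delta/2)$, and then (combining Observation~\ref{obs:basic-vp-facts} with the symmetry of $u$) concludes that $\vp_\g$ is harmonic on $S(-\delta,\delta-\g) \supset R$ and non-negative on a sub-sector of $S(-\delta,\delta-\g)$ containing the rectangle $R$.

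Next, let $(B_t)_{t \geq 0}$ be a standard planar Brownian motion started at $1$, which lies in the interior of $R$, and let $T$ be its exit time from $R$. Since $\vp_\g$ is harmonic on the interior of $R$ and continuous on $\bar R$, the classical Brownian representation of solutions to the Dirichlet problem (see \cite{peres}) gives
$$\vp_\g(1) \, = \, \Ex\bigl[\vp_\g(B_T)\bigr].$$
Restricting this expectation to the event that $B_T$ lies on the top edge $\{\rho e^{i(\delta-\g)/2} : \rho \in [e^{-\delta},e^\delta]\}$ and using $\vp_\g \geq 0$ on the remaining portion of $\partial R$, one obtains
$$\vp_\g(1) \, \geq \, \left(\min_{\rho \in [e^{-\delta},e^\delta]} \vp_\g\bigl(\rho e^{i(\delta-\g)/2}\bigr)\right) \cdot \PP\bigl(\arg(B_T) = (\delta-\g)/2\bigr).$$
Applying Observation~\ref{obs:B-motion-hits-side} to bound the last probability below by $c_\eta$ concludes the argument.

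The main technical obstacle is to verify non-negativity of $\vp_\g$ on the \emph{entirety} of $\partial R$. For $\t \in [-\g/2, \delta/2-\g]$, symmetry of $u$ reduces the claim to $u(\rho e^{i|\t|}) \geq u(\rho e^{i(\t+\g)})$ with both arguments in $[0,\delta/2]$ and $|\t| \leq \t+\g$, so rotational-decreasing on $S(\delta/2)$ suffices. However, the top strip $\t \in (\delta/2-\g, (\delta-\g)/2]$ would require comparing $u$ at arguments up to $(\delta+\g)/2$, which lies outside $S(\delta/2)$; here one should invoke a refined version of Lemma~\ref{lem:decreasing} — for instance, applying \cite[Lemma~4.1]{clt2} with a small positive value of the parameter $b$ to extend rotational-decreasing to a sector strictly larger than $S(\delta/2)$ — or alternatively shrink the rectangle at the top by $\g/2$, run the Brownian argument there, and compare the resulting bound to $\vp_\g$ at argument $(\delta-\g)/2$ by a further elementary harmonic-measure estimate. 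Either route incurs only an $\eta$-dependent factor that can be absorbed into $c_\eta$.
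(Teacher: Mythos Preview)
Your argument is exactly the paper's: apply Lemma~\ref{lem:decreasing} and Observation~\ref{obs:basic-vp-facts} to get $\vp_\g$ harmonic and non-negative on $R$, write $\vp_\g(1)=\E[\vp_\g(B_T)]$, drop all of $\partial R$ except the top edge by non-negativity, and invoke Observation~\ref{obs:B-motion-hits-side}. You are in fact more careful than the paper on one point: the paper simply asserts that non-negativity of $\vp_\g$ on all of $R$ follows from Observation~\ref{obs:basic-vp-facts}, whereas you correctly note that rotational monotonicity on $S(\delta/2)$ alone (which is all Lemma~\ref{lem:decreasing} yields) only gives $\vp_\g\geq 0$ on $S(-\g/2,\delta/2-\g)$, leaving the thin strip $\theta\in(\delta/2-\g,(\delta-\g)/2]$ unaccounted for; your suggested patches (appealing to the $b>0$ version of \cite[Lemma~4.1]{clt2}, or shrinking the rectangle and paying an extra $\eta$-dependent harmonic-measure factor) are both adequate and cost only a constant depending on $\eta$.
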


\begin{proof}
	First note that since $u$ is a weakly-positive, symmetric, harmonic function on $S(\delta)$ with logarithmic growth, then we can apply Lemma~\ref{lem:decreasing} to learn that $u$ is rotationally decreasing in $S(\delta/2)$.  Thus, by Observation~\ref{obs:basic-vp-facts}, we see that $\vp_\g$ is harmonic and non-negative in the region 
	$$R= \{ \rho e^{i\t} : \rho \in [e^{-\delta},e^\delta], \t \in [-\gamma/2,  (\delta - \g)/2]  \}. $$
	We now use the Brownian motion interpretation of harmonic functions to write 
	\begin{equation} \label{eq:vp-boundary} \vp_\g(1) = \E\, \vp_\g(B_T  )\,,     \end{equation}
	where $B_t$ is a standard planar Brownian motion started at $1 \in R$ and $T= \min \{t : B_t \in \partial R\}$. 
	Using Observation~\ref{obs:B-motion-hits-side}, \eqref{eq:vp-boundary} and the fact that $\vp_{\g}(B_T)$ is non-negative in $R$ allows us to bound 
	\[ \vp_\g(1)  
	\geq \PP\left( \arg(B_T) = \frac{\delta - \g}{2} \right) \min_{\rho \in [e^{-\delta},e^{\delta}]} \vp_\g(\rho e^{i(\delta - \g)/2} ) 
	\geq c_{\eta} \min_{\rho \in [e^{-\delta},e^{\delta}]} \vp_\g(\rho e^{i(\delta - \g)/2} ),\] 
	as desired.\end{proof}

\subsection{Obtaining an integral form} 

We now take our second step towards Lemma~\ref{lem:phi-at-least-H_r} by obtaining an integral form for the function $\vp_{\g}$. To state this, let $\tau \in (0,\pi)$, set $g  =  \pi/\tau$, and then for each $z \in S(0,\tau)$ define the function $P_{z,\tau} :\R_{\geq 0} \rightarrow \R_{\geq 0}$ by 
\begin{equation} \label{eq:Def-of-P-Ker} P_{z,\tau}(t) = \frac{g t^{g - 1} \Im(z^g)}{\pi | z^g - t^g|^2}\,. 
\end{equation} The following Lemma gives us our desired integral form of the function $\vp_{\g}$ when evaluated at points of the form $\rho e^{i(\tau - \gamma)/2}$.

\begin{lemma}\label{lem:IntFormForDiff} For $\tau \in (0,\pi)$ and $\g \in (0,\tau/2)$,
	let $u$ be a harmonic function on a neighborhood of $S(0,\tau)$ that has logarithmic growth. Then for $z(\rho) = \rho e^{i(\tau - \g)/2}$ we have 
	\begin{equation} \label{equ:diffofUs} 
		\vp_{\g}(z(\rho)) = \int_{0}^{\infty} ( u(t) - u( te^{i\tau}) )P_{z,\tau/2}(t)\, dt,
	\end{equation} where $P_{z,\tau/2}(t)$ is defined at \eqref{eq:Def-of-P-Ker}.\end{lemma}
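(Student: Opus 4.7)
The plan is to apply the Poisson integral representation for $u$ on the sector $S(0,\tau)$, evaluate it at the pair of points $z(\rho) = \rho e^{i(\tau-\g)/2}$ and $e^{i\g}z(\rho) = \rho e^{i(\tau+\g)/2}$, and subtract, exploiting a reflection symmetry between these points to collapse the two boundary contributions into a single one featuring $u(t) - u(te^{i\tau})$.

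First I would apply the conformal map $\phi(z) = z^{g}$ with $g = \pi/\tau$, which sends $S(0,\tau)$ onto the upper half plane $\HH$, fixing the positive real axis and sending the ray $\{te^{i\tau}\}$ to the negative real axis. The pullback $U := u \circ \phi^{-1}$ is harmonic on a neighborhood of $\overline{\HH}$ and inherits sub-linear growth from the logarithmic growth of $u$. Hence $U$ is recovered by its Poisson integral in $\HH$, and changing variables $x = \pm t^g$ on the two halves of $\R$ gives
\begin{equation*}
u(z) = \int_0^\infty u(t)\,K_1(z,t)\,dt + \int_0^\infty u(te^{i\tau})\,K_2(z,t)\,dt,
\end{equation*}
where $K_1(z,t) = \frac{g t^{g-1}\Im(z^g)}{\pi|z^g - t^g|^2}$ and $K_2(z,t) = \frac{g t^{g-1}\Im(z^g)}{\pi|z^g + t^g|^2}$.

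Next I would exploit the crucial identity $e^{i\g}z(\rho) = e^{i\tau}\overline{z(\rho)}$, which says that $z(\rho)$ and $e^{i\g}z(\rho)$ are reflections across the bisector $\arg = \tau/2$ of the sector. Raising to the $g$-th power yields $(e^{i\g}z)^g = -\overline{z^g}$, so $\Im((e^{i\g}z)^g) = \Im(z^g)$ and $|(e^{i\g}z)^g \pm t^g|^2 = |z^g \mp t^g|^2$. Consequently $K_1(e^{i\g}z,t) = K_2(z,t)$ and $K_2(e^{i\g}z,t) = K_1(z,t)$, and subtracting the two Poisson representations gives
\begin{equation*}
\vp_\g(z(\rho)) = \int_0^\infty \bigl(u(t) - u(te^{i\tau})\bigr)\bigl(K_1(z(\rho),t) - K_2(z(\rho),t)\bigr)\,dt.
\end{equation*}

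Finally I would simplify $K_1 - K_2$ algebraically. Using the identity $\tfrac{1}{|A - B|^2} - \tfrac{1}{|A + B|^2} = \tfrac{4B\,\Re A}{|A^2 - B^2|^2}$ with $A = z^g$, $B = t^g > 0$, together with $2\Re(z^g)\Im(z^g) = \Im(z^{2g})$, I obtain
\begin{equation*}
K_1(z,t) - K_2(z,t) = \frac{2g\, t^{2g-1}\, \Im(z^{2g})}{\pi\,|z^{2g} - t^{2g}|^2}.
\end{equation*}
Setting $G := 2g = \pi/(\tau/2)$, this coincides exactly with the formula for $P_{z,\tau/2}(t)$ at \eqref{eq:Def-of-P-Ker}, completing the identification. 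The main obstacle is the first step: justifying the Poisson representation for a merely logarithmically growing harmonic function. I would handle this either by a direct appeal to the (standard but slightly delicate) fact that harmonic functions on $\HH$ with sub-linear growth are recovered by their Poisson integrals, or by applying Poisson on a large bounded subsector of $S(0,\tau)$ and passing to the limit, using the log bound on $u$ and the decay of the explicit kernels at $0$ and $\infty$ to discard the arc contributions. Once this is in hand, the remaining steps are purely symmetric and algebraic.
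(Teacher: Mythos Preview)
Your proof is correct but follows a different route from the paper's. The paper applies Poisson integration on the two \emph{half}-sectors $S(0,\tau/2)$ and $S(\tau/2,\tau)$ separately, writing $u(z_1)$ and $u(z_2)$ each as an integral over two rays; the reflection symmetry $\overline{e^{-i\tau/2}z_2}\,e^{i\tau/2} = z_1$ then makes the contributions along the bisecting ray $\arg = \tau/2$ cancel exactly upon subtraction, and the kernel $P_{z,\tau/2}$ appears directly as the Poisson kernel of the half-sector with no further manipulation. You instead integrate once over the \emph{full} sector $S(0,\tau)$, use the same reflection (in the form $(e^{i\g}z)^g = -\overline{z^g}$) to swap the two boundary kernels $K_1 \leftrightarrow K_2$, and then need an extra algebraic step to collapse $K_1 - K_2$ into $P_{z,\tau/2}$. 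Your version avoids splitting the domain and is arguably more streamlined conceptually, while the paper's version is cleaner at the endpoint since no kernel identity is required. Both rest on the same analytic input---the Poisson representation for a logarithmically growing harmonic function on a sector---which the paper proves carefully (their Lemma~\ref{lem:poisson-integration-sector}) via a truncation argument using a Brownian-motion exit estimate; your sketch of this step is adequate but would benefit from citing or reproducing that justification.
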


Here, we require only two further properties of $P_{z,\tau/2}$, that $P_{z,\tau/2}(t) \geq 0$ for all $t >0$ and the following basic estimate on its growth.

\begin{lemma} \label{lem:Ratio-PKernel-and-ts}
	For $M \geq 1$, $\tau \in (0,\pi)$ and $\eta \in (0,1/2)$ let $\g = \eta \tau$. 
	Then for $s \in (0,1)$ $$\min_{M^{-1} \leq t \leq M}  \min_{e^{-\tau }\leq \rho \leq e^\tau } \{ t^{1 + s}P_{z(\rho),\tau/2}(t)  \} \geq c_{\eta} \tau^{-1} M^{-2\pi/\tau - s}$$
	where $z(\rho) = \rho e^{i(\tau - \g)/2}$ and $c_\eta > 0$ is a constant depending only on $\eta$. \end{lemma}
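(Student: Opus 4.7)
The plan is to reduce the minimization to a single elementary inequality by a convenient change of variables. First I would unwind the definitions: inside $P_{z,\tau/2}$ the relevant exponent is $g = \pi/(\tau/2) = 2\pi/\tau$, and since $g(\tau-\gamma)/2 = \pi(1-\eta)$ one has $z(\rho)^g = \rho^g e^{i\pi(1-\eta)}$. Using $\cos(\pi(1-\eta)) = -\cos(\pi\eta)$ this gives
$$|z(\rho)^g - t^g|^2 = \rho^{2g} + 2\rho^g t^g\cos(\pi\eta) + t^{2g}, \qquad \Im(z(\rho)^g) = \rho^g\sin(\pi\eta).$$
Substituting into the definition of $P_{z,\tau/2}$ and dividing numerator and denominator of $t^{1+s}P_{z(\rho),\tau/2}(t)$ by $\rho^g t^g$ then yields the clean formula
$$t^{1+s}P_{z(\rho),\tau/2}(t) = \frac{g\sin(\pi\eta)\, t^s}{\pi\bigl[w + 2\cos(\pi\eta) + w^{-1}\bigr]}, \qquad w := (t/\rho)^g.$$

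The key observation is that after this substitution the $t$- and $\rho$-dependence of the denominator is \emph{symmetric} and easy to control uniformly. For $t \in [M^{-1},M]$ and $\rho \in [e^{-\tau},e^\tau]$ we have $w,\, w^{-1} \in [e^{-2\pi}M^{-g},\, e^{2\pi}M^g]$, so both $w$ and $w^{-1}$, as well as the constant $2\cos(\pi\eta)$ (using $M \geq 1$), are bounded above by a constant multiple of $M^g$. Since $\eta \in (0,1/2)$, $\cos(\pi\eta) > 0$ ensures the bracket is positive, and $\sin(\pi\eta)$ is bounded below by a constant depending only on $\eta$.

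Combining this upper bound on the denominator with the trivial lower bound $t^s \geq M^{-s}$ in the numerator, and inserting $g = 2\pi/\tau$, I would conclude
$$t^{1+s}P_{z(\rho),\tau/2}(t) \;\geq\; \frac{c_\eta}{\tau\, M^{2\pi/\tau + s}}$$
uniformly over the minimization range, which is exactly the claimed bound. There is no real obstacle here beyond noticing the symmetrizing substitution $w = (t/\rho)^g$; once that change of variables is in place, the bounds are routine and the factors $\tau^{-1}$ and $M^{-2\pi/\tau}$ emerge directly from $g/\pi = 2/\tau$ and the size of the denominator, respectively.
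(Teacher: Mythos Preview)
Your argument is correct. The computation of $z(\rho)^g$, the identity $\Im(z(\rho)^g)=\rho^g\sin(\pi\eta)$, and the expansion of $|z(\rho)^g-t^g|^2$ all check out, and once you divide through by $\rho^g t^g$ the bound follows exactly as you describe.

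Compared with the paper's proof, the underlying estimate is the same but your packaging is a little cleaner. The paper first uses the homogeneity $P_{z,\tau/2}(t)=\sigma^{-1}P_{z/\sigma,\tau/2}(t/\sigma)$ to reduce to $\rho=1$ (at the cost of enlarging the $t$-range to $[(Me^\tau)^{-1},Me^\tau]$), and then treats the two regimes $t\ge 1$ and $t\le 1$ separately, bounding the quotient $t^{g+s}/|z^g-t^g|^2$ in each. Your substitution $w=(t/\rho)^g$ absorbs the $\rho$-dependence and the $t\leftrightarrow 1/t$ symmetry in one stroke, so that the denominator becomes $w+2\cos(\pi\eta)+w^{-1}$ and no case split is needed. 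Both routes yield the same bound $c_\eta\,\tau^{-1}M^{-2\pi/\tau-s}$; yours is slightly more economical.
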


The idea behind Lemma \ref{lem:IntFormForDiff} is to use the connection between harmonic functions and Brownian motion to write $u(\rho e^{i(\tau - \gamma)/2})$ as an integral over the boundary of $S(0,\tau/2)$ and $u(\rho e^{i(\tau + \gamma)/2})$ as an integral over the boundary of $S(\tau/2,\tau)$.  By symmetry, the contribution of integrals along the ray $\arg(z) = \tau/2$ will precisely cancel when we take the difference $u(\rho e^{i(\tau - \gamma)/2}) - u(\rho e^{i(\tau + \gamma)/2})$, giving the identity Lemma~\ref{lem:IntFormForDiff}.  
We postpone the proofs of Lemmas~\ref{lem:IntFormForDiff} and \ref{lem:Ratio-PKernel-and-ts} to Appendix~\ref{sec:mellin-appendix}, as the details are not particularly interesting and distract somewhat from the main course of our proof. For now, we see how these pieces fit together to prove Lemma~\ref{lem:phi-at-least-H_r}.

\vspace{4mm}
\begin{proof}[Proof of Lemma~\ref{lem:phi-at-least-H_r}]  Let $\tau \in (\delta/2,\delta)$.	For $z = \rho e^{i(\tau - \g)/2}$ we apply Lemma~\ref{lem:IntFormForDiff} and write
	\begin{equation} \label{equ:Poisson-form-of-vp} 
		\vp_\g(z) = \int_{0}^\infty (u(t) - u(te^{i\tau})) P_{z,\tau/2}(t)\,dt\,. \end{equation}
	We now make crucial use of weak-positivity \eqref{eq:weak-positivity} (that is $u(t) - u(te^{i\tau}) \geq 0$) and the fact that $P_{z,\tau/2}(t) \geq 0$, to write 
	\[  \vp_\g(z) \geq \int_{M^{-1}}^M(u(t) - u(te^{i\tau})) P_{z,\tau/2}(t)\,dt .
	\] 
	An application of Lemma~\ref{lem:vp-compare} together with weak-positivity and Lemma~\ref{lem:Ratio-PKernel-and-ts} gives \begin{align*}
		\vp_\g(1) &\geq c_\eta \min_{e^{-\tau} \leq \rho \leq e^\tau} \int_{M^{-1}}^M (u(t) - u(te^{i\tau})) P_{z,\tau/2}(t)\,dt \\
		&\geq c_\eta \min_{M^{-1} \leq t \leq M} \min_{e^{-\tau} \leq \rho \leq e^\tau} \{ t^{1 + s} P_{z,\tau/2}(t) \} \cdot \int_{M^{-1}}^M (u(t) - u(t e^{i\tau})) t^{-(1+ s)}\,dt \\
		&\geq c_{\eta}c'_{\eta} \cdot \tau^{-1} M^{-2\pi/\tau - s} H_{M,\tau}(s)\,.
	\end{align*}Taking $\tau \uparrow \delta$ completes the Lemma.
\end{proof}

\section{A Mellin Transform Calculation} \label{sec:mellin-comp}

The goal of this short section is to compute $H_{M,\tau}(s)$ when $M = +\infty$ and $s \rightarrow 0$. For this, we define
\[ L_{\z,\tau}(t) := 2\log \left|1 - t\z^{-1} \right| - \log \left|1 - e^{i\tau}t \z^{-1} \right| - \log\left| 1 - e^{-i\tau}t \z^{-1} \right|,\]
for $\tau \in (0,\pi)$ and $\z \in \C \setminus \{0\}$, 

Our main goal of this section will be to show the following.

\begin{lemma}\label{lem:whole-log-int} For $\tau \in (0,\pi)$, $\z \in \C \setminus \{0\}$ with $|\arg(\z)| > \tau$ and $s \in (0,1)$, we have 
	\begin{equation} \label{eq:L-whole-int} \int_0^{\infty} L_{\z,\tau}(t) t^{-(s+1)}\, dt = \tau^2+ o_{s \rightarrow 0}(1). \end{equation}
\end{lemma}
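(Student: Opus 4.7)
My plan is to reduce the integral to a classical Mellin transform and then Taylor-expand in $s$. First, I substitute $t = |\z| u$: this pulls out a factor $|\z|^{-s}$ (which equals $1 + o_{s \to 0}(1)$) and reduces to the unit-modulus case. Since each summand in $L_{\z,\tau}$ depends on $\z$ only through a modulus of the form $|1 - t e^{\pm i\tau}/\z|$, which is invariant under $\z \mapsto \bar\z$, I may further assume $\theta := \arg(\z) \in (\tau, \pi]$. Introducing the auxiliary one-parameter Mellin transform
$$I(\phi) := \int_0^\infty \log|1 - u e^{i\phi}|\, u^{-(s+1)}\, du$$
and using its evenness $I(\phi) = I(-\phi)$ (since $|1 - u e^{i\phi}|^2 = 1 - 2u \cos\phi + u^2$ depends on $\phi$ only through $\cos\phi$), the integral in the statement of the lemma can be rewritten as
$$|\z|^{-s} \left[\, 2 I(\theta) - I(\theta - \tau) - I(\theta + \tau) \,\right].$$

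The computational heart of the argument is evaluating $I(\phi)$. Starting from the identity $2\log|1 - u e^{i\phi}| = \log(1 - 2u\cos\phi + u^2)$, I would integrate by parts against $d(-u^{-s}/s)$; the boundary terms vanish for $0 < s < 1$ because the logarithm is $O(u)$ near the origin and $O(\log u)$ near infinity. The resulting integrand factors by partial fractions as
$$\frac{2(u - \cos\phi)}{1 - 2u\cos\phi + u^2} = \frac{1}{u - e^{i\phi}} + \frac{1}{u - e^{-i\phi}}.$$
Applying the classical Beta-function identity $\int_0^\infty u^{-s}/(u+a)\, du = \pi a^{-s}/\sin(\pi s)$ (valid for $0 < s < 1$ and $|\arg a| < \pi$, with the principal branch) to $a = -e^{\pm i\phi}$ then yields
$$I(\phi) = \frac{\pi \cos(s(\pi - \phi))}{s\, \sin(\pi s)},$$
a formula which, by evenness of $\cos$, continues to hold on $(0,2\pi)$ and thus also covers the wrap-around case $\theta + \tau > \pi$.

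To finish, the product-to-sum identity $\cos(A - B) + \cos(A + B) = 2\cos A\,\cos B$ collapses the bracketed quantity to
$$\frac{2\pi \cos(s(\pi - \theta))\, [\,1 - \cos(s\tau)\,]}{s \sin(\pi s)}.$$
Using $1 - \cos(s\tau) = s^2 \tau^2 / 2 + O(s^4)$ and $\pi/(s\sin(\pi s)) = 1/s^2 + O(1)$ as $s \to 0$, this expression is $\tau^2 + O(s^2)$. Multiplying by $|\z|^{-s} = 1 + O(s)$ gives $\tau^2 + o_{s \to 0}(1)$, as required.

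The main technical point I anticipate is the branch-tracking in the Beta-function identity -- one has to check that $a = -e^{i\phi}$ with $\phi \in (0,\pi)$ satisfies $|\arg a| = \pi - \phi < \pi$, so that the principal-branch value $a^{-s} = e^{is(\pi - \phi)}$ is the correct one, and similarly for $a = -e^{-i\phi}$. The remaining steps (the integration by parts and its boundary terms, the partial fraction, the trigonometric collapse, and the Taylor expansion) are all routine.
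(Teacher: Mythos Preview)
Your proposal is correct and follows essentially the same route as the paper: reduce to unit modulus by substitution, evaluate the Mellin transform $I(\phi) = c_s\cos(s(\pi-\phi))$, and Taylor-expand in $s$. The only differences are cosmetic --- the paper quotes the identity for $I(\phi)$ from a table of Mellin transforms rather than deriving it via integration by parts and the Beta integral, and it Taylor-expands the three cosines separately rather than first collapsing them with the product-to-sum identity; your version is, if anything, slightly cleaner.
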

This calculation is implicit in the work of Eremenko and Fryntov~\cite{eremenko-fryntov} and 
begins to reveal the special structure that emerges when $s$ is small. Indeed, angular information about the root $\z$ is lost as we send $s \rightarrow 0$. 

We also note the connection between $L_{\z,\tau}$ and our function $H_{M,\tau}$.

\begin{lemma}\label{lem:L-form}  Let $X \in \{0,\ldots,n\}$ be a random variable with probability generating function $f_X$.
	For all $M \geq 1$ and $\tau >0$, we have
	$$H_{M,\tau}(s) = \frac{1}{2} \sum_{\zeta} \int_{1/M}^M L_{\z,\tau}(t) t^{-(s+1)}\, dt ,$$
	where is the sum is over the roots $\{\z\}$ of $f_X$.
\end{lemma}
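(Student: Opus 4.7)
The plan is to verify the identity by a direct algebraic computation, starting from the factorization of $f_X$ over its roots. Since $f_X$ is a polynomial, we may write $f_X(z) = c \prod_{\zeta}(z - \zeta)$ where the product runs over roots (with multiplicity) and $c$ is the leading coefficient. Taking logarithms of absolute values and then subtracting, for $t > 0$ we get
$$u(t) - u(te^{i\tau}) = \sum_\zeta \bigl( \log|t - \zeta| - \log|e^{i\tau}t - \zeta| \bigr).$$
Since $t - \zeta = -\zeta(1 - t\zeta^{-1})$ and $e^{i\tau}t - \zeta = -\zeta(1 - e^{i\tau}t\zeta^{-1})$, the factor $\log|\zeta|$ cancels in each summand and we obtain
$$u(t) - u(te^{i\tau}) = \sum_\zeta \bigl( \log|1 - t\zeta^{-1}| - \log|1 - e^{i\tau}t\zeta^{-1}| \bigr). \tag{$\ast$}$$

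The key next step is to recognize where the extra ``conjugate'' term $\log|1 - e^{-i\tau}t\zeta^{-1}|$ appearing in $L_{\zeta,\tau}$ comes from: it is a consequence of the fact that $f_X$ has real coefficients, so that its roots come in complex conjugate pairs. Pairing $\zeta$ with $\bar\zeta$ in the sum $(\ast)$ and using
$$|1 - e^{i\tau}t\bar\zeta^{-1}| = |\overline{1 - e^{-i\tau}t\zeta^{-1}}| = |1 - e^{-i\tau}t\zeta^{-1}|,$$
we may symmetrize the summand to
$$\sum_\zeta \log|1 - e^{i\tau}t\zeta^{-1}| = \sum_\zeta \tfrac{1}{2}\bigl(\log|1 - e^{i\tau}t\zeta^{-1}| + \log|1 - e^{-i\tau}t\zeta^{-1}|\bigr),$$
(real roots contribute both halves trivially since $\bar\zeta = \zeta$). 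Substituting back into $(\ast)$ rearranges the summand into precisely $\tfrac{1}{2}L_{\zeta,\tau}(t)$, yielding the pointwise identity
$$u(t) - u(te^{i\tau}) = \tfrac{1}{2}\sum_\zeta L_{\zeta,\tau}(t).$$

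To conclude, I multiply by $t^{-(s+1)}$, integrate over $t \in [1/M, M]$, and swap the (finite) sum with the integral to obtain the claimed formula. There is essentially no real obstacle here beyond bookkeeping: the sum over roots is finite, so interchange requires no justification; the only subtlety is noting that the identity holds pointwise for $t > 0$ as long as the logarithms are finite, i.e. away from the measure-zero set where $t = e^{\pm i\tau} \zeta$ for some root $\zeta$, which is harmless for integration. In short, the lemma is really an unpacking of the factorization of $f_X$ combined with the reality of its coefficients, and does not require any of the analytic machinery built up so far.
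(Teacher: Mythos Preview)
Your proof is correct and rests on the same two ingredients as the paper's proof: the root factorization of $f_X$ and the fact that $f_X$ has real coefficients. The execution differs slightly. The paper first invokes the symmetry $u(e^{i\tau}t)=u(e^{-i\tau}t)$ at the level of the logarithmic potential to write $u(t)-u(e^{i\tau}t)=\tfrac12\bigl(2u(t)-u(e^{i\tau}t)-u(e^{-i\tau}t)\bigr)$, and only then plugs in the root expansion \eqref{eq:exp-u-with-roots}; because that expansion is split according to $|\zeta|<1$ versus $|\zeta|\geq 1$, the paper also needs the identity $L_{\zeta,\tau}(t)=L_{\zeta^{-1},\tau}(1/t)$ to recombine the two pieces. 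You instead expand in roots first via the straightforward factorization $f_X(z)=c\prod_\zeta(z-\zeta)$ and then symmetrize the resulting sum using the conjugate-pair structure of the roots. This avoids both the $|\zeta|\lessgtr 1$ split and the auxiliary identity, and is a bit more direct; conceptually, though, the two routes are the same argument written in a different order.
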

\begin{proof} Set $u = u_X$ to be the logarithmic potential of $X$. The symmetry property of $u$ implies 
	\[ u(t) - u(e^{i\tau}t) = \frac{1}{2}\left(2u(t) - u(e^{i\tau}) - u(e^{-i\tau})\right),
	\] and then, using the definition of $H_{M,\tau}(s)$, we write 
	\begin{equation} \label{eq:HR-in-bound} H_{M,\tau}(s) = \frac{1}{2}\int_{1/M}^M \left( 2u(t) - u(e^{i\tau}t) - u(e^{-i\tau}t)\right)t^{-(s+1)}\, dt. \end{equation}
	Now, using the expansion of $u$ in terms of its roots (as we noted at \eqref{eq:exp-u-with-roots}) gives
	\[ u(z) = \sum_{|\zeta| < 1} \log\left|1 - \frac{\zeta}{z} \right| + \sum_{|\z| \geq 1 }\log\left|1 - \frac{z}{\zeta}\right| + N_X\log|z| + c_X,
	\] which allows us to write
	\begin{equation}\label{eq:u-diff-as-L-sum}  2u(t) - u(e^{i\tau}t) - u(e^{-i\tau}t) = \sum_{|\z| \geq 1} L_{\z,\tau}(t) + \sum_{|\z| < 1} L_{\z^{-1},\tau}(1/t) = \sum_{\zeta} L_{\z,\tau}(t) 
	\end{equation} by using the identity $L_{\zeta,\tau}(t) = L_{\zeta^{-1},\tau}(1/t)$.  Using \eqref{eq:u-diff-as-L-sum} in \eqref{eq:HR-in-bound} and swapping the sum and integral completes the proof.
\end{proof}

\vspace{2mm}

To aid in our calculation we define (abusing notation slightly), for $\t \in \R$ and $\tau \in (0,\pi)$,
\begin{equation} \label{eq:def-of-Lt} L_{\t,\tau}(t) := 2\log \left|1 - e^{i\theta}t \right| - \log \left|1 - e^{i(\theta + \tau)}t\right| - \log\left| 1 - e^{i(-\theta  +\tau)}t \right|. \end{equation} We also define the function $\phi_s(\t)$ by first setting $\phi_s(\t) = \cos(s(\t - \pi))$, for $\t \in [0,2\pi]$ and then extend this function periodically to all of $\R$. That is, we define $\phi_s(\t) := \cos(s(\t - \pi - 2k\pi))$, for all $\t \in [2\pi k,2\pi(k+1)]$ and all $k \in \Z$. We note the following fact before moving on to the proof of Lemma~\ref{lem:whole-log-int}.
\begin{fact} \label{fact:int-formula}
	For $\theta \in \R$ and $s \in (0,1)$ we have 
	$$\int_{0}^\infty \log\left|1 - e^{i\theta} t \right| t^{-(s+1)}\,dt = c_s \phi_s(\t),   $$
	where $c_s = \pi/ (s \sin(\pi s))$.  
\end{fact}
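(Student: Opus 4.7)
My plan is to establish the identity by expanding both sides as Fourier cosine series in $\theta$ on $[0, 2\pi]$ and matching coefficients. First observe that $\phi(\theta) := \int_0^\infty \log|1 - e^{i\theta}t|\,t^{-s-1}\,dt$ converges absolutely for $s \in (0,1)$: the integrand is $O(t^{-s})$ near $0$, $O(\log(t)\,t^{-s-1})$ at infinity, and has only an integrable logarithmic singularity near $t = 1$. Moreover $\phi$ is even (since complex conjugation preserves $|\cdot|$) and $2\pi$-periodic, matching the symmetries of $c_s\phi_s$, so it suffices to verify the identity on $[0, 2\pi]$.

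Next I would split the integral at $t = 1$. For $t \in (0,1)$, use the expansion $\log|1 - e^{i\theta}t| = -\sum_{n\geq 1} t^n\cos(n\theta)/n$; for $t > 1$, factor out $-e^{i\theta}t$ to get $\log|1 - e^{i\theta}t| = \log t - \sum_{n\geq 1}\cos(n\theta)/(nt^n)$. The absolute majorants $-\log(1-t)$ and $-\log(1 - 1/t)$, multiplied by $t^{-s-1}$, give integrable functions, so I may swap sum and integral. Using the elementary identities $\int_0^1 t^{n-s-1}\,dt = 1/(n-s)$, $\int_1^\infty t^{-n-s-1}\,dt = 1/(n+s)$, and $\int_1^\infty \log(t)\,t^{-s-1}\,dt = 1/s^2$, I obtain
\[ \phi(\theta) = \frac{1}{s^2} - \sum_{n \geq 1}\frac{2\cos(n\theta)}{n^2 - s^2}. \]

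The remaining step is to show the Fourier cosine series of $c_s\cos(s(\theta - \pi))$ on $[0, 2\pi]$ is the same series. After substituting $u = \theta - \pi$, the constant term is $(c_s/2\pi)\int_{-\pi}^\pi \cos(su)\,du = c_s\sin(s\pi)/(s\pi) = 1/s^2$, and using $\cos(n(u+\pi)) = (-1)^n\cos(nu)$ together with the product-to-sum identity, the $n$-th cosine coefficient evaluates to
\[ \frac{(-1)^n c_s}{\pi}\int_{-\pi}^\pi \cos(su)\cos(nu)\,du = \frac{(-1)^n c_s}{\pi}\cdot (-1)^n \sin(s\pi) \cdot \frac{2s}{s^2 - n^2} = -\frac{2}{n^2 - s^2}, \]
matching the expansion for $\phi(\theta)$ above. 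Since both sides are continuous and $2\pi$-periodic, pointwise equality follows from the Fourier series representation. The proof is essentially bookkeeping: the only care needed is in justifying the term-wise integration and in tracking the signs in the trigonometric integrals, with no deeper conceptual obstacle.
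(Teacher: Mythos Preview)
Your proof is correct. The convergence and term-wise integration are properly justified, and the two Fourier computations match: writing $\phi(\theta)=\frac{1}{s^2}-\sum_{n\ge 1}\frac{2\cos(n\theta)}{n^2-s^2}$ on one side and computing the cosine coefficients of $c_s\cos(s(\theta-\pi))$ on $[0,2\pi]$ on the other yields the same series. Since the coefficients decay like $1/n^2$, the series converges uniformly to a continuous function; together with the (easy) continuity of $\phi$ at $\theta=0$ (note $|1-e^{i\theta}t|\ge |1-t|$, so $|\log|1-e^{i\theta}t||$ is dominated by an integrable function near $t=1$), this gives pointwise equality everywhere, not just a.e. That is the only place where your final sentence could be tightened.

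As for comparison with the paper: the paper does not prove this fact at all --- it simply cites a table of Mellin transforms (Oberhettinger) and the paper of Eremenko--Fryntov. Your argument therefore supplies a genuine, self-contained proof where the paper gives none. The approach you chose (expand $\log|1-e^{i\theta}t|$ as a cosine series in $\theta$, integrate term-by-term in $t$, and recognize the resulting series as the Fourier expansion of $\cos(s(\theta-\pi))$ on $[0,2\pi]$) is in fact the standard derivation of the classical partial-fractions expansion $\frac{\pi\cos(\pi s)}{\sin(\pi s)}=\frac{1}{s}+\sum_{n\ge1}\frac{2s}{s^2-n^2}$ in disguise, and is exactly what one would do to derive the table entry from scratch.
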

This identity appears as equation I.4.22 in \cite{mellin} as well as \cite{eremenko-fryntov}.

\begin{proof}[Proof of Lemma~\ref{lem:whole-log-int}] Write $\z = \rho e^{i\t}$. Changing variables, we write
	\begin{equation}\label{eq:chang-var} \int_0^{\infty} L_{\z,\tau}(t) t^{-(s+1)}\, dt  = \rho^{-s}\int_0^{\infty} L_{-\t,\tau}(t) t^{-(s+1)}\, dt
	\end{equation} and then applying Fact~\ref{fact:int-formula} gives
	\[ \int_0^{\infty} L_{-\t,\tau}(t) t^{-(s+1)}\, dt =  c_s\left(2\phi_s( - \t)  - \phi_s(\tau - \t) - \phi_s(\t+\tau)\right). 
	\] Note that since $L_{-\t,\tau} = L_{\t,\tau}$, we may assume that $\t \in [0,\pi]$. So, the periodicity of $\phi_s$ allows us to write the expression in the brackets as 
	\[ 2\phi_s(2\pi - \t)  - \phi_s(2\pi + \tau - \t) - \phi_s(\tau + \t)  
	\] which is equal to
	\begin{equation}\label{eq:mellin-L-cos} 2\cos(s(\pi - \t))  - \cos(s(\tau - \t + \pi)) - \cos(s( \tau + \t -\pi)), \end{equation}
	by the definition of $\phi_s$ and the fact that $\t \in [\tau,\pi]$, which implies that each of the arguments 
	\[ 2\pi - \t , 2\pi + \tau - \t, \tau + \t \in [0,2\pi].
	\] We now use the Taylor expansion of cosine to express \eqref{eq:mellin-L-cos} as 
	\[  s^2\tau^2  - \frac{s^4 \tau^2}{12}(6(\pi - \tau)^2 + \tau^2) + {E}_{\zeta}(s) = s^2\tau^2 + O(s^4).
	\] where $|{E}_\zeta(s)| \leq \frac{4}{6!}((s\pi)^6)$ for $s < 1$.  We now notice that 
	\[ \lim_{s \rightarrow 0} s^2c_s = \lim_{s\rightarrow 0} \frac{\pi s^2}{s \sin(\pi s)} = 1 \] and thus
	\[ \int_0^{\infty} L_{\t,t}(t) t^{-(s+1)}\, dt =  c_s\left(s^2\tau^2 + O(s^4)\right) = \tau^2 + o_{s\rightarrow 0}(1) .
	\] Putting this together with \eqref{eq:chang-var} finishes the proof for $\rho \geq 1$ after noting that $\rho^{-s} = 1 + o_{s \to 1}(1)$. The proof for $\rho\leq 1 $ is symmetric. 
\end{proof}

\section{Truncating the Mellin Transform}\label{sec:truncation}

In this section we prove the following lemma which will be essential to obtaining a lower bound on $H_{M,\tau}(s)$.

\begin{lemma} \label{lem:mainLogBound} For $\eps \in (0,1/2)$, $M \geq 1+\eps$ and $\tau \in (0,\pi)$, let $\z \in \C \setminus \{0\}$ 
	satisfy $\tau < |\arg(z)| \leq \pi$ and $M^{-1} \leq |\z| \leq M$. Then, for sufficiently small $s >0 $, we have
	\begin{equation} \label{equ:trunc-geq0} \int_{1/M}^M L_{\z,\tau}(t)t^{-(s+1)}\, dt \geq 0. \end{equation}
	If we additionally have $|\arg(\z)| \geq \pi/4$ then, for sufficiently small $s >0$, we have
	\begin{equation} \label{eq:trunc-int} \int_{1/M}^M L_{\z,\tau}(t)t^{-(s+1)}\, dt \geq  |\zeta|^{-s}2^{-10}\eps \tau^2 (1 + o_{\tau \rightarrow 0}(1)). \end{equation}
\end{lemma}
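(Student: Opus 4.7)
The plan is to combine Lemma~\ref{lem:whole-log-int}, which evaluates the full Mellin transform $\int_0^\infty L_{\zeta,\tau}(t) t^{-(s+1)}\,dt$ as $\tau^2 + o_{s \to 0}(1)$, with an explicit Fourier-type evaluation of the discarded tails $\int_0^{1/M}$ and $\int_M^\infty$ at $s = 0$. This will yield a clean closed form for the truncated integral at $s = 0$, which we then estimate.

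Writing $\zeta = \rho e^{i\theta}$ and expanding $\log|1-w| = -\Re\sum_{k\geq 1} w^k/k$ in each of the three logarithmic terms defining $L_{\zeta,\tau}$, one collects the coefficients using $\cos(k(\theta-\tau))+\cos(k(\theta+\tau)) = 2\cos(k\theta)\cos(k\tau)$ to obtain $L_{\zeta,\tau}(t) = -4\sum_{k\geq 1}(c_k/k)(t/\rho)^k$ for $t < \rho$, where $c_k := \cos(k\theta)\sin^2(k\tau/2)$, with the analogous expansion in $(\rho/t)^k$ for $t > \rho$ via the symmetry $L_{\zeta,\tau}(t) = L_{\zeta^{-1},\tau}(1/t)$. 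Splitting the interval $[1/M,M]$ at $t = \rho$ and integrating term-by-term at $s = 0$ produces
\[
\int_{1/M}^M L_{\zeta,\tau}(t)\frac{dt}{t} = -8A(1) + 4A(a) + 4A(b), \qquad A(x) := \sum_{k\geq 1}\frac{c_k x^k}{k^2},
\]
with $a := \rho/M$, $b := 1/(M\rho)$, so $ab = 1/M^2 \leq (1+\eps)^{-2}$. Applying the classical identity $\sum_{k\geq 1}\cos(k\phi)/k^2 = \pi^2/6 - \pi\phi/2 + \phi^2/4$ on $[0,2\pi]$ to $\phi = \theta, \theta\pm\tau$ (all in $(0,2\pi)$ under $\tau < |\theta| \leq \pi$), one computes $A(1) = -\tau^2/8$, giving $\int_{1/M}^M L_{\zeta,\tau}(t)\frac{dt}{t} = \tau^2 + 4[A(a)+A(b)]$. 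The $|\zeta|^{-s}$ prefactor in the statement arises from the change of variables $t \mapsto \rho u$ at nonzero $s$, and small-$s$ passage is handled by continuity since $L_{\zeta,\tau}$ is bounded on $[1/M, M]$.

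For Part 1, I claim $A(x) \geq -\tau^2/8$ on $[0,1]$. Differentiating gives $A'(x) = \frac{1}{4x}\log\bigl[|1-xe^{i(\theta-\tau)}||1-xe^{i(\theta+\tau)}|/|1-xe^{i\theta}|^2\bigr]$, and using $\cos(\theta-\tau)\cos(\theta+\tau) = \cos^2\theta - \sin^2\tau$ one finds $A'(x) \geq 0 \iff (1+x^2)\cos\theta \geq x(1+\cos\tau)$. For $\cos\theta \leq 0$ this fails for all $x > 0$, so $A$ is monotonically decreasing; for $\cos\theta > 0$ (necessarily $\theta \in (\tau, \pi/2)$), one has $A'(0^+) > 0$ and $A'(1) < 0$ (since $\theta > \tau$ implies $\cos\theta < (1+\cos\tau)/2$), so $A$ is unimodal with interior maximum. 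In either case $\min_{[0,1]} A = \min(A(0),A(1)) = -\tau^2/8$, so $A(a)+A(b) \geq -\tau^2/4$; the inequality is strict since $ab = 1/M^2 < 1$ forbids $a = b = 1$. Hence the $s = 0$ integral is strictly positive, yielding \eqref{equ:trunc-geq0} for sufficiently small $s > 0$.

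For Part 2, the hypothesis $|\theta|\geq \pi/4$ gives the uniform bound $|1-xe^{i\theta}|^2 \geq 1 - \sqrt{2}x + x^2 \geq 1/2$ on $x \in [0,1]$. This controls the higher $\theta$-derivatives of $\mathrm{Li}_2(xe^{i\theta})$ and validates the second-order Taylor expansion $A(x) = (\tau^2/4)\Phi(x) + O(\tau^4)$ uniformly in $x$, where $\Phi(x) := \Re\frac{xe^{i\theta}}{1-xe^{i\theta}} = -\tfrac12 + \frac{1-x^2}{2|1-xe^{i\theta}|^2}$. Bounding the denominator by $|1-xe^{i\theta}|\leq 1+x$ yields $\tau^2 + 4[A(a)+A(b)] \geq \tau^2(2-a-b)/4 + O(\tau^4)$, and the constraint $ab \leq (1+\eps)^{-2}$ with $a,b \leq 1$ forces $a+b \leq 1 + (1+\eps)^{-2} \leq 2-\eps$ for $\eps$ small. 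This gives a lower bound $(\eps/4)\tau^2(1+o_{\tau\to 0}(1))$, comfortably exceeding $2^{-10}\eps\tau^2(1+o_{\tau\to 0}(1))$. The main obstacle is precisely the uniform Taylor estimate: without $|\theta|\geq \pi/4$, $|1-xe^{i\theta}|$ can approach zero as $x \to 1$ and the $O(\tau^4)$ remainder is lost, which is why the $\pi/4$ threshold appears in the hypothesis.
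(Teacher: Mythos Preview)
Your approach is correct and genuinely different from the paper's. The paper works via the sign-change structure of $L_{\theta,\tau}$ (Lemma~\ref{lem:sign-change}): after the substitution $t\mapsto t/\rho$ it splits into cases according to whether the integration window $[a,b]$ straddles the sign change $\lambda$. If so, the discarded tails of $\int_0^\infty$ are non-positive and Lemma~\ref{lem:whole-log-int} gives $\geq \tau^2/2$; if not, either Lemma~\ref{lem:int-0-to-1} applies or $L\geq 0$ on all of $[a,b]$. The quantitative bound \eqref{eq:trunc-int} is then obtained from a separate local estimate near $t=1$ (Lemma~\ref{lem:small-log-bound}). You instead evaluate the truncated integral \emph{exactly} at $s=0$ through the dilogarithm, reducing everything to the single inequality $A(x)\geq A(1)=-\tau^2/8$ on $[0,1]$, which you prove by unimodality. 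It is worth noting that your identity $4xA'(x)=-L_{\theta,\tau}(x)$ shows the unimodality of $A$ is literally the sign-change assertion of Lemma~\ref{lem:sign-change}, so the structural input is the same; what differs is the packaging. Your route avoids the three-way case split and the auxiliary Lemmas~\ref{lem:int-0-to-1} and~\ref{lem:small-log-bound}, and gives a pleasant closed form, at the price of the dilogarithm identity and the small-$s$ continuity argument.

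One small gap: in Part~2 your Taylor estimate yields $\eps\tau^2/4 + O(\tau^4)$ with the $O(\tau^4)$ constant absolute, so the implied $(1+o_{\tau\to 0}(1))$ factor is $1+O(\tau^2/\eps)$, not uniform in $\eps$. The paper's route through Lemma~\ref{lem:small-log-bound} gives a uniform $1+O(\tau^2)$. This does not matter for the application in Section~\ref{sec:proof}, since there one takes $\eps_0=\max\{\eps,\delta\}\geq\delta>\tau$, whence $\tau^2/\eps_0<\tau\to 0$; but if the $o_{\tau\to 0}(1)$ in the lemma is read as uniform over $\eps\in(0,1/2)$, your argument as written does not quite deliver it. Also, your claim $1+(1+\eps)^{-2}\leq 2-\eps$ in fact holds for all $\eps\in(0,1/2)$ (it is equivalent to $1\geq\eps+\eps^2$), so the qualifier ``for $\eps$ small'' is unnecessary.
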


We will also need the following cheap bound to deal with the case when $\delta$ is bounded away from $0$.

\begin{lemma} \label{lem:mainLogBound2}
	For $\alpha \in (0,1)$ and $\tau \in (0,\pi)$ let $\z \in \C \setminus \{ 0 \}$, be such that $(\alpha M)^{-1} \leq |\z| \leq \alpha M$ and $|\arg(\zeta)| > \tau$. Then we have 
	\[ \int_{1/M}^M L_{\z}(t)t^{-(s+1)}\, dt  \geq |\zeta|^{-s} \left( \tau^2/2  + O\left( \alpha^{1-s} \right)  \right),\] for sufficiently small $s >0$. 
\end{lemma}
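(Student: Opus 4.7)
The plan is to reduce Lemma~\ref{lem:mainLogBound2} to Lemma~\ref{lem:whole-log-int} by carefully bounding the two tail integrals $\int_0^{1/M}$ and $\int_M^{\infty}$. I would first substitute $t = |\zeta| u$ and use that the Jacobian contributes exactly a factor of $|\zeta|^{-s}$, giving
\[
\int_{1/M}^M L_{\zeta,\tau}(t)\, t^{-(s+1)}\, dt \;=\; |\zeta|^{-s} \int_A^B L_{-\theta,\tau}(u)\, u^{-(s+1)}\, du,
\]
where $\theta := \arg(\zeta)$ and where the hypothesis $(\alpha M)^{-1} \leq |\zeta| \leq \alpha M$ translates to $A := 1/(M|\zeta|) \leq \alpha$ and $B := M/|\zeta| \geq 1/\alpha$. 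Since $|\arg(e^{-i\theta})| = |\theta| > \tau$, Lemma~\ref{lem:whole-log-int} gives $\int_0^{\infty} L_{-\theta,\tau}(u)\, u^{-(s+1)}\, du = \tau^2 + o_{s\to 0}(1)$, so it suffices to show the tail bound $|T_0| + |T_\infty| = O(\tau^2 \alpha^{1-s})$ with $T_0 := \int_0^A$ and $T_\infty := \int_B^{\infty}$.

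To bound $T_0$, I would use the classical expansion $\log|1 - e^{i\phi}u| = -\sum_{k \geq 1} k^{-1}\cos(k\phi) u^k$ valid on $(0,1)$, and combine the three contributions to $L_{-\theta,\tau}$ using the product-to-sum identity $\cos(k(\tau - \theta)) + \cos(k(\tau + \theta)) = 2 \cos(k\tau)\cos(k\theta)$ to derive the exact series
\[
L_{-\theta,\tau}(u) \;=\; -2 \sum_{k\geq 1} \frac{u^k}{k}\,\cos(k\theta)\bigl(1 - \cos(k\tau)\bigr).
\]
The key point is that every term inherits the factor $1 - \cos(k\tau) \leq (k\tau)^2/2$, yielding the pointwise bound $|L_{-\theta,\tau}(u)| \leq \tau^2 \cdot u/(1-u)^2$ on $(0,1)$. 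Integrating against $u^{-(s+1)}$ on $(0, A) \subseteq (0,\alpha)$ and using $(1-u)^{-2} \leq (1-\alpha)^{-2}$ then gives $|T_0| \leq \tau^2 \alpha^{1-s} / ((1-\alpha)^2(1-s))$, which is $O(\tau^2 \alpha^{1-s})$ for $s$ bounded away from $1$.

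For $T_\infty$, I would exploit the symmetry $L_{-\theta,\tau}(u) = L_{\theta,\tau}(1/u)$, which follows immediately from the identity $\log|1 - e^{i\phi}u| = \log u + \log|1 - e^{-i\phi}/u|$ (valid for $u,\phi$ real) together with the fact that the linear-in-$\log u$ terms cancel in $L$. Changing variables $u = 1/v$ turns $T_\infty$ into a left-tail integral over $(0, 1/B) \subseteq (0,\alpha)$ with an analogous $u^{s-1}$ weight, and the same series estimate gives $|T_\infty| = O(\tau^2 \alpha^{1+s})$, which is again $O(\tau^2 \alpha^{1-s})$.

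Putting the three contributions together yields $\int_A^B L_{-\theta,\tau}(u)\, u^{-(s+1)}\, du \geq \tau^2 - o_{s \to 0}(1) - O(\tau^2 \alpha^{1-s})$. For $s$ sufficiently small the $o_{s\to 0}(1)$ term is absorbed by subtracting $\tau^2/2$, and restoring the $|\zeta|^{-s}$ factor gives the claimed lower bound. The main obstacle is resisting the temptation to bound the integrand of $L_{-\theta,\tau}$ term-by-term (which loses the all-important $\tau^2$ factor on each tail); preserving the $(1 - \cos(k\tau))$ weight inside the summation is precisely what ensures the error remains small as $\tau \to 0$.
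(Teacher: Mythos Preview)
Your proof is correct and follows essentially the same route as the paper: change variables $t=|\zeta|u$, write $\int_A^B = \int_0^\infty - T_0 - T_\infty$, invoke Lemma~\ref{lem:whole-log-int} for the full integral, and bound the two tails using $A\le\alpha$, $B\ge 1/\alpha$. The only difference is in the tail estimate: the paper simply quotes the crude asymptotics $L(t)=O(t)$ as $t\to 0$ and $L(t)=O(1/t)$ as $t\to\infty$ (Observation~\ref{obs:L-stuff}) to get $|T_0|=O(\alpha^{1-s})$ and $|T_\infty|=O(\alpha^{1+s})$, whereas you work harder via the power-series identity to secure an extra factor of $\tau^2$ on each tail. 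That refinement is not needed here---the error term in the lemma is $O(\alpha^{1-s})$, not $O(\tau^2\alpha^{1-s})$---so your stated ``main obstacle'' of preserving the $(1-\cos(k\tau))$ weight is in fact no obstacle at all for this particular statement.
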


We note that this bound is sufficient to prove a weaker version of Theorem \ref{thm:Var-lower-bound} of the form $\Var(X) \geq c_\delta R^{-2\pi/\delta} n$ if one does not care about the dependence on $\delta$.

\subsection{A few preparations}
To prove Lemmas~\ref{lem:mainLogBound} and \ref{lem:mainLogBound2} we need a few useful results about the family of functions  $L_{\theta,\tau}$ first:

\begin{obs} \label{obs:L-stuff} Let $\t\in [-\pi,\pi]$ and $\tau \in (0,\pi)$.
	\begin{enumerate}
		\item \label{obs:symmetry} For $t>0$, we have $L_{\t,\tau}(t) = L_{\t,\tau}(1/t)$; 
		\item \label{obs:decay} $|L_{\t,\tau}(t)| = O(1/t)$ as $t\rightarrow \infty$ and $|L_{\t,\tau}(t)| = O(t)$ as $t \rightarrow 0$.
	\end{enumerate}
\end{obs}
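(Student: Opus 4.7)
The plan is to handle the two parts separately, both resting on elementary manipulations of $\log|1-e^{i\alpha}t|$. Part (1) is a cancellation built into the three-term linear combination defining $L_{\theta,\tau}$, and part (2) says that the same combination also kills the leading logarithmic asymptotics at both $0$ and $\infty$.

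For the symmetry in (1), I would substitute $t \mapsto 1/t$ into each summand using the identity
\[ \log|1 - e^{i\alpha}/t| = \log|1 - e^{-i\alpha}t| - \log t, \]
which holds for $t>0$ by pulling out the factor $|e^{i\alpha}/t|=1/t$ from $|1-e^{i\alpha}/t|$. The three constant $-\log t$ pieces appear in $L_{\theta,\tau}(1/t)$ with coefficients $-2, +1, +1$ and sum to zero. The surviving terms are $2\log|1-e^{-i\theta}t| - \log|1-e^{-i(\theta+\tau)}t| - \log|1-e^{i(\theta-\tau)}t|$, and since $|1-e^{-i\beta}t| = |1-e^{i\beta}t|$ for real $t$ (conjugation preserves modulus), these reassemble into $L_{\theta,\tau}(t)$ once one uses $i(\theta-\tau) = -i(\tau-\theta)$ to match the two single-log terms.

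For (2), the estimate $|L_{\theta,\tau}(t)| = O(1/t)$ as $t\to \infty$ comes from writing $\log|1 - e^{i\alpha}t| = \log t + \log|1 - e^{-i\alpha}/t|$ and Taylor expanding $\log|1-z| = -\Re(z) + O(|z|^2)$ for small $z$. Once again the $\log t$ pieces cancel with net coefficient $2 - 1 - 1 = 0$, and the linear terms combine to
\[ L_{\theta,\tau}(t) \;=\; \frac{-2\cos\theta + \cos(\theta+\tau) + \cos(\tau-\theta)}{t} + O(1/t^2) \;=\; \frac{2\cos\theta\,(\cos\tau-1)}{t} + O(1/t^2), \]
where the last step uses the identity $\cos(\theta+\tau)+\cos(\theta-\tau) = 2\cos\theta\cos\tau$. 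The $t\to 0$ bound is then immediate from the symmetry in (1): $|L_{\theta,\tau}(t)| = |L_{\theta,\tau}(1/t)| = O(t)$.

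There is no genuine obstacle here; the work is purely bookkeeping, and the only thing to watch is the signs of the phases $\pm\theta\pm\tau$. The small conceptual point worth flagging is that the coefficient vector $(2,-1,-1)$ in the definition of $L_{\theta,\tau}$ is chosen precisely so that both the scaling term $\log t$ and the linear term in $1/t$ cancel, which is what makes $L_{\theta,\tau}$ well-behaved enough for the Mellin-type integrals earlier in the paper to converge and be manipulated.
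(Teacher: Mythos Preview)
Your proof is correct; the paper states this as an Observation without proof, and your elementary manipulation of $\log|1-e^{i\alpha}t|$ is exactly the natural way to verify both parts. One small correction to your closing remark: the coefficient vector $(2,-1,-1)$ kills only the $\log t$ term, not the $1/t$ term --- indeed your own computation gives $L_{\theta,\tau}(t) = 2\cos\theta(\cos\tau-1)/t + O(1/t^2)$, which is generically nonzero at order $1/t$ --- but this is already $O(1/t)$, which is all that is claimed and all that is needed for the Mellin integrals to behave.
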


We now use the symmetry of $L_{\t,\tau}(t)$ to obtain an estimate for a truncated version of the integral featured in Lemma~\ref{lem:whole-log-int}.

\begin{lemma}\label{lem:int-0-to-1} For $\tau\in (0,\pi)$, $\tau < |\t| \leq \pi$ and $s \in (0,1)$, we have that 
	\[ \int_0^{1} L_{\t,\tau}(t)t^{-(s+1)}\, dt = \tau^2/2 + o_{s\rightarrow 0}(1). \]
\end{lemma}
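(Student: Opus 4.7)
The plan is to reduce this statement to Lemma~\ref{lem:whole-log-int}, which computes the same integrand over the entire half-line $(0,\infty)$, by exploiting the reflection symmetry $L_{\t,\tau}(t) = L_{\t,\tau}(1/t)$ recorded in Observation~\ref{obs:L-stuff}(1). The first preliminary step is to identify $L_{\t,\tau}$ with $L_{\z,\tau}$ for $\z = e^{-i\t}$: unpacking the definition of $L_{\z,\tau}$ with this $\zeta$ (and using $|1-e^{i\alpha}t| = |1-e^{-i\alpha}t|$ for real $t$ to match the third factor), one checks $L_{\t,\tau}(t) = L_{e^{-i\t},\tau}(t)$. Since $|\arg(e^{-i\t})| = |\t| > \tau$ by hypothesis, Lemma~\ref{lem:whole-log-int} applies and gives
\[ \int_0^\infty L_{\t,\tau}(t) t^{-(s+1)} \, dt = \tau^2 + o_{s \to 0}(1). \]

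Next, split this integral as $I_1(s) + I_2(s)$, where
\[ I_1(s) = \int_0^1 L_{\t,\tau}(t) t^{-(s+1)} \, dt, \qquad I_2(s) = \int_1^\infty L_{\t,\tau}(t) t^{-(s+1)} \, dt. \]
Performing the substitution $u = 1/t$ in $I_2$ and applying the symmetry $L_{\t,\tau}(1/u) = L_{\t,\tau}(u)$ yields $I_2(s) = \int_0^1 L_{\t,\tau}(u) u^{s-1}\, du = I_1(-s)$. Consequently,
\[ I_1(s) + I_1(-s) = \tau^2 + o_{s \to 0}(1). \]

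It remains to show that $I_1$ is continuous at $s=0$, which will force $I_1(0) = \tau^2/2$ and give the claim. By Observation~\ref{obs:L-stuff}(2), we have $|L_{\t,\tau}(t)| = O(t)$ as $t \to 0$, hence for $s$ in some small neighborhood of $0$ (say $|s| < 1/2$), the integrand is uniformly bounded by an integrable function on $(0,1)$, for example a constant times $t^{-1/2}$. Dominated convergence then gives continuity of $I_1$ at $0$, completing the proof.

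There is no real obstacle here beyond carefully handling the change of variables and noting that the decay of $L_{\t,\tau}$ at $0$ is strong enough to allow the analytic continuation of $I_1(s)$ through $s = 0$; the whole argument is essentially a functional-equation trick made possible by the symmetry in Observation~\ref{obs:L-stuff}(1).
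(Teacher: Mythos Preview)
Your proof is correct and follows essentially the same route as the paper: both invoke Lemma~\ref{lem:whole-log-int} for the full integral, use the symmetry $L_{\t,\tau}(t)=L_{\t,\tau}(1/t)$ to relate the two halves, and then argue that the halves agree in the limit $s\to 0$. The only cosmetic difference is that the paper bounds $|I_0-I_\infty|$ directly via $|L(t)|=O(1/t)$, whereas you phrase the same estimate as continuity of $I_1$ at $s=0$ via dominated convergence; these are equivalent formulations of the same step.
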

\begin{proof}
	Write $L(t) =  L_{\t,\tau}(t)$ and set
	\[ I_0 := \int_0^1 L(t) t^{-(s+1)}\, dt \quad\text{ and }\quad I_{\infty} := \int_1^{\infty}   L(t) t^{-(s+1)} \, dt.
	\] Applying Lemma~\ref{lem:whole-log-int} to $\z = e^{-i\t}$, we have 
	\begin{equation} \label{eq:sum-I0-Iinfty} I_0 + I_{\infty} = \int_0^{\infty} L(t) t^{-(s+1)} \, dt = \tau^2 + o_{s\rightarrow 0}(1).
	\end{equation} Now note that by changing variables $t = 1/x$ and using the symmetry $L(1/x) = L(x)$ (Observation~\ref{obs:symmetry}), we have 
	\[ I_0 = \int_1^{\infty} \frac{L(x)}{x^{1+s}} x^{2s} \, dx
	\] and thus we can see that $I_0 \approx I_{\infty}$ for small $s$. Indeed, using the fact that $L(t) = O(1/t)$ (Observation~\ref{obs:decay})
	we have
	\begin{equation} \label{eq:diff-bound} |I_0 - I_{\infty}| = \left| \int_1^{\infty} \frac{L(t)}{t^{s+1}}(t^{2s}-1) \, dt \right| 
		\leq C \int_{1}^{\infty} \frac{1}{t^{s+2}}(t^{2s}-1)\, dt, \end{equation}
	which tends to $0$ as $s \rightarrow 0$. To finish, note \eqref{eq:sum-I0-Iinfty} tells us that $I_0 + I_{\infty} \approx \tau^2$. Rearranging this gives  
	\[ |2I_0 - \tau^2| \leq \tau^2 + |I_0- I_{\infty}| + o_{s\rightarrow 0}(1) = o_{s\rightarrow 0}(1),
	\] as desired.
\end{proof}

\vspace{4mm}

The following elementary lemmas will allow us to throw away parts of the integral that are negative. 
First we note that $L_{\theta,\tau}$ undergoes at most one sign-change on $[1,\infty)$.  
When working with the sign-change $\l(\t,\tau)$, which we define in the following lemma, we subscribe to the convention that $1/\infty = 0$.

\begin{lemma} \label{lem:sign-change} Let $\tau\in (0,\pi)$.
	\begin{enumerate}
		\item \label{part:theta-small} If $\t \in (\tau,\pi/2)$, there exists a sign-change $\l = \l(\t,\tau) > 1$ for which 
		$L_{\t,\tau}(t) \geq 0$ for $t \in [1,\l]$ and $L_{\t,\tau}(t) \leq 0$, for $t \geq \l$; 
		\item \label{part:theta-big} If $\t  \in [\pi/2,\pi)$ then $L_{\t,\tau}(t) \geq 0$ for all $t > 0$. In this case, we define $\l(\t,\tau) := +\infty$.
	\end{enumerate}
\end{lemma}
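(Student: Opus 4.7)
The plan is to reduce both parts to a single elementary inequality by exact computation. Using the definition at~\eqref{eq:def-of-Lt}, I would write $L_{\t,\tau}(t) = \tfrac{1}{2}\log\bigl(a(t)^2/(b(t)c(t))\bigr)$, where
$a(t) = 1 - 2t\cos\t + t^2$, $b(t) = 1 - 2t\cos(\t+\tau) + t^2$ and $c(t) = 1 - 2t\cos(\t-\tau) + t^2$, so that the sign of $L_{\t,\tau}(t)$ matches the sign of $a(t)^2 - b(t)c(t)$. Expanding $b(t)c(t)$ using the product-to-sum identities $\cos(\t+\tau)+\cos(\t-\tau) = 2\cos\t\cos\tau$ and $\cos(\t+\tau)\cos(\t-\tau) = \cos^2\t - \sin^2\tau$, and then rewriting $1-\cos\tau = 2\sin^2(\tau/2)$ and $\sin^2\tau = 4\sin^2(\tau/2)\cos^2(\tau/2)$, the difference collapses to
\[
a(t)^2 - b(t)c(t) \;=\; 8\,t\,\sin^2(\tau/2)\,\bigl[\,2t\cos^2(\tau/2) - (1+t^2)\cos\t\,\bigr].
\]
In particular $L_{\t,\tau}(t)\ge 0$ if and only if $\cos\t \le g(t)$, where $g(t) := 2t\cos^2(\tau/2)/(1+t^2)$.

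Part~(2) is immediate from this reformulation: if $\t\in[\pi/2,\pi)$ then $\cos\t\le 0 < g(t)$ for every $t>0$, so $L_{\t,\tau}(t)\ge 0$ throughout $(0,\infty)$, and one sets $\lambda(\t,\tau) := +\infty$.

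For part~(1), with $\t\in(\tau,\pi/2)$, I would analyse $g$ on $[1,\infty)$. A short derivative computation shows that $g$ is unimodal on $(0,\infty)$, with maximum at $t=1$ where $g(1) = \cos^2(\tau/2) = (1+\cos\tau)/2$, and strictly decreasing to $0$ on $[1,\infty)$. Since $0 < \tau < \t < \pi/2$, we have $\cos\t < \cos\tau < (1+\cos\tau)/2 = g(1)$, while $g(t)\to 0$ as $t\to\infty$. By the intermediate value theorem together with strict monotonicity, there is a unique $\lambda = \lambda(\t,\tau) \in (1,\infty)$ with $g(\lambda) = \cos\t$, and moreover $g(t) \ge \cos\t$ on $[1,\lambda]$ while $g(t)\le \cos\t$ on $[\lambda,\infty)$. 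Translating back through the equivalence from the first paragraph yields exactly the sign pattern for $L_{\t,\tau}$ claimed in the lemma.

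The only nontrivial step is the algebraic identity for $a(t)^2 - b(t)c(t)$ in the first paragraph; I do not expect any real difficulty here, as it is a finite trigonometric computation, and everything downstream is an elementary one-variable monotonicity argument.
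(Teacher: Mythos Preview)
Your proposal is correct and follows essentially the same route as the paper: both arguments compute $a(t)^2 - b(t)c(t)$ (equivalently, the paper's $-A(t)$) explicitly and reduce the sign of $L_{\t,\tau}$ to the sign of this expression. The only difference is cosmetic: the paper factors $A(t) = 4(1-\cos\tau)(\cos\t)\,t\,(1 - \mu t + t^2)$ and locates the roots of the quadratic, whereas you rewrite the sign condition as $\cos\t \le g(t)$ and use the unimodality of $g$; these are equivalent one-variable analyses of the same cubic, and your version is arguably a touch cleaner.
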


We now use this sign-change to lower bound the contribution of the integral near $1$ in the case of $|\theta| \geq \pi/4$:

\begin{lemma}\label{lem:small-log-bound} 
	For $\tau \in (0,\pi)$ and $\t$ satisfying $\pi/4 < |\t| \leq \pi$, let $\eps \in (0,1/2)$ be such that $\l^{-1} \leq 1-\eps$, where $\l = \l(\t,\tau)$ is the sign change of $L_{\t,\tau}$. Then
	\[ \int_{1-\eps}^1 L_{\t,\tau}(t)t^{-(s+1)} \, dt \geq \frac{\eps \tau^2 }{2^{9}}(1 + o_{\tau \rightarrow 0}(1)). \] 
\end{lemma}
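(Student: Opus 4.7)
My plan is to compute the leading $\tau^2$ behavior of $L_{\theta,\tau}(t)$ explicitly, bound this leading expression below by a positive absolute constant on $[1-\eps,1]$, and then integrate trivially using $t^{-(s+1)}\ge 1$ for $t\le 1$, $s>0$.

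The key algebraic step will be to set $a = a(t) := (1+t^2)/(2t) \ge 1$ and use the identity $|1-e^{i\alpha}t|^2 = 2t(a-\cos\alpha)$ to rewrite
\[
L_{\theta,\tau}(t) \;=\; \frac{1}{2}\log\frac{(a-\cos\theta)^2}{(a-\cos(\theta+\tau))(a-\cos(\theta-\tau))}.
\]
Using the product-to-sum identity $\cos(\theta+\tau)\cos(\theta-\tau) = 1 - \sin^2\tau - \sin^2\theta$ I obtain
\[
(a-\cos\theta)^2 - (a-\cos(\theta+\tau))(a-\cos(\theta-\tau)) \;=\; \sin^2\tau - 2a\cos\theta(1-\cos\tau),
\]
and substituting $\sin^2\tau = \tau^2(1+O(\tau^2))$, $1-\cos\tau = \frac{\tau^2}{2}(1+O(\tau^2))$, and $(a-\cos(\theta+\tau))(a-\cos(\theta-\tau)) = (a-\cos\theta)^2 + O(\tau^2)$ will yield
\[
L_{\theta,\tau}(t) \;=\; \frac{\tau^2(1 - a\cos\theta)}{2(a-\cos\theta)^2}\bigl(1 + o_\tau(1)\bigr),
\]
uniformly on the compact set $\pi/4 \le |\theta| \le \pi$, $1/2 \le t \le 1$, which is safely away from the only singularity at $(\theta,t)=(0,1)$.

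It remains to bound the leading coefficient below. Since $\eps\le 1/2$ forces $t\in[1/2,1]$ and hence $a\in[1,5/4]$, while $|\theta|\ge\pi/4$ gives $\cos\theta\le\sqrt{2}/2$, I get
\[
1-a\cos\theta \;\ge\; 1-\tfrac{5\sqrt{2}}{8} \;>\; \tfrac{1}{10}, \qquad (a-\cos\theta)^2 \;\le\; (a+1)^2 \;\le\; \tfrac{81}{16},
\]
so the coefficient exceeds some absolute constant $c_0 > 1/2^9$. Since $t^{-(s+1)}\ge 1$ on $[1-\eps,1]$, I then conclude
\[
\int_{1-\eps}^1 L_{\theta,\tau}(t)\,t^{-(s+1)}\,dt \;\ge\; c_0\,\eps\tau^2(1+o_\tau(1)) \;\ge\; \frac{\eps\tau^2}{2^9}(1+o_\tau(1)).
\]
The one technical point to verify carefully will be the uniformity of the $o_\tau(1)$ remainder over the compact parameter range; this is routine since the relevant rational expression is smooth there. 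Note that the hypothesis $\lambda^{-1}\le 1-\eps$ does not enter my argument directly; it is consistent with (indeed, for sufficiently small $\tau$ it follows from) the positivity of the leading-order expansion on $[1-\eps,1]$.
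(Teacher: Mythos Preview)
Your proof is correct and follows essentially the same strategy as the paper: extract the leading $\tau^2$ behavior of $L_{\theta,\tau}(t)$, bound the resulting coefficient below by an absolute constant on the compact range $|\theta|\in[\pi/4,\pi]$, $t\in[1/2,1]$, and integrate using $t^{-(s+1)}\ge 1$. The paper carries this out via the decomposition $L=\tfrac12\log(1-A/B)$ together with the auxiliary bound $-A(t)\ge\tau^2(1+O(\tau^2))$ on $[1,3/2]$ (Observation~\ref{obs:bound-on-A}) after first shifting to $[1,1+\eps]$ via the symmetry $L(t)=L(1/t)$ and the hypothesis $\lambda^{-1}\le 1-\eps$, whereas your $a=(1+t^2)/(2t)$ parametrization lets you work directly on $[1-\eps,1]$ and renders both the symmetry step and the explicit use of the $\lambda$ hypothesis unnecessary.
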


As the proofs of Lemmas~\ref{lem:sign-change} and Lemma~\ref{lem:small-log-bound} are somewhat tedious, we postpone them to Appendix~\ref{sec:trunc-appendix}.

\subsection{Proofs of Lemmas~\ref{lem:mainLogBound} and ~\ref{lem:mainLogBound2}} 
We now use Lemmas~\ref{lem:int-0-to-1}, \ref{lem:sign-change} and \ref{lem:small-log-bound} to prove Lemma~\ref{lem:mainLogBound},
the main result of this section. 

\vspace{4mm}

\begin{proof}[Proof of Lemma~\ref{lem:mainLogBound}]
	Let $\tau \in (0,\pi)$ and let $\z = \rho e^{i\t}$ for $\rho >0$ and $\t$ satisfying $\tau < |\t| \leq \pi$.
	We first assume $\rho \geq 1$ and note that
	\begin{equation} \label{eq:trunc-change-of-var} \int_{1/M}^M L_{\z}(t) t^{-(s+1)}\, dt = \rho^{-s} \int_{1/(M\rho)}^{M/\rho} L_{\t,\tau}(x) x^{-(s+1)}\, dx,\end{equation}
	by the change of variables $x = t/M$. We put $a := 1/(r\rho)$, $b :=r/\rho$, $L(t) := L_{\t,\tau}$ and 
	let $\l := \l(\t,\tau)$ be the sign-change from Lemma~\ref{lem:sign-change}. We proceed in two different cases depending on whether $b \geq \l$ or $b < \l$. 
	
	\noindent \textbf{Case 1} : Assume $b \geq \l$.
	Then $L(t) \leq 0$ for all $t \geq b$ and since 
	\[ a = 1/(M\rho) \leq \rho/M = 1/b \leq 1/\l\] we have that 
	$L(t) \leq 0$ for all $0 \leq t \leq a$. As a result, we have 
	\[ \int_{a}^{b} L(t) t^{-(s+1)}\, dt \geq \int_{0}^{\infty} L(t) t^{-(s+1)}\, dt \geq \tau^2/2, 
	\] where the last inequality holds by \eqref{eq:L-whole-int}, in Lemma~\ref{lem:whole-log-int}, for sufficiently small $s$.
	
	\noindent \textbf{Case 2} : Assume now that we have $1 \leq b \leq \l$. In this case we have that $L(t) \geq 0$ for all $t \in [1,b]$. We now break into two further
	cases. If $a \leq \l^{-1}$, then for sufficiently small $s$ we have 
	\begin{equation} \label{eq:using-0-1-int} 
		\int_{a}^{b}  L(t) t^{-(s+1)} \, dt \geq \int_{0}^{b} L(t) t^{-(s+1)} \, dt  \geq \int_{0}^{1} L(t) t^{-(s+1)} \, dt  \geq \tau^2/4,\end{equation}
	where the second inequality follows from the fact that $\l \geq b \geq 1$.
	
	In the final case we have $a \geq \l^{-1}$ and hence $L(t) \geq 0$ for all $t \in [a,b]$, thus we have that 
	\[ \int_{a}^{b}  L(t) t^{-(s+1)} \, dt \geq 0.\] 
	In the case that $|\t| \geq \pi/4$, we have to additionally show \eqref{eq:trunc-int}.
	Since $r \geq 1+\eps$, we have $a \leq (1+\eps)^{-1} \leq 1 - \eps + \eps^2 \leq 1-\eps/2$, since $0 <\eps < 1/2$. Thus
	\[ \int_{a}^{b}  L(t) t^{-(s+1)} \, dt \geq \int^1_{1-\eps/2} L(t) t^{-(s+1)} \, dt \geq\frac{\eps \tau^2 }{2^{10}}(1 + o_{\tau \rightarrow 0}(1)), 
	\] by using Lemma~\ref{lem:small-log-bound}. This completes the proof when $\rho \geq 1$. The case $\rho \leq 1$ is symmetric.
\end{proof}

The proof of Lemma~\ref{lem:mainLogBound2} is a simple application of Observation~\ref{obs:decay} and Lemma~\ref{lem:whole-log-int}.

\begin{proof}[Proof of Lemma~\ref{lem:mainLogBound2}]
	Let $s\in (0,1)$, which we will take to be sufficiently small and write $\zeta = \rho e^{i\t}$.  Then we have
	\[ \int_{1/M}^M L_{\z,\tau}(t) t^{-(s+1)}\, dt =  \rho^{-s}\int_{1/(M\rho)}^{M/\rho} L_{-\t,\tau}(t) t^{-(s+1)}\, dt. \]
	Put $L(t) := L_{-\t,\tau}(t)$ and define $I_1 := \int_0^{1/(M\rho)} L(t) t^{-(s+1)}\,dt$ and $I_2 :=\int_{M/\rho}^{\infty} L(t) t^{-(s+1)}\,dt$ so that
	\[ \int_{1/(M \rho)}^{M/\rho} L(t) t^{-(s+1)}\, dt = \int_0^{\infty} L(t)t^{-(s+1)}\, dt - I_1 - I_2 = \tau^2 - I_1 - I_2 + o_{s \rightarrow 0}(1).\]
	We now use that $L(t) = O(t)$, for $t$ small, and the fact that $1/(M\rho) \leq \alpha$ to bound 
	\[ |I_1| = \left|\int_0^{1/(M\rho)} L(t)t^{-(s+1)}\, dt\right| \leq  C\int_0^{1/(\rho M)} t^{-s}\, dt = O\left({\alpha^{1-s}} \right).   \] 
	Similarly, since $|L(t)| = O(1/t)$ for $t$ large, and $\rho/M \leq \alpha $, we have 
	\[ |I_2| = \left|\int_{M/\rho}^{\infty} L(t)t^{-(s+1)}\, dt\right| \leq C \int_{M/\rho}^{\infty}t^{-(2+s)} \, dt = O\left(\alpha^{1+s}\right).  \] 
	Thus, for $s >0$ sufficiently small, the result follows from Lemma \ref{lem:whole-log-int}.
\end{proof}

\section{The Proof of Theorem~\ref{thm:Var-lower-bound} } \label{sec:proof}

We now arrive, at last, at the proof of Theorem~\ref{thm:Var-lower-bound}.  As we mentioned in the introduction, we actually prove a slightly sharper result.

\begin{theorem}\label{thm:Var-lower-bound-sharper} Let $X \in \{0,\ldots,n\}$ be a random variable with $p_0p_n >0$ and with probability generating function $f_X$
	and let  $\eps \in (0,1), \delta >0, R \geq 1 +\eps$. If the zeros $\z$ of $f_X$ satisfy $ |\arg(\z)| \geq \delta$ and $R^{-1} \leq |\z| \leq R$ then
	\[ \Var(X) \geq c \max\{\eps,\delta\} \cdot \delta^{-1} R^{-2\pi/\delta} n,\]
	where $c>0$ is an absolute constant.  
\end{theorem}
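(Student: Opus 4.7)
The plan is to assemble the three principal reductions established in Sections~\ref{sec:Var-and-phi}--\ref{sec:truncation}. Fix a small absolute constant $\eta \in (0, 2^{-6})$ and set $\g := \eta \delta$. Lemma~\ref{lem:Var-and-phi} yields $\Var(X) \geq c_1 \g^{-2} \vp_\g(1) \geq c_1' \delta^{-2} \vp_\g(1)$; Lemma~\ref{lem:phi-at-least-H_r}, applied with $M = R$ (so that the annulus $\{M^{-1} \leq |z| \leq M\}$ contains every root) and a small $s \in (0,1)$, gives $\vp_\g(1) \geq c_2 \delta^{-1} R^{-2\pi/\delta - s} H_{R, \delta}(s)$; and Lemma~\ref{lem:L-form} expresses $H_{R, \delta}(s)$ as one half the sum, over the $n$ roots $\z$ of $f_X$ (with multiplicity), of the integrals $\int_{1/R}^R L_{\z, \delta}(t) t^{-(s+1)}\,dt$. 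Lemma~\ref{lem:mainLogBound} bounds each such integral from below: it is non-negative in general, and at least $R^{-s} \cdot 2^{-10} \eps \delta^2 (1 + o_{\delta \to 0}(1))$ whenever $|\arg(\z)| \geq \pi/4$ (using $|\z|^{-s} \geq R^{-s}$).

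Chaining these inequalities and sending $s \to 0^+$ produces $\Var(X) \geq c\, \eps \, \delta^{-1} R^{-2\pi/\delta} N_{\pi/4}$, where $N_{\pi/4}$ is the number, counted with multiplicity, of roots of $f_X$ satisfying $|\arg(\z)| \geq \pi/4$. To extract the full $\max\{\eps, \delta\} \cdot \delta^{-1}$ prefactor, I split into two regimes. When $\delta \geq \pi/4$, every root automatically satisfies $|\arg(\z)| \geq \pi/4$, so $N_{\pi/4} = n$. Moreover, in this bounded-below regime I can substitute Lemma~\ref{lem:mainLogBound2} for Lemma~\ref{lem:mainLogBound} with a suitable constant $\alpha = \alpha(\delta)$ (so that $\alpha^{2\pi/\delta}$ is absorbed into the absolute constant) to replace the per-root factor $\eps \delta^2$ by an unconditional $c \delta^2$, giving the $\max = \delta$ part of the bound.

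The main obstacle lies in the regime $\delta < \pi/4$, where I must verify $N_{\pi/4} = \Omega(n)$ to preserve the linear-in-$n$ dependence. The essential input is the non-negativity of the coefficients of $f_X$: Vieta's identities for the elementary symmetric functions of the roots, combined with the sign constraints forced by non-negative coefficients, preclude an accumulation of roots near the positive real axis. Together with the annular hypothesis $R^{-1} \leq |\z| \leq R$ and the zero-free sector $|\arg(\z)| \geq \delta$, this should yield $N_{\pi/4} = \Omega(n)$ via a counting argument in the spirit of the weak-positivity theory developed in Section~\ref{sec:definitions}. Once this combinatorial step is in place, the remainder of the proof is mechanical and Theorem~\ref{thm:Var-lower-bound-sharper} follows.
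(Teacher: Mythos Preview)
Your overall architecture matches the paper's proof: chain Lemma~\ref{lem:Var-and-phi}, Lemma~\ref{lem:phi-at-least-H_r}, Lemma~\ref{lem:L-form}, and Lemma~\ref{lem:mainLogBound}/\ref{lem:mainLogBound2}, then split according to whether $\delta$ is bounded away from $0$. Two points, however, are off.

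First, the ``combinatorial step'' you leave open is not resolved by Vieta relations or the weak-positivity material of Section~\ref{sec:definitions}, and it does not require the annular hypothesis or the zero-free sector at all. The needed input is the classical inequality of Obrechkoff (Theorem~\ref{thm:Os-thm} in the paper): a polynomial with non-negative coefficients and $f(0)\neq 0$ has at most $\tfrac{2\alpha}{\pi}\deg f$ zeros in the sector $|\arg(\z)|\leq \alpha$. With $\alpha=\pi/4$ this gives at most $n/2$ roots with $|\arg(\z)|\leq \pi/4$, hence $N_{\pi/4}\geq n/2$ immediately. This is exactly what the paper cites, and without it the small-$\delta$ case is genuinely incomplete.

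Second, your choice $M=R$ does not deliver the full $\max\{\eps,\delta\}$ prefactor in the regime $\delta<\pi/4$. Lemma~\ref{lem:mainLogBound} yields a factor of the $\eps$ appearing in its hypothesis $M\geq 1+\eps$, so with $M=R\geq 1+\eps$ you recover only the $\eps$ side of the maximum. The paper instead takes $M=(1+\delta)R$ in the small-$\delta$ case, so that $M\geq 1+\max\{\eps,\delta\}$ and Lemma~\ref{lem:mainLogBound} can be invoked with parameter $\eps_0=\max\{\eps,\delta\}$; the extra factor $(1+\delta)^{-2\pi/\delta}$ in $M^{-2\pi/\delta}$ is bounded below by an absolute constant. (There is also a minor technicality: Lemma~\ref{lem:mainLogBound} requires the strict inequality $|\arg(\z)|>\tau$, so one should work with $\tau\in(\delta/2,\delta)$ and send $\tau\uparrow\delta$ at the end, as the paper does, rather than set $\tau=\delta$ outright.)
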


One last ingredient is the classical theorem of Obrechkoff \cite{obrechkoff} on the radial distribution of the set of zeros of a polynomial with non-negative coefficients.

\begin{theorem} \label{thm:Os-thm} Let $f(z)$ be a polynomial with non-negative coefficients for which $f(0) \not= 0$. Then 
	\[ \left| \left\lbrace \z\in \C : f(\z) = 0, |\arg(\z)| \leq \alpha \right\rbrace \right| \leq \frac{2\alpha}{\pi}\deg(f). \]
\end{theorem}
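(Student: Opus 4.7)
The plan is to prove Obrechkoff's theorem by combining the argument principle with Descartes' rule of signs, applied to $\mathrm{Im}\,f(te^{i\alpha})$ viewed as a real polynomial in the positive variable $t$. By the conjugate symmetry of $f$, the zeros in $\{|\arg z|\le\alpha\}$ come in complex-conjugate pairs, so the total count equals $2N_+$, where $N_+$ is the number of zeros in the open half-sector $\{0<\arg z<\alpha\}$; after perturbing $\alpha$ slightly to ensure no zero lies on the rays $\arg z=\pm\alpha$, I would apply the argument principle to the region $\{0<\arg z<\alpha,\,|z|<R\}$ with $R$ large enough to enclose all zeros of $f$.

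Next I would evaluate the four contributions to $\Delta\arg f$ around this boundary: along $[0,R]$ the contribution is $0$ since $f(x)>0$ for $x>0$; along the outer arc of radius $R$ the contribution approaches $n\alpha$ as $R\to\infty$ because $f(z)\sim a_n z^n$; the inner arc near $0$ contributes $0$ since $f(z)\to a_0>0$; and along the ray $\{te^{i\alpha}\}$, traversed inward, the contribution equals $-h(\infty)$, where $h$ is the continuous branch of $\arg f(te^{i\alpha})$ with $h(0)=0$. Since $f(te^{i\alpha})\sim a_n t^n e^{in\alpha}$ as $t\to\infty$, we may write $h(\infty)=n\alpha+2\pi M$ for some integer $M$, and the argument principle yields $N_+=-M$. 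The target inequality $N_+\le n\alpha/\pi$ is now equivalent to $|M|\le n\alpha/\pi$.

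To bound $|M|$, I would interpret $-M$ as the net clockwise winding of the curve $g(t):=f(te^{i\alpha})$ around the origin for $t\in(0,\infty)$. This winding is bounded above by the total number of crossings of the negative real axis by $g(t)$, which in turn is bounded by the total number of positive real zeros of the real polynomial $\mathrm{Im}\,g(t)=\sum_{k=1}^n a_k\sin(k\alpha)\,t^k$. Since $a_k\ge 0$, Descartes' rule of signs bounds this zero count by the number of sign changes of $\sin(k\alpha)$ over indices with $a_k>0$, and $\sin(k\alpha)$ flips sign precisely when $k\alpha$ crosses an integer multiple of $\pi$, giving at most $\lfloor n\alpha/\pi\rfloor$ sign changes. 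Combining, $|M|\le \lfloor n\alpha/\pi\rfloor$, so $N=2N_+\le 2\lfloor n\alpha/\pi\rfloor\le 2\alpha n/\pi$.

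The main technical obstacle is the winding-to-crossings bookkeeping. On each maximal interval between consecutive zeros of $\mathrm{Im}\,g$, the curve $g$ stays in one open half-plane and so $h$ changes by at most $\pi$ in absolute value, with the sign of the change dictated by whether the boundary values of $g$ lie on the positive or negative real axis. The initial interval $(0,t_1)$, where $h$ starts at $0$, and the terminal interval $(t_K,\infty)$, where $h$ approaches $n\alpha+2\pi M$ without ever reaching a value in $\pi\mathbb{Z}$, both require separate treatment so that the asymptotic condition is correctly incorporated. Pushing the analysis through to yield the clean bound $|M|\le \lfloor n\alpha/\pi\rfloor$, rather than a weaker off-by-one bound, uses the precise accounting that each sign change of $\mathrm{Im}\,g$ can contribute at most one unit to $|M|$, with the residual phase $n\alpha\bmod\pi$ absorbed by the floor function in the Descartes estimate.
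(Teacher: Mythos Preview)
The paper does not prove this statement: Theorem~\ref{thm:Os-thm} is quoted as the classical theorem of Obrechkoff, with a citation to the 1923 paper, and is invoked as a black box in the proof of Theorem~\ref{thm:Var-lower-bound-sharper}. So there is no ``paper's own proof'' to compare against.

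Your overall strategy (argument principle on the half-sector plus Descartes' rule on $\mathrm{Im}\,f(te^{i\alpha})$) is the classical one and does yield the inequality, but the chain of inequalities in your third paragraph is not correct as stated. Take $f(z)=1+z^3$ and $\alpha=\pi/2$: then $N_+=1$ (the root $e^{i\pi/3}$ lies in the open half-sector), while $g(t)=f(te^{i\alpha})=1-it^3$ has $\mathrm{Re}\,g\equiv1>0$, so $g$ never touches the negative real axis, and $\mathrm{Im}\,g(t)=-t^3$ has no positive root. Thus both ``$N_+\le$ (negative-axis crossings)'' and the implicit ``$N_+\le K$'' fail here. The correct accounting is the one your final paragraph gestures at but does not quite pin down: on each interval between consecutive positive zeros of $\mathrm{Im}\,g$ (and on the initial and terminal intervals) the continuous argument $h$ is confined to a single interval of the form $(k\pi,(k+1)\pi)$, which yields $|h(\infty)|<(K+1)\pi$. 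Writing $n\alpha=q\pi+r$ with $q=\lfloor n\alpha/\pi\rfloor$ and $r\in(0,\pi)$, the Descartes bound $K\le q$ together with $h(\infty)=n\alpha-2\pi N_+$ then forces $N_+\le q$, since $N_+\ge q+1$ would give $|h(\infty)|=(2N_+-q)\pi-r\ge(q+2)\pi-r>(q+1)\pi$. So the conclusion $2N_+\le 2\lfloor n\alpha/\pi\rfloor\le 2\alpha n/\pi$ is right, but it does not factor through either of the intermediate bounds you assert; replace the third paragraph with this direct estimate.
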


\begin{proof}[Proof of Theorem~\ref{thm:Var-lower-bound-sharper}]
	
	Let $u = u_X$ be the logarithmic potential of $X$.  From the discussion in Section~\ref{sec:definitions}, 
	we know $u$ to be weakly-positive, symmetric and harmonic in 
	\[ S(\delta) = \{ z : |\arg(z)| \leq \delta \}\,.\] 
	We also know that $u$ has logarithmic growth. 
	
	Note that we may assume that $n \geq 1$, as the case $n=0$ leaves us with nothing to prove. 
	Now, looking to apply Lemma~\ref{lem:Var-and-phi}, we note that $\Var(X) >0$, since  $\PP(X=0)\PP(X=n)>0$. Choose $\g =\delta/2^{7}$ and apply Lemma~\ref{lem:Var-and-phi} to carry out our first step and obtain
	\begin{equation}\label{eq:final1} \Var(X) \geq c_1\g^{-2}\vp_{\g}(1).
	\end{equation}
	
	We now introduce a parameter $M = \alpha^{-1}R$ for $\alpha \in (0,1)$ to be chosen later.   Since $u$ is a weakly positive harmonic function on $S(\delta)$ 
	with logarithmic growth, we may apply Lemma~\ref{lem:phi-at-least-H_r} with parameter $M$, $\tau \in (\delta/2,\delta)$ and $\eta = 2^{-7}$ to obtain 
	
	\begin{equation}\label{eq:final2} \vp_{\g}(1) \geq  \frac{c_2 H_{M,\tau}(s)}{\tau M^{2\pi/\tau + s}},\end{equation}
	for all $s \in (0,1)$.  
	
	We now use Lemma~\ref{lem:L-form} and \eqref{equ:trunc-geq0} in Lemma~\ref{lem:mainLogBound}, for sufficiently small $s$, to remove all terms with $|\arg(\z)| \leq \pi/4$ from the sum. Indeed, we have
	\begin{equation}\label{eq:final2.5} H_{M,\tau}(s) = \frac{1}{2}\sum_{\z} \int_{1/M}^M L_{\z,\tau}(t) t^{-(s+1)} \, dt \geq \frac{1}{2}\sum_{\z : |\arg(\z)| \geq \pi/4} \int_{1/M}^M L_{\z,\tau}(t) t^{-(s+1)}. \end{equation} 
	There are now a few cases to consider. We let $\delta_0 >0$ be a (small) absolute constant (to be chosen later) and split into two cases, depending on whether $\delta > \delta_0$ or not.

	{ \bf Case 1 :} We first consider an easy case, when $\delta > \delta_0$. In this case, we consider $\tau \in (\delta_0, \delta)$
	and set $M := \alpha^{-1} R$, where $\alpha \in (0,1)$ is chosen to be a sufficiently small constant so that when we apply Lemma~\ref{lem:mainLogBound2} we get
	\begin{equation} \label{eq:2.75} \int_{1/M}^M L_{\z}(t)t^{-(s+1)}\, dt  \geq R^{-s}\left( \tau^2/2  + O\left( \alpha^{1-s} \right)\right) \geq R^{-s}\tau^2/4,  \end{equation}
	for sufficiently small $s$.
	So, with this choice, apply \eqref{eq:2.75} to \eqref{eq:final2.5} to obtain
	\begin{equation} \label{eq:2.99} H_{M,\tau}(s) \geq R^{-s}\frac{\tau^2}{8} \left(\sum_{\z : |\arg(\z)| \geq \pi/4} 1 \right) \geq  \frac{\tau^2n}{32}, 
	\end{equation} for sufficiently small $s>0$, where the second inequality follows from Obrechkoff's theorem, Theorem~\ref{thm:Os-thm}.  Chain together \eqref{eq:final1} and \eqref{eq:final2} with \eqref{eq:2.99} to obtain
	\[ \Var(X) \geq \frac{c_1}{\g^2}\vp_{\g}(1) \geq \frac{c_3}{\g^2\tau} H_{M,\tau}(s)M^{-2\pi/\tau + s} \geq \alpha^{-s}\alpha^{2\pi/\tau}\frac{c_3\tau}{32\g^2}R^{-2\pi/\tau}n. \] 
	Thus recalling that $\g = \tau/2^{7}$ and that $\alpha^{\pi/\tau} \geq \alpha^{\pi/\delta_0} >0$ is a constant, we may take $\tau \uparrow \delta$ to complete 
	the proof.
	
	{ \bf Case 2 :} In this case we may assume that $\delta$ satisfies $\delta_0 > \delta >0$, where $\delta_0$ is an absolute constant to be chosen later. 
	We choose $M = (1+\delta)R$ and apply \eqref{eq:trunc-int} from Lemma~\ref{lem:mainLogBound} to each term in the sum in \eqref{eq:final2.5} to get
	\begin{equation} \label{eq:final3} 
		H_{M,\tau}(s) 
		\geq  
		R^{-s}\left(\frac{\eps_0 \tau^2 }{2^{11}}(1 + o_{\tau \rightarrow 0}(1))\right)\left( \sum_{\z : |\arg(\z)| \geq \pi/4} 1  \right)
		\geq \left(R^{-s}\frac{\eps_0 \tau^2 }{2^{12}}(1 + o_{\tau \rightarrow 0}(1))\right)n,\end{equation}
	where $\eps_0 = \max\{ \eps, \delta \}$. We may now assume that $\delta_0$ is small enough so that $0 < \tau < \delta_0$ implies that $(1+o_{\tau \rightarrow 0}(1)) \geq 1/2$ in \eqref{eq:final3}.
	Thus 
	\[H_{M,\tau}(s) \geq \frac{\eps_0 \tau^2}{2^{14}}n \] and 
	\[ \Var(X) \geq \frac{c_1}{\g^2}\vp_{\g}(1) \geq \frac{c_2}{\g^2\tau} H_{M,\tau}(s)M^{-2\pi/\tau + s} \geq c_3(1+\delta)^{-2\pi/\delta} \eps_0\tau^{-1}R^{-2\pi/\tau} n,\] for sufficiently small $s>0$. Since $(1+\delta)^{-2\pi/\delta}$ is bounded below, the result follows by taking $\tau \uparrow \delta$. This completes the proof of Theorem~\ref{thm:Var-lower-bound}.

\end{proof}
\appendix 
\section*{Appendix}
\section{Details from Section~\ref{sec:connection-to-mellin}} \label{sec:mellin-appendix}

We will require a few facts about \emph{Poisson kernels}, which we interpret as the probability density function of the location of where Brownian motion exits a region.
For this, define 
\[ \HH := \{z \in \C: \Im(z) > 0 \}. \]

\begin{fact}\label{fact:UHP}
	Let $z \in \HH$, $(B_t)_{t\geq 0}$ be a Brownian motion in $\HH$, started at $z$ and let $T$ be the stopping time $T = \inf\{t : \Im(B_t) = 0 \}$.  Then the density of the random variable $B_T \in \R$ is $$ P_{\HH,z}(s) = \frac{\Im(z)}{\pi |z - s|^2}\,.$$
\end{fact}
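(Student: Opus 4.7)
\textbf{Proof plan for Fact~\ref{fact:UHP}.} This is the standard Poisson kernel formula for the upper half-plane, so the plan is to derive it from the joint distribution of the two coordinates of planar Brownian motion. Write $z = x + iy$ with $y > 0$ and decompose the planar Brownian motion as $B_t = (X_t, Y_t)$, where $X_t$ and $Y_t$ are independent one-dimensional Brownian motions started from $x$ and $y$ respectively. The stopping time $T$ depends only on the second coordinate, namely $T = \inf\{t : Y_t = 0\}$, which is the first passage time of a one-dimensional Brownian motion from $y$ to $0$.

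The next step is to recall the first-passage density, which is classical:
\[
f_T(t) = \frac{y}{\sqrt{2\pi t^3}}\, e^{-y^2/(2t)}, \qquad t > 0.
\]
Conditional on $T = t$, the random variable $X_T$ is Gaussian with mean $x$ and variance $t$, since $X_t$ evolves independently of $Y_t$. Therefore the density of $B_T = X_T$ on $\R$ can be written as the one-dimensional marginal
\[
P_{\HH,z}(s) = \int_0^\infty \frac{1}{\sqrt{2\pi t}}\, e^{-(s-x)^2/(2t)} \cdot \frac{y}{\sqrt{2\pi t^3}}\, e^{-y^2/(2t)}\, dt
= \frac{y}{2\pi} \int_0^\infty t^{-2} e^{-((s-x)^2 + y^2)/(2t)}\, dt.
\]

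Finally, I would evaluate this integral by the substitution $u = 1/(2t)$, which gives $t^{-2}\, dt = -2\, du$ and turns the integral into
\[
P_{\HH,z}(s) = \frac{y}{\pi} \int_0^\infty e^{-u((s-x)^2 + y^2)}\, du = \frac{y}{\pi((s-x)^2 + y^2)} = \frac{\Im(z)}{\pi|z - s|^2},
\]
which is the desired formula. There is no real obstacle here; the only mild subtlety is the appeal to the first-passage density, which one can justify either by the reflection principle or by differentiating the well-known distribution $\P(T \le t) = 2\P(Y_t \le 0)$. Everything else is a clean Gaussian integral.
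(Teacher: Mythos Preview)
Your proof is correct. The decomposition into independent coordinates, the use of the L\'evy first-passage density for $T$, and the Gaussian integration are all sound; the substitution $u = 1/(2t)$ cleanly produces the Cauchy density.

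The paper takes a different route. Rather than computing directly, it invokes the general fact that on bounded domains the exit density of Brownian motion coincides with the Poisson kernel (via uniqueness for the Dirichlet problem), passes to the unbounded domain $\HH$ by conformal invariance, and then cites the explicit Poisson kernel for $\HH$ from a textbook. Your approach is more self-contained and elementary: it never mentions harmonic functions or the Dirichlet problem, and the only external input is the first-passage density, which follows from the reflection principle in one line. The paper's approach, by contrast, is more conceptual and slots directly into the machinery used immediately afterward (Corollary~\ref{cor:poisson-sector}), where conformal invariance is applied again to transfer the result to a sector. So your argument buys independence from the harmonic-function theory, while the paper's buys coherence with the surrounding lemmas.
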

\begin{proof}
	For bounded domains, the fact that the Poisson kernel is the hitting density of Brownian motion follows from \cite[Theorem 8.5]{peres} together with uniqueness of solutions to the Dirichlet problem on bounded domains \cite[Chapter 1]{harmonic-axler}.  By conformal invariance of Brownian motion \cite[Theorem 7.20]{peres}, this means that the Poisson kernel is the hitting density of Brownian motion for the upper half-plane.  The Poisson kernel for the upper-half plane is then given in \cite[Chapter 7]{harmonic-axler}.
\end{proof}

\begin{corollary} \label{cor:poisson-sector}
	For $\delta > 0$ and $z \in S(0,\delta)$, let $(B_t)_{t\geq 0}$ be a Brownian motion in $\C$ started at $z$ and let $T = \inf\{ t \geq 0 : B_t \in \partial S(0,\delta) \}$.  Then the density of the random variable $B_T$ is given by 
	$$P_{z,\delta}(s) = \frac{g s^{g - 1} \Im(z^g)}{\pi |z^g - s^g|^2}, \qquad P_{z,\delta}(e^{i\delta} s) = \frac{g s^{g - 1} \Im(z^g)}{\pi |z^g + s^g|^2} $$
	where $g = \pi/\delta$.
\end{corollary}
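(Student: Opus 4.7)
The plan is to reduce the sector to the upper half plane via a conformal map and then apply Fact~\ref{fact:UHP} together with conformal invariance of Brownian motion.

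First I would introduce the conformal map $\psi : S(0,\delta) \to \HH$ defined by $\psi(w) = w^g$ with $g = \pi/\delta$. A direct check shows $\psi$ is a biholomorphism from the interior of $S(0,\delta)$ onto $\HH$, continuous up to the boundary, which sends the ray $\{s : s > 0\}$ onto $(0,\infty)$ and the ray $\{e^{i\delta} s : s > 0\}$ onto $(-\infty,0)$, since $(e^{i\delta} s)^g = e^{i\pi} s^g = -s^g$. In particular, $\psi(z) \in \HH$ and $\Im(\psi(z)) = \Im(z^g)$.

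Next I invoke conformal invariance of planar Brownian motion (the result cited in the proof of Fact~\ref{fact:UHP} from Peres's book): if $(B_t)$ is Brownian motion in $\C$ started at $z$ and $T$ is the first exit time from $S(0,\delta)$, then up to a (continuous, strictly increasing) time change, $(\psi(B_t))$ is a planar Brownian motion in $\HH$ started at $\psi(z) = z^g$, and $\psi(B_T)$ is precisely its first hitting point of $\R = \partial \HH$. By Fact~\ref{fact:UHP}, $\psi(B_T)$ has density
\[ P_{\HH, z^g}(u) = \frac{\Im(z^g)}{\pi |z^g - u|^2}, \qquad u \in \R.\]

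Finally I push this density back to $\partial S(0,\delta)$ via $\psi^{-1}$. On the positive ray, $u = s^g$ parametrizes the image of the point $s > 0$ and $du = g s^{g-1}\, ds$, so the density of the hitting point on the positive ray, parametrized by $s$, is
\[ P_{z,\delta}(s) = \frac{\Im(z^g)}{\pi |z^g - s^g|^2} \cdot g s^{g-1}.\]
On the other ray, $u = -s^g$ parametrizes $e^{i\delta} s$ with $|du| = g s^{g-1}\, ds$ and $|z^g - (-s^g)| = |z^g + s^g|$, yielding
\[ P_{z,\delta}(e^{i\delta} s) = \frac{\Im(z^g)}{\pi |z^g + s^g|^2} \cdot g s^{g-1},\]
as required. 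There is no serious obstacle here: the only point requiring care is verifying that $\psi$ is indeed a conformal bijection from $S(0,\delta)$ onto $\HH$ with the correct boundary behavior, after which the proof is a routine application of conformal invariance and a one-variable change of variables.
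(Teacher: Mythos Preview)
Your proof is correct and follows essentially the same approach as the paper: both use the conformal map $w \mapsto w^g$ to reduce to Fact~\ref{fact:UHP} and then transfer the density back via a change of variables. The only cosmetic difference is that for the ray $\{e^{i\delta}s\}$ the paper invokes the reflection symmetry $P_{z,\delta}(e^{i\delta}s) = P_{e^{i\delta}\bar z,\delta}(s)$, whereas you simply observe that this ray is mapped to the negative reals and plug in $u = -s^g$ directly.
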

\begin{proof}
	For a point $0 < s \in \partial S(0,\delta)$, we will differentiate $\PP(B(T) \in [s,s+h])$ with respect to $h$ and evaluate at $h = 0$.  Note that the map $z \mapsto z^g$ is conformal on $S(0,\delta)$ and maps $S(0,\delta)$ to the $\mathbb{H}$.  By conformal invariance of planar Brownian motion---e.g.\ \cite[Theorem 7.20]{peres}---we have that $$\PP(B(T) \in [s,s+h]) = \PP(\tilde{B}(\tilde{T}) \in [s^{g},(s+h)^{g}] )$$ where $\tilde{B}$ is a Brownian motion started at $z^{g}$ and $\tilde{T} = \inf\{t \geq 0 : \tilde{B}(t) \in \partial \mathbb{H}  \}$.   By Fact \ref{fact:UHP}, we have $$\PP(\tilde{B}(\tilde{T}) \in [s^{g},(s+h)^{g}] ) = \int_{s^{g}}^{(s+h)^{g}}\frac{\Im(z^{g})}{\pi |z^{g} - x|^2}\,dx\,.  $$
	Differentiating with respect to $h$ gives $$P_{z,\delta}(s) = \frac{d}{dh} \left( \int_{s^{g}}^{(s+h)^{g}}\frac{\Im(z^{g})}{\pi |z^{g} - x|^2}\,dx\right)\Bigg|_{h = 0} = \frac{g s^{g - 1} \Im(z^g)}{\pi |z^g - s^g|^2}\,.$$
	Noting that $P_{z,\delta}(s) = P_{e^{i\delta}\zbar,\delta}(e^{i\delta} s)$ yields $P_{z,\delta}(e^{i\delta}s) = P_{e^{i\delta} \zbar,\delta}(s)$, thereby giving 
	$$P_{z,\delta}(e^{i\delta}s) = \frac{g s^{g -1} \Im(- \zbar^g)}{\pi| -\zbar^g - s^g|^2} = \frac{g s^{g - 1} \Im(z^g) }{\pi |z^g + s^g|^2}\,.$$
\end{proof}

Our goal is to prove Lemma~\ref{lem:IntFormForDiff}.  We first prove a more general statement about Poisson integration on unbounded regions.

\begin{lemma}\label{lem:poisson-integration-sector}
	Let $u$ be a harmonic function on a neighborhood of $S(0,\delta)$ with 
	$$|u(z)| = O(\log|z|) \text{ as } z \to \infty \text{ and } |u(z)| = O(\log |z|^{-1} ) \textit{ as } z \to 0,$$
	while $z \in S(0,\delta)$.  	Then $$u(z) = \int_{w \in \partial S(0,\delta)} P_{z,\delta}(w) u(w)\,dw\,.$$ 
\end{lemma}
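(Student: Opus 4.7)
The plan is to use the probabilistic interpretation of harmonic functions and approximate the unbounded sector $S(0,\delta)$ by bounded truncations. For $0 < r < |z| < R$, set $D_{r,R} := S(0,\delta) \cap \{r < |w| < R\}$. Since $u$ is harmonic on a neighborhood of $\overline{D_{r,R}}$ and $D_{r,R}$ is bounded, the standard Poisson representation together with the identification of the hitting distribution with Brownian motion (as cited in the proof of Fact \ref{fact:UHP}) gives
\[
u(z) = \Ex_z\!\left[u(B_{T_{r,R}})\right],
\]
where $(B_t)$ is planar Brownian motion started at $z$ and $T_{r,R} := \inf\{t: B_t \in \partial D_{r,R}\}$. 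I would then split the expectation according to which of the four pieces of $\partial D_{r,R}$ is first hit: the two rays $\arg(w) \in \{0,\delta\}$ (contribution $E_{\mathrm{rays}}$), the inner arc $|w|=r$ (contribution $E_r$), and the outer arc $|w|=R$ (contribution $E_R$).

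The next step is to show $E_r \to 0$ as $r \to 0$ and $E_R \to 0$ as $R \to \infty$. This is where the logarithmic growth hypothesis on $u$ enters. By conformal invariance under $w \mapsto w^g$ with $g = \pi/\delta$, the sector $S(0,\delta)$ maps to $\HH$ and the arcs at radii $r,R$ map to semicircles of radii $r^g, R^g$. A standard harmonic measure estimate for Brownian motion in $\HH$ (easily derived from Fact \ref{fact:UHP}) shows that the probability of hitting the outer semicircle of radius $R^g$ before the real axis, starting from $z^g$, is $O(\Im(z^g)/R^g) = O_z(R^{-g})$; likewise the inner arc is hit with probability $O_z(r^g)$. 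Combining with $|u(w)| = O(\log R)$ and $|u(w)| = O(\log r^{-1})$ respectively on the arcs yields $|E_R| = O(R^{-g}\log R)$ and $|E_r| = O(r^g \log r^{-1})$, both of which vanish in the limit.

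Finally, as $r \downarrow 0$ and $R \uparrow \infty$, the times $T_{r,R}$ increase to $T = \inf\{t : B_t \in \partial S(0,\delta)\}$, which is almost surely finite (Brownian motion in a proper sector exits in finite time, again by conformal invariance with $\HH$). By Corollary \ref{cor:poisson-sector} the law of $B_T$ has density $P_{z,\delta}$ along the two boundary rays, and $E_{\mathrm{rays}}$ converges to the integral $\int_{\p S(0,\delta)} P_{z,\delta}(w) u(w)\,dw$ by dominated convergence, with the dominating function provided by the logarithmic growth of $u$ and the explicit polynomial decay of $P_{z,\delta}$ computed in Corollary \ref{cor:poisson-sector}. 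Assembling the three pieces yields the claimed identity. The main obstacle is justifying the vanishing of the arc contributions; this reduces, via the conformal map $w \mapsto w^g$, to an elementary harmonic-measure estimate in the upper half-plane, and the logarithmic growth hypothesis is precisely what makes the product of boundary probability and boundary size tend to zero.
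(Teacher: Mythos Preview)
Your proposal is correct and follows essentially the same approach as the paper: truncate the sector to a bounded annular sector, apply the Poisson/Brownian representation there, and show the arc contributions vanish using the logarithmic growth hypothesis together with a polynomial-decay bound on the probability that Brownian motion reaches the arcs. The only cosmetic difference is that the paper quotes this hitting-probability estimate as a black-box lemma (Lemma~4.3 of \cite{clt2}), whereas you derive it directly via the conformal map $w\mapsto w^{\pi/\delta}$ to $\HH$; and the paper passes to the limit on the rays via monotonicity of the truncated Poisson kernels rather than dominated convergence.
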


We first require a lemma about the probability that Brownian motion exits a cone before leaving a disk, which we prove in \cite{clt2}.
For this, define $S_R(0,\delta) = \{ z \in S(0,\delta) : R^{-1} \leq |z| \leq R \}$ and $S_R^{\ast}(0,\delta) := \{ z \in \partial S_R(0,\delta) : |z| \in \{R,R^{-1} \}\}$.

\begin{lemma}[Lemma 4.3 of \cite{clt2}] \label{lem:upper-bound}
	For $1 < r < R$ and $\delta > 0$, let $B(t)$ be planar Brownian motion started at some point $z \in S(0,\delta)$ with $|z| = r$ and set $T = \inf\{t : B(t) \in \partial S_R(0,\delta) \}$.  Then there exist constants $C,c > 0$ so that $$\PP(B(T) \in S_R^\ast(0,\delta) ) \leq C \left(\frac{r}{R}\right)^{-c/\delta}\,.$$
\end{lemma}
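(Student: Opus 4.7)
The plan is to reduce to a standard Brownian‑motion‑in‑a‑rectangle estimate via two conformal maps, and then use optional stopping with an exponential harmonic function. Concretely, consider the composite map $\Phi = \psi \circ \phi$, where $\phi(z)=z^{\pi/\delta}$ sends $S(0,\delta)$ conformally to the upper half plane $\HH$, and $\psi(w)=\log w$ (principal branch) sends $\HH$ to the strip $\{0 < \Im(u) < \pi\}$. Under $\Phi$, the circular arcs $|z|=R$ and $|z|=R^{-1}$ in $S(0,\delta)$ go to the vertical segments $\Re(u) = \pm a$ with $a := (\pi/\delta)\log R$, while the radial sides $\arg(z)=0$ and $\arg(z)=\delta$ go to the horizontal segments $\Im(u)=0$ and $\Im(u)=\pi$. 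Thus $\Phi$ maps $S_R(0,\delta)$ conformally onto the rectangle $\mathcal{R} = [-a,a]\times[0,\pi]$ and sends $S_R^{\ast}(0,\delta)$ onto the two vertical sides $\{\pm a\}\times[0,\pi]$. The starting point $z$ with $|z|=r \in (1,R)$ maps to a point $u_0$ with $\Re(u_0) = x_0 := (\pi/\delta)\log r \in (0,a)$ and $\Im(u_0) \in [0,\pi]$.

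By the conformal invariance of planar Brownian motion (e.g.\ Theorem 7.20 of \cite{peres}), after a suitable time change $\widetilde{B}_t := \Phi(B_t)$ is a Brownian motion in $\mathcal{R}$ started at $u_0$ and run until its first exit $\widetilde{T}$. The exit distributions correspond, so it suffices to bound $\Pr(\widetilde{B}_{\widetilde{T}} \in \{\pm a\} \times [0,\pi])$. For this I introduce the two positive harmonic functions $h_{+}(x,y)=e^{\,x-a}$ and $h_{-}(x,y)=e^{-x-a}$ on $\mathcal{R}$. Both are bounded on the compact set $\mathcal{R}$, so optional stopping applies to $h_{\pm}(\widetilde{B}_t)$ and yields $h_{\pm}(u_0) = \Ex[h_{\pm}(\widetilde{B}_{\widetilde{T}})]$. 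Since $h_{+} \ge 0$ on $\partial \mathcal{R}$ and $h_{+}\equiv 1$ on the right vertical side,
\begin{equation*}
e^{x_0-a} \;=\; \Ex\bigl[h_{+}(\widetilde{B}_{\widetilde{T}})\bigr] \;\geq\; \Pr\bigl(\widetilde{B}_{\widetilde{T}} \in \{a\}\times[0,\pi]\bigr),
\end{equation*}
and symmetrically $e^{-x_0 - a} \geq \Pr(\widetilde{B}_{\widetilde{T}} \in \{-a\}\times[0,\pi])$.

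Adding the two inequalities, and using that $r \geq 1$ implies $(rR)^{-\pi/\delta} \leq (r/R)^{\pi/\delta}$, gives
\begin{equation*}
\Pr(B_T \in S_R^{\ast}(0,\delta)) \;\leq\; e^{x_0 - a} + e^{-x_0 - a} \;\leq\; 2\,(r/R)^{\pi/\delta},
\end{equation*}
which is the desired bound with $C=2$ and $c=\pi$ (up to the apparent sign convention in the exponent of the stated inequality). There is no serious obstacle: the conformal invariance step is standard once one checks that $\Phi$ extends continuously to $\partial S_R(0,\delta)$ and is conformal in the interior, and optional stopping is unproblematic because $h_{\pm}$ are bounded on the bounded region $\mathcal{R}$. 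The only bookkeeping is in verifying the boundary correspondence under $\Phi$, which is direct from the definitions.
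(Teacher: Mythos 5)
Your overall strategy---map $S_R(0,\delta)$ conformally onto the rectangle $[-a,a]\times[0,\pi]$ via $z\mapsto (\pi/\delta)\log z$ and then control the exit probability through the vertical sides with a positive harmonic comparison function and optional stopping---is sound, and is the natural way to prove this estimate (the paper itself does not prove the lemma; it imports it from \cite{clt2}). But there is a genuine error at the key step: the functions $h_{\pm}(x,y)=e^{\pm x-a}$ are \emph{not} harmonic in two variables. Indeed $\Delta e^{x-a}=e^{x-a}>0$, so $h_{+}$ is strictly subharmonic, $h_{+}(\widetilde{B}_{t\wedge\widetilde{T}})$ is a submartingale, and optional stopping only gives $e^{x_0-a}\leq \Ex[h_{+}(\widetilde{B}_{\widetilde{T}})]$. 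Both of your displayed inequalities then point the same way ($\Ex[h_{+}(\widetilde{B}_{\widetilde{T}})]$ is bounded \emph{below} by each of $e^{x_0-a}$ and $\Pr(\widetilde{B}_{\widetilde{T}}\in\{a\}\times[0,\pi])$), and they cannot be chained to bound the exit probability. Note also that the obvious repair $\Re\,e^{u-a}=e^{x-a}\cos y$ fails, since $\cos y$ changes sign on $[0,\pi]$, destroying the positivity you need on the boundary.

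The argument is salvageable by halving the frequency: take $h(x,y)=\cosh(x/2)\cos\bigl((y-\pi/2)/2\bigr)$, which is genuinely harmonic, satisfies $h\geq 0$ on all of $\mathcal{R}$ (since $|(y-\pi/2)/2|\leq\pi/4$), satisfies $h\geq \tfrac{1}{\sqrt2}\cosh(a/2)\geq \tfrac{1}{2\sqrt2}e^{a/2}$ on both vertical sides, and satisfies $h(u_0)\leq\cosh(x_0/2)\leq e^{x_0/2}$. Optional stopping (now legitimate) gives
\begin{equation*}
\Pr\bigl(\widetilde{B}_{\widetilde{T}}\in\{\pm a\}\times[0,\pi]\bigr)\;\leq\; 2\sqrt2\, e^{(x_0-a)/2}\;=\;2\sqrt2\,(r/R)^{\pi/(2\delta)},
\end{equation*}
which proves the lemma with $c=\pi/2$ (and, as you correctly observe, the exponent in the statement carries a sign typo; the application in the proof of Lemma~\ref{lem:poisson-integration-sector} confirms the intended bound is $O(R^{-c/\delta})$ for fixed $r$). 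Everything else in your write-up---the boundary correspondence under $\Phi$, conformal invariance, and the bookkeeping with $x_0$ and $a$---is correct.
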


We can now prove Lemma~\ref{lem:poisson-integration-sector}, the basic fact about Poisson integration on a sector. 

\begin{proof}[Proof of Lemma~\ref{lem:poisson-integration-sector}]
	Let $z \in S(\delta)$, put $S_R := S_R(0,\delta)$ and $S^{\ast}_R := S_R^{\ast}(0,\delta)$. Of course, for all large $R$, we have $z \in S_R$. Now, letting 
	$P_{z,\delta}^R(w)$ denote the Poisson kernel of $S_{R}$, we have $$u(z) = \int_{w \in \p S_R} P_{z,\delta}^{R}(w) u(w)\,dw\,.$$
	
	We partition $\p S_R = S_R^\ast \cup E_R$ and write $$u(z) = \int_{w \in E_R} P_{z,\delta}^{R}(w)u(w)\,dw + \int_{w \in S_R^\ast} P_{z,\delta}^R(w) u(w)\,dw\,.$$
	
	Now, Lemma~\ref{lem:upper-bound} implies that $\int_{w \in S_R^\ast} P_{z,\delta}^R(w) \,dw = O(R^{-c/\delta})$ as $R \to \infty$.  The growth assumption on $u(w)$ then assures that the latter integral converges to $0$ as $R \to \infty$.  Further, note that $P_{z,\delta}^R(w)$ is increasing in $R$, and converges to $P_{z,\delta}(w)$, the Poisson kernel of $S(0,\delta)$.  Thus, we may take $R \to \infty$ in the above to complete the proof.
\end{proof}

\begin{proof}[Proof of Lemma~\ref{lem:IntFormForDiff}]
	For simplicity, write $z_1 = z_{\tau,\gamma}$, $z_2 = e^{i\gamma}z_1$ and $P_z(\cdot)$ for the Poisson kernel of $S(0,\tau/2)$.  Applying Lemma \ref{lem:poisson-integration-sector} to $S(0,\tau/2)$, we have 
	\begin{equation}\label{eq:uz1-expression}
		u(z_1) = \int_{0}^\infty \left( P_{z_1}(t) u(t)  + P_{z_1}(e^{i\tau/2} t) u(e^{i\tau/2}t)\right)\,dt\,.\end{equation}
	
	We may apply Lemma \ref{lem:poisson-integration-sector} to $S(\tau/2,\tau)$ by rotating clockwise by $\tau/2$; this amounts to multiplying all factors of $z$ by $e^{-i\tau/2}$ throughout the Poisson kernels.  We then obtain 
	$$ u(z_2) = \int_0^\infty \left( P_{e^{-i\tau/2}z_2}(t) u(e^{i\tau/2}t)  + P_{e^{-i\tau/2} z_2}(e^{i\tau/2} t) u(e^{i\tau}t)\right)\,dt\,.  $$
	
	We now make use of two crucial identities: as noted in the proof of Corollary \ref{cor:poisson-sector}, we have $P_z(t) = P_{e^{i\tau/2}\zbar}(e^{i\tau/2} t)$; by the choice of $z_1$ and $z_2$, we have $\overline{e^{-i\tau/2}z_2} e^{i\tau/2} = z_1$.  This simplifies the expression for $u(z_2)$: \begin{equation}\label{eq:uz2-expression}
		u(z_2) = \int_0^\infty \left(P_{z_1}(e^{i\tau/2}t) u(e^{i\tau/2}t) + P_{z_1}(t)u(e^{i\tau}t)\right)\,dt\,.\end{equation}
	
	Subtracting \eqref{eq:uz2-expression} from \eqref{eq:uz1-expression} yields $$\vp_\gamma(z_{\tau,\gamma}) = u(z_1) - u(z_2) = \int_0^\infty P_{z_1}(t)(u(t) - u(e^{i\tau t}))\,dt\,.$$
\end{proof}

\begin{proof}[Proof of Lemma~\ref{lem:Ratio-PKernel-and-ts}]
	
	Set $g = \pi/(\tau/2) = 2\pi/\tau$.  Note that for $t > 0$ and $\sigma \in [e^{-\pi},e^{\pi}]$ we have that $P_{z,\tau/2}(t) = \s^{-1} P_{z/\s,\tau/2}(t/\s) \geq e^{-\pi} P_{z/\s}(t/\s)$  and thus it is sufficient to find a lower bound in the case of $z = e^{i(\tau - \gamma)/2}$ for $(M e^\tau)^{-1} \leq t \leq M e^\tau$. 
	
	Observe that 
	$$\Im(z^g) 
	= \Im\left( e^{i \pi}\cdot e^{-i\gamma \pi /\tau} \right) = \sin(\pi \gamma/\tau)  = \sin(\pi \eta)\,.$$
	For $t \in [1,Me^\tau ]$, we then have $$
	t^{1+s} P_{\tau,\tau/2}(t) = 2\sin(\pi \eta) \frac{t^{g +s}}{\tau |e^{i(\tau - \gamma)/2} - t^g|^2  } \geq  \sin(\pi \eta)\frac{t^{-g + s}}{2\tau} \geq c_1 \sin(\pi \eta)  \tau^{-1} M^{-2\pi/\tau + s}
	$$
	where $c_1 > 0$ is an absolute constant.
	
	For $t \in [(Me^\tau)^{-1},1]$, we have $$
	t^{1+s} P_{\tau,\tau/2}(t) = 2\sin(\pi \eta) \frac{t^{g +s}}{\tau |e^{i(\tau - \gamma)/2} - t^g|^2  } \geq c_\eta \tau^{-1} t^{g+s} \geq c_{\eta}' \tau^{-1} M^{-2\pi/\tau - s}\,.$$
\end{proof}

\section{Calculations from Section~\ref{sec:truncation}} \label{sec:trunc-appendix}

For our discussion here we write $L(t) = L_{\t,\tau}(t)$. We now look to write express $L(t) = \frac{1}{2}\log(1 - \alpha(t))$. Using the identity
\[ \log|1 - e^{i\t}t | = \frac{1}{2} \log(1 - 2t \cos(\theta) + t^2)
\] we may obtain the expression
\[ L(t) = \frac{1}{2}\log \left( \frac{(1 - 2t \cos(\theta) + t^2)^2}{(1 - 2t \cos(\theta + \tau) + t^2)(1 - 2t\cos(\theta - \tau ) + t^2)} \right). \]
This allows us to write $L(t)= \frac{1}{2}\log(1 - \alpha(t) )$ where $\alpha(t) = A(t)/B(t)$ and 
\begin{equation} \label{equ:defA} A = 4(1- \cos \tau)(\cos \t ) t - 4(\sin \tau)^2t^2 + 4(1-\cos \tau )(\cos \t) t^3; \end{equation}
\begin{equation} \label{eqy:defB}
	B = (1 - 2t \cos(\theta + \tau) + t^2)(1 - 2t\cos(\theta - \tau ) + t^2).\end{equation} We may now prove Lemma~\ref{lem:sign-change}.

\begin{proof}[Proof of Lemma~\ref{lem:sign-change}]
	From the discussion above, we may write, for all $t >0$, 
	\[ L(t) = \frac{1}{2}\log\left( 1 - \alpha(t) \right),
	\] where $\alpha(t) = A(t)/B(t)$ and $A(t),B(t)$ are real-valued polynomials in $t$. Since 
	\[ B(t) = |1-te^{i(\t+\tau)}|^2|1-te^{i(\t-\tau)}|^2 \geq 0, 
	\] the sign of $L(t)$ is completely determined by the sign of $A(t)$. 
	We rewrite 
	\[ A(t) = t\left(4(1- \cos \tau)(\cos \t ) - 4(\sin \tau)^2t + 4(1-\cos \tau )(\cos \t) t^2 \right),
	\] by first defining $\mu = \frac{(\sin \tau )^2}{(1-\cos \tau)(\cos \t)}$ in the case $\t \not= \pi/2$, so that 
	\[ A(t) = 4\left(1-\cos \tau )(\cos \t \right)t( 1 - \mu t + t^2). 
	\] Note that 
	\[ |\mu|\geq \frac{(\sin \tau)^2 }{(1-\cos \tau)|\cos \tau|} \geq \min_{x \in [0,1]} \frac{1 - x^2}{x-x^2} \geq 2 \]
	and thus the polynomial $P(t) := (1-\mu t + t^2)$ has two real roots $\frac{\mu}{2} \pm \frac{1}{2}\sqrt{\mu^2- 4}$. 
	If $\t \in (\tau,\pi/2)$ then $\mu > 0$, implying that $P(t)$ has two positive roots and therefore $L(t)$ has a positive root $\geq 1$, by the symmetry $L(t) = L(1/t)$.
	Since $\lim_{t \rightarrow \infty} A(t) = + \infty$, we have that $A(t) \leq 0$ for $t \in [1/\l,\t]$ and $A(t) \geq 0$, otherwise. Thus 
	implying Part~\ref{part:theta-small} in the Lemma~\ref{lem:sign-change}. 
	
	If $\t \in (\pi/2,\tau)$, then $\mu < 0$ implying that both of the roots of $P(t)$ are non-positive. Since $\lim_{t \rightarrow \infty} A(t) = -\infty$
	it follows that $P(t) \leq 0$ for all $t>0$, which means that $L(t)\geq 0$, for all $t >0$.
	The case $\t = \pi/2$ is similar to this case, thus proving Part~\ref{part:theta-big} of Lemma~\ref{lem:sign-change}.\end{proof}

\vspace{4mm}

For the proof of Lemma~\ref{lem:small-log-bound}, we need the following lower bound for $A$, as defined at \eqref{equ:defA}.

\begin{obs} \label{obs:bound-on-A} For $\tau \in (0,\pi)$ and $\t$ satisfying $\pi/4 \leq |\t| \leq \pi$, let $A = A_{\theta,\tau}$ be as above.
	Then
	\[ -A(t) \geq \tau^2\left(1 + O(\tau^2)\right), \] for $t \in [1,3/2]$.
\end{obs}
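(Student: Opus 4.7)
The plan is to unfold the definition
$$-A(t) = 4(\sin\tau)^2 t^2 - 4(1-\cos\tau)(\cos\theta)(t+t^3)$$
and substitute the Taylor expansions $(\sin\tau)^2 = \tau^2(1 + O(\tau^2))$ and $1-\cos\tau = \tfrac{\tau^2}{2}(1+O(\tau^2))$, valid as $\tau \to 0$ with implicit constants uniform in $\theta$ and $t$. Factoring $\tau^2$ out of both terms yields
$$-A(t) = 2\tau^2 \bigl(2t^2 - (\cos\theta)(t + t^3)\bigr)\,(1 + O(\tau^2)),$$
which reduces the observation to the purely algebraic statement that $g(\theta,t) := 2t^2 - (\cos\theta)(t+t^3)$ is bounded below by a constant strictly greater than $1/2$ on $\{t \in [1,3/2]\} \times \{\pi/4 \leq |\theta|\leq \pi\}$.

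To verify this lower bound, I would first use $|\theta|\geq \pi/4$ to conclude $\cos\theta \leq \sqrt 2/2$. Since $t + t^3 > 0$ for $t \geq 1$, the map $\cos\theta \mapsto g(\theta,t)$ is decreasing in $\cos\theta$, so the minimum of $g$ over $\theta$ at fixed $t$ is attained at $\cos\theta = \sqrt{2}/2$. It therefore suffices to show
$$h(t) := 2t^2 - \tfrac{\sqrt 2}{2}(t+t^3) \geq 1/2, \qquad t \in [1,3/2].$$
A short derivative calculation gives $h'(t) = 4t - \tfrac{\sqrt 2}{2}(1+3t^2)$; the roots of $h'$ are $t = (4 \pm \sqrt{10})/(3\sqrt 2)$, both of which lie outside $[1,3/2]$. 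Since $h'(1) = 4 - 2\sqrt 2 > 0$, the function $h$ is increasing on $[1,3/2]$ and hence attains its minimum at $t = 1$, where $h(1) = 2 - \sqrt 2 > 1/2$.

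There is no real obstacle here: the observation is a calibration step designed to extract the leading $\tau^2$ behaviour of $A(t)$ as a clean input for the proof of Lemma~\ref{lem:small-log-bound}. The only point requiring minor care is that the error term $O(\tau^2)$ from the Taylor expansion be uniform in $\theta \in [-\pi,\pi]$ and $t \in [1,3/2]$, which is immediate since $|\cos\theta| \leq 1$ and $t$ lies in a compact set.
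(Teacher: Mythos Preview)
Your proof is correct and follows essentially the same strategy as the paper: Taylor-expand $(\sin\tau)^2$ and $1-\cos\tau$ to extract the leading $\tau^2$, then verify an elementary algebraic inequality on the compact set $[1,3/2]\times\{\pi/4\le|\theta|\le\pi\}$. The paper factors $-A(t)=4tH(t)$ and bounds $H(t)$ via $H(1)$ and $H'(t)$, whereas you reduce monotonically in $\cos\theta$ to the one-variable polynomial $h(t)=2t^2-\tfrac{\sqrt2}{2}(t+t^3)$ and check by calculus that $h$ is increasing on $[1,3/2]$ with $h(1)=2-\sqrt2>1/2$; this is slightly more direct but not materially different.
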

\begin{proof} We write $A(t) = -4t H(t)$ and note that 
	\[ H(1) = (\sin \tau)^2 - 2(1-\cos \tau)\cos \t = \tau^2\left( 1-\cos\t + O(\tau^2)\right). \]
	Also, 
	\[ H'(t) = \tau^2 \left(\tau^{-2}(\sin \tau)^2 - 2t\tau^{-2}(1-\cos \tau)\cos \t\right)  = \tau^2\left( 1-t\cos \t + O(\tau^2) \right). \]
	So for $t \in [1,3/2]$ we have that 
	\begin{align*}H(t) &\geq \tau^2( 1 - \cos(\t) + O(\tau^2)) + (t-1)\tau^2(1 - t\cos(\t) + O(\tau^2) ) \\
		&\geq \tau^2 (t - (1 - t^2 + t)\cos(\t) + O(\tau^2) ) \\
		&\geq \frac{\tau^2}{4}(1 + O(\tau^2))\,.
	\end{align*}
	where in the final inequality we used the easy bounds $t - (1 - t^2 + t)\cos(\t) \geq t - (1 - t^2 + t)(1/\sqrt{2})$ by positivity of $1 - t^2 + t$ for $t \in [1,3/2]$ and also $t - (1 - t^2 + t)(1/\sqrt{2}) \geq 1/4$.  Using that $-A(t) = 4tH(t)$ finishes the proof.
\end{proof}

\vspace{4mm}

We can now prove Lemma~\ref{lem:small-log-bound}.

\begin{proof}[Proof of Lemma~\ref{lem:small-log-bound}]
	Set $L(t) = L_{\t,\tau}$. Since $\l^{-1} \leq 1-\eps$, we have 
	\[ \int_{1-\eps}^1 L(t)t^{-(s+1)}\, dt \geq \int_{1-\eps}^1 L(t)\, dt \]
	and using the symmetry $L(t) = L(1/t)$ and the fact $1 + \eps \leq (1-\eps)^{-1} \leq \l$, we have
	\[  \int_1^{\frac{1}{1-\eps}}  L(t) \,dt \geq \int_1^{1+\eps} L(t) \, dt. \]
	Set $t = 1+x$ and note that for $x \in [0,\eps] $ we have $|B(x)| \leq  16(1+\eps)^4$.
	So for $x \in [0,\eps]$, we can use Observation~\ref{obs:bound-on-A} along with the fact that $|\t| \geq \pi/4$ to obtain
	\[ -\alpha(1+x) = \frac{-A(1+x)}{B(1+x)} \geq \frac{\tau^2}{2^8}(1 + o_{\tau \rightarrow 0}(1)).
	\] So, using that $\log(1+x) \geq x/2$ for $x\in [0,1]$, we have 
	\[ \int_{0}^{\eps}\log\left( 1 -\alpha(1+x)\right) \, dx 
	\geq \frac{1}{2}\int_{0}^{\eps} \min\{-\alpha(1+x),1\}\, dx 
	\geq \frac{\eps \tau^2}{2^9}(1 + o_{\tau \rightarrow 0}(1)),\]
	as desired.
\end{proof}

\section*{Acknowledgments} 
We would like to thank Tyler Helmuth, Rob Morris and Will Perkins for useful comments on a draft and background information.

\bibliographystyle{amsplain}


\begin{dajauthors}
\begin{authorinfo}[marcus]
  Marcus Michelen\\
  University of Illinois at Chicago\\
  Chicago, Illinois, United States of America\\
  michelen.math\imageat{}gmail\imagedot{}com \\
  \url{https://marcusmichelen.org/}
\end{authorinfo}
\begin{authorinfo}[julian]
  Julian Sahasrabudhe\\
  University of Cambridge \\
  Cambridge, United Kingdom \\
  jdrs2\imageat{}cam\imagedot{}ac\imagedot{}uk \\
  \url{https://www.dpmms.cam.ac.uk/~jdrs2}
\end{authorinfo}

\end{dajauthors}

\end{document}